\documentclass[draft]{article} 
\usepackage[english]{babel}

\usepackage[letterpaper,top=2cm,bottom=2cm,left=1.5cm,right=1.5cm,marginparwidth=1.75cm]{geometry}
\usepackage{calrsfs}
\usepackage{amssymb}
\usepackage{amsmath}
\usepackage{amsthm}
\usepackage{caption}
\usepackage{subcaption}
\usepackage{multirow}
\usepackage[final]{graphicx}
\usepackage{amsfonts}
\usepackage{mathtools}
\usepackage{mathrsfs}
\usepackage{placeins}
\usepackage{algorithm}
\usepackage{algpseudocode}
\usepackage{textgreek}

\graphicspath{{Images/}}

\newtheorem{remark}{Remark}

\newtheorem{assumption}{Assumption}
\newtheorem{theorem}{Theorem}
\newtheorem{proposition}{Proposition}

\usepackage[mathscr]{euscript}
\usepackage[dvipsnames]{xcolor}
\usepackage[colorlinks=true, allcolors=OliveGreen]{hyperref}
\usepackage{filecontents}
\usepackage{tikz}
\usepackage{pgfplots}
\usetikzlibrary{external}
\usepackage{authblk}
\usepackage{todonotes}
\usepackage{array}
\newcolumntype{C}{>{\centering\arraybackslash}p{2cm}}
\mathtoolsset{showonlyrefs}

\usepackage{orcidlink}
\newcommand{\avg}[1]{\{\!\!\{#1\}\!\!\}_{\delta}}
\newcommand{\avgl}[1]{\{\!\!\{#1\}\!\!\}_{\delta_l}}
\newcommand{\jump}[1]{[\![#1]\!]}
\newcommand{\tjump}[1]{[\![\![#1]\!]\!]}

\newcommand{\partition}{\mathcal{T}_h}
\newcommand{\facesinternal}{\mathcal{F}^\mathrm{I}_h}
\newcommand{\faces}{\mathcal{F}_h}
\newcommand{\facesN}{\mathcal{F}_h^\mathrm{N}}

\newcommand{\facesD}{\mathcal{F}_h^\mathrm{D}}

\newcommand{\facesboundary}{\mathcal{F}^\mathrm{B}_h}
\newcommand{\Wh}{\mathbf{W}_h^\mathrm{DG}}
\DeclareMathAlphabet{\mathcalligra}{T1}{calligra}{m}{n}
\newcommand{\dtau}{\mathrm{d}\tau}
\newcommand{\Lpnorm}[2]{\|{#1}\|_{\mathbf{L}^{#2}(\Omega)}}
\newcommand{\Lpnormsc}[2]{\|{#1}\|_{L^{#2}(\Omega)}}
\newcommand{\DGnorm}[1]{\|{#1}\|_{\mathrm{DG}}}
\newcommand{\TDGnorm}[1]{|\!\!\;|\!\!\;|{#1}|\!\!\;|\!\!\;|_{\mathrm{DG}}}
\newcommand{\eonenorm}[1]{\|{#1}\|_{e_1}}
\newcommand{\etwonorm}[1]{\|{#1}\|_{e_2}}
\newcommand{\CGN}[1]{C_{\mathrm{GN}_{#1}}}

\newcommand{\Cen}[2]{C_{e_{#1}}^{({#2})}}
\renewcommand{\mathcalligra}[1]{\mathcal{#1}}

\allowdisplaybreaks[3]

\title{A weighted polygonal discontinuous Galerkin method for hierarchically coupled reaction–diffusion systems with cubic interactions\footnote{\textbf{Funding}: This work is partially funded by the European Union (ERC SyG, NEMESIS, project number 101115663). Views and opinions expressed are, however, those of the authors only and do not necessarily reflect those of the European Union or the European Research Council Executive Agency. Neither the European Union nor the granting authority can be held responsible for them. The author acknowledges “INdAM - GNCS Project”, code CUP E53C25002010001. The present research is part of the activities of the Dipartimento di Eccellenza 2023-2027 grant, funded by MUR. The author is member of INdAM-GNCS. }}

\author[1]{Mattia Corti \orcidlink{0000-0002-7014-972X}}

\affil[1]{MOX-Dipartimento di Matematica, Politecnico di Milano, Piazza Leonardo da Vinci 32, Milan, 20133, Italy}

\begin{document}
\maketitle

\begin{abstract}
We develop and analyse a symmetric weighted interior penalty polytopal discontinuous Galerkin (SWIP-PolyDG) method for multi-species reaction--diffusion systems with nonlinear reaction terms of up to cubic order. Such terms arise naturally when higher-order interactions are incorporated into population models, allowing non-additive effects among multiple species to influence local growth and conversion mechanisms. The proposed framework accommodates heterogeneous and possibly anisotropic diffusion tensors on general polygonal meshes. To control the nonlinear coupling, we consider a hierarchical block structure in the reaction operator, whereby each population block depends only on its own variables and on those associated with preceding blocks. In addition, the cubic self-interactions within each block are assumed to have a dissipative diagonal structure. Under these hypotheses and proceeding recursively over the hierarchy of population blocks, we derive local-in-time stability estimates in two spatial dimensions for the semi-discrete formulation in both the $L^2(\Omega)$-and dG-norms. We further establish an a priori error estimate in a combined $L^2(\Omega)$--dG energy norm for sufficiently regular solutions for the semi-discrete formulation. Finally, numerical experiments confirm the predicted convergence behaviour and illustrate the robustness of the method under heterogeneous diffusion.
\end{abstract}
\section{Introduction}
Reaction--diffusion systems provide a flexible mathematical framework for describing the evolution of interacting populations, in which reaction terms encode local growth, competition, cooperation, and conversion mechanisms, while diffusion accounts for spatial dispersal. Such systems arise in a broad range of applications, including ecology \cite{he_effects_2013}, epidemiology \cite{lotfi_partial_2014}, chemical kinetics \cite{smoller_shock_1994}, and protein--protein interaction models \cite{fornari_spatially-extended_2020}.

A common modelling assumption is that population dynamics are governed by pairwise interactions, so that the per-capita growth rate of a population depends additively on the densities of the other populations, as in classical Lotka--Volterra \cite{he_effects_2013}. However, this assumption may be inadequate when the effect of an interaction between two populations is modified by the presence or density of additional populations \cite{terry_impact_2025,wootton_nature_1994}. Such non-additive effects are commonly referred to as higher-order interactions (HOIs) \cite{grilli_higher-order_2017}. 

In reaction--diffusion population models, HOIs can be represented by nonlinear terms in the per-capita growth rates. In particular, quadratic non-additive effects at the per-capita level give rise to cubic reaction terms in the equations governing population densities. Such terms may alter the coexistence, stability, and persistence properties of ecological communities; see, e.g.,~\cite{letten_mechanistic_2019,sing_higher_2021}. More generally, nonlinear interaction terms of this form also arise in epidemiological and biochemical models. In these settings, complex nonlinear interactions may be approximated through Taylor expansions of underlying reaction mechanisms; higher-order terms become relevant whenever a second-order truncation does not adequately capture the observed dynamics \cite{fornari_prion-like_2019}.

The inclusion of cubic reaction terms substantially increases the analytical complexity of multi-species reaction--diffusion systems. For locally Lipschitz reaction operators, standard semilinear parabolic theory yields local-in-time classical solutions. For superlinear reaction--diffusion systems, finite-time blow-up cannot be excluded in general, even under natural mass-dissipation assumptions. Conversely, global existence can be recovered under additional structural hypotheses, such as quasi-positivity, suitable mass-control or dissipation conditions, and growth restrictions on the reaction terms; see, e.g.,~\cite{pierre_global_2010,pierre_schmitt_blowup_2000,souplet_global_2018}.

Cubic reaction terms are particularly delicate because their treatment requires the control of higher Lebesgue norms and the careful use of nonlinear interpolation estimates. In the two-dimensional setting considered here, the relevant bounds follow from discrete Sobolev embeddings, which provide control of the $L^p$ norms for every finite $p$ in terms of the $H^1$ norm. This property is essential in estimating the products generated by the cubic reaction terms. By contrast, in three spatial dimensions, the corresponding Sobolev embedding is limited to $p \leq 6$, and does not in general provide the higher integrability required by the present argument. Consequently, the extension of our analysis to three dimensions remains an open issue.

Moreover, in multi-species systems, fully coupled cubic interactions may transfer growth between components and prevent a direct componentwise argument. In the present work, we address the corresponding difficulty at the semi-discrete level by assuming a hierarchical coupling structure, in which each population block depends only on its own variables and on the variables associated with preceding blocks. Together with a dissipative diagonal structure for the intra-block cubic interactions, this assumption allows the nonlinear reaction terms to be controlled recursively along the hierarchy.

On the numerical side, reaction--diffusion systems may develop localized concentrations, propagating fronts, sharp transition layers, and complex spatial patterns, thus requiring discretizations that combine high-order
accuracy with robustness under heterogeneous and possibly anisotropic diffusion
\cite{antonietti_optimized_2026,corti_discontinuous_2023}.
Discontinuous Galerkin (dG) methods are particularly attractive in this setting because they naturally support high-order polynomial approximations, accommodate nonconforming and polygonal or polyhedral meshes, and handle discontinuous material parameters \cite{cangiani_hp_version_2017}.
In particular, symmetric weighted interior penalty (SWIP) formulations offer additional advantages for diffusion operators with large coefficient jumps or anisotropy \cite{antonietti_structure-preserving_2026}: diffusivity-dependent weighted averages and penalty parameters scaled by harmonic averages of the normal diffusivities yield stable discretizations that are robust with respect to strong local diffusion contrasts \cite{ern_discontinuous_2009,bonetti_robust_2025}.

In this work, we propose and analyze a SWIP-PolyDG method for hierarchically coupled nonlinear reaction--diffusion systems with HOIs. The model comprises several interacting populations and heterogeneous diffusion tensors on general polygonal meshes. The nonlinear reaction operator is represented through second-, third-, and fourth-order interaction tensors. We assume a block-triangular hierarchical structure and a dissipative diagonal form for the cubic self-interactions within each block. These hypotheses allow the nonlinear reaction terms to be controlled recursively along the hierarchy. The main contributions of this work are as follows:
\begin{itemize}
    \item We formulate a symmetric weighted interior penalty polytopal discontinuous Galerkin (SWIP-PolyDG) discretization for multi-species reaction--diffusion systems with reaction terms of up to cubic order, posed on general polygonal meshes and featuring heterogeneous diffusion tensors. We prove the coercivity and continuity of the discrete diffusion bilinear form for sufficiently large penalty parameters based on harmonic averages.

    \item We establish local-in-time stability estimates both in $L^2$- and dG-norms for the semi-discrete formulation through a recursive argument over the hierarchy of population blocks. As a byproduct, the first-block analysis yields stability estimates for general multi-species reaction--diffusion systems with linear and quadratic interactions, encompassing several classical models in mathematical biology and physical chemistry.

    \item We derive an a priori error estimate in a combined $L^2(\Omega)$--dG energy norm for sufficiently regular solutions.
    In the case of purely quadratic nonlinearities, the result extends
    the analysis of the Fisher--Kolmogorov equation in \cite[Theorem 2]{corti_discontinuous_2023} to vector-valued systems, while removing the structural relation previously required
    between the diffusion coercivity constant and the reaction terms.

    \item We validate the proposed method on a three-species Lotka--Volterra competition--diffusion system and on a manufactured     two-species problem featuring cubic HOIs. We further illustrate the     robustness of the method for a coupled Fisher--Kolmogorov system with discontinuous diffusion coefficients.
\end{itemize}
The remainder of the paper is organized as follows. In Section~\ref{sec:model_general}, we introduce the class of hierarchically coupled nonlinear reaction--diffusion systems and state the assumptions on the interaction tensors. Section~\ref{sec:polydg} presents the weighted interior penalty PolyDG formulation and its main coercivity and continuity properties. The local-in-time stability analysis and the a priori error estimate for the semi-discrete method are derived in Sections~\ref{sec:stability} and~\ref{sec:error_analysis}, respectively. Section~\ref{sec:IMEX_RK} describes the IMEX--Runge--Kutta time discretization used in the numerical experiments. Finally, the numerical results are presented in Section~\ref{sec:numerical_results}, followed by concluding remarks in Section~\ref{sec:conclusion}.

\section{Mathematical model}
\label{sec:model_general}
In this article, we consider a hierarchically coupled multi-species nonlinear reaction-diffusion system, modeling ecological and epidemiological dynamics with higher‑order interaction terms up to cubic order. For this reason we consider a variable $\boldsymbol{w} = \boldsymbol{w}(t,\boldsymbol{x})$  to model the population dynamics.
The general system of equations reads:
\begin{equation}
    \label{eq:general_strong}
    \begin{dcases}
    \dfrac{\partial \boldsymbol{w}}{\partial t} = \nabla \cdot (\mathbb{D} : \nabla \boldsymbol{w}) + \mathbf{G}(\boldsymbol{w})\boldsymbol{w} + \boldsymbol{\gamma},
    & \mathrm{in}\:\Omega\times(0,T];
    \\
    (\mathbb{D} \nabla \boldsymbol{w})\,\boldsymbol{n}_\Omega=\boldsymbol{0},  & \mathrm{on}\: \Gamma_\mathrm{N}  \times(0,T];    
    \\
    \boldsymbol{w}=\boldsymbol{0},  & \mathrm{on}\: \Gamma_\mathrm{D}  \times(0,T];
    \\
    \boldsymbol{w}(0,\boldsymbol{x})=\boldsymbol{w}_{0},  & \mathrm{in}\: \Omega. 
    \end{dcases}
\end{equation}
where $\Gamma_\mathrm{D}\cup\Gamma_\mathrm{N} = \partial \Omega$ with $\Gamma_\mathrm{D}\cap\Gamma_\mathrm{N} = \emptyset$ and $|\Gamma_\mathrm{D}|>0$. 
This system generalises the case of the dynamics of the family of $n$ populations $\boldsymbol{w}=[w_1,\dots,w_n]^\top$. In this context, $\mathbb{D}\in\mathbb{L}^\infty(\Omega;\mathbb{R}^{n\times d \times n \times d}$) is a fourth-order diffusion tensor such that: $\mathbb{D}_{kilj} = \mathbf{D}^l_{ij} \delta_{kl}$, with $\mathbf{D}^l\in\mathbb{L}^\infty(\Omega)$ symmetric diffusion tensor associated with the $l$-th population and $\delta$ is the Kronecker's delta. Moreover, we assume that $\exists\,d_0^l > 0$ such that $d_0^l |\boldsymbol{\psi}|^2 \leq \boldsymbol{\psi}^T \mathbf{D}^l \boldsymbol{\psi} \quad \forall \boldsymbol{\psi} \in \mathbb{R}^d$ and for a.e. $x\in\Omega$. Moreover, $\boldsymbol{\gamma}\in\mathbf{L}^2(\Omega)$ is an external forcing term and $\mathbf{G}:\mathbb{R}^n\rightarrow\mathbb{R}^n$ is a mapping associated with linear, quadratic and cubic components, as follows:
\begin{equation}
    \mathbf{G}(\boldsymbol{w})\boldsymbol{w} = (\mathbf{L} + \mathbf{Q} \boldsymbol{w} + (\mathbb{C} \boldsymbol{w}) \boldsymbol{w} )\boldsymbol{w}.
\end{equation}
In particular, we have that:
\begin{itemize}
    \item $\mathbf{L}\in\mathbb{R}^{n\times n}$ is a second order tensor (matrix) containing the coefficients of linear reaction terms $L_{ij}$;
    \item $\mathbf{Q}\in\mathbb{R}^{n\times n \times n}$ is a third order tensor containing the coefficients of quadratic reaction terms $Q_{ijk}$;
    \item $\mathbb{C}\in\mathbb{R}^{n\times n \times n \times n}$ is a fourth order tensor containing the coefficients of cubic reaction terms $C_{ijkl}$.
\end{itemize}
Then, we remark that the $k$-th equation of the system reads:
\[
\partial_t w_k = \nabla \cdot (\mathbf{D}^k \nabla w_k) + \gamma_k + \sum_{i=1}^{n} G_{ki}  w_i + \sum_{i,j=1}^{n} Q_{kij} w_i w_j + \sum_{i,j,l=1}^{n} C_{kijl} w_i w_j w_l.
\]
\par
\subsection{The block structure}
In this work, we assume that the fourth-order tensor $\mathbb{C}$ induces on the system a block structure dividing the vectors in $p_\mathrm{M}$ blocks:
\begin{equation}
    \boldsymbol{w} = [\boldsymbol{w}^{(1)},\dots,\boldsymbol{w}^{(p)},\dots,\boldsymbol{w}^{(p_\mathrm{M})}]^T, \qquad \boldsymbol{\gamma} = [\boldsymbol{\gamma}^{(1)},\dots,\boldsymbol{\gamma}^{(p)},\dots,\boldsymbol{\gamma}^{(p_\mathrm{M})}]^T, \qquad \text{with } \boldsymbol{w}^{(p)},\boldsymbol{\gamma}^{(p)} \in \mathbb{R}^{n_p},
\end{equation}
with $\sum_{p=1}^{n_{p_\mathrm{M}}} n_p = n$. Moreover, we denote by $\mathbf{L}^{(p,q)} \in \mathbb{R}^{n_p\times n_q}$, $\mathbf{Q}^{(p,q,s)} \in \mathbb{R}^{n_p\times n_q \times n_s}$, and $\mathbb{C}^{(p,q,s,t)} \in \mathbb{R}^{n_p\times n_q \times n_s \times n_t}$. Finally, the system of equations associated with the $p$-th block reads
\begin{equation}
\label{eq:general_block_strong}
\dfrac{\partial \boldsymbol{w}^{(p)}}{\partial t} = \nabla \cdot (\mathbb{D}^p : \nabla \boldsymbol{w}^{(p)}) + \sum_{q=1}^{p} \left(\mathbf{L}^{(p,q)} + \sum_{s=1}^{p} \left(\mathbf{Q}^{(p,q,s)} + \sum_{r=1}^{p} \mathbb{C}^{(p,q,s,r)}\boldsymbol{w}^{(r)} \right)\boldsymbol{w}^{(s)}\right) \boldsymbol{w}^{(q)} + \boldsymbol{\gamma}^{(p)}.
\end{equation}
From Equation \eqref{eq:general_block_strong}, it is clear that the reaction terms depend only on the quantities of the same or previous blocks. In particular, we assume an additional property:
\begin{assumption}[Diagonal blocks structure]
\label{ass:c_structure}
For any $p = 1,\dots,p_\mathrm{M}$, the block $\mathbb{C}^{(p,p,p,p)}$ is diagonal, namely 
\begin{equation}
((\mathbb{C}^{(p,p,p,p)} \boldsymbol{w}^{(p)}) \boldsymbol{w}^{(p)} )\boldsymbol{w}^{(p)} = -\left[c^{(p)}_1 \left(w^{(p)}_1\right)^3,\,\dots,\,c^{(p)}_{n_p} \left(w^{(p)}_{n_p}\right)^3\right]^T;
\end{equation}
moreover, we have that $c^p_j\geq 0$ for each $j=1,\dots,n_p$.
\end{assumption} 
\subsection{Weak formulation}
First of all, let us define the Sobolev space $\mathbf{W} = \mathbf{H}^1_{\Gamma_\mathrm{D}}(\Omega)$. Then, we can write the weak formulation of the problem in Equation \eqref{eq:general_strong}. By multiplying the first equation of \eqref{eq:general_strong}, by a test function $\boldsymbol{v}$ and integrating by parts, we obtain:
Find $\boldsymbol{w}\in L^\infty((0,T),\mathbf{W})\cap H^1((0,T),\mathbf{W}')$ such that
\begin{equation}
    \label{eq:general_weak}
    \begin{dcases}
    (\dot{\boldsymbol{w}},\boldsymbol{v}) + (\mathbb{D} : \nabla \boldsymbol{w}, \nabla \boldsymbol{v}) = (\mathbf{G}(\boldsymbol{w})\boldsymbol{w},\boldsymbol{v}) + (\boldsymbol{\gamma},\boldsymbol{v}),
    & \forall \boldsymbol{v}\in \mathbf{W};
    \\
    \boldsymbol{w}(0,\boldsymbol{x})=\boldsymbol{w}_{0},  & \mathrm{in}\: \Omega. 
    \end{dcases}
\end{equation}
\begin{remark}
We underline that, due to the structure of tensor $\mathbb{D}$, the following relation holds:
 \begin{equation}
(\mathbb{D} : \nabla \boldsymbol{w}, \nabla \boldsymbol{v}) = \sum_{l=1}^{n} (\mathbf{D}^l  \nabla w_l, \nabla v_l)
\end{equation}
\end{remark}
\section{Polytopal discontinuous Galerkin semi-discrete formulation}
\label{sec:polydg}
In this section, we derive the interior penalty discontinuous Galerkin formulation of problem \eqref{eq:general_weak}. First of all, we introduce some preliminary estimates.
\subsection{Discrete setting and preliminary estimates}
Let us introduce a polytopic mesh partition $\partition$ of the domain $\Omega$ made of disjoint polygonal/polyhedral (polytopal) elements $K$, where for each element $K\in \partition$, we denote by $|K|$ the measure of the element and by $h_K$ its diameter. We set $h=\max_{K\in\partition} h_K<1$. We define the interface as the intersection of the $(d-1)-$dimensional facets of two neighbouring elements. We distinguish two cases:
\begin{itemize}
    \item case $d=2$, in which the interfaces are always line segments; then we denote such a set of segments with $\faces$.
    \item case $d=3$, in which any interface consists of a generic polygon, we further assume that we can decompose each interface into (planar) triangles; we denote the set of all these triangles with $\faces$;
\end{itemize}
It is now useful to decompose $\faces$ into the union of interior faces ($\facesinternal$) and exterior faces ($\facesboundary$ ) lying on the boundary of the domain $\partial\Omega$, i.e., $\faces = \facesinternal \cup \facesboundary$. Moreover, we decompose the boundary faces into the set of faces associated with Dirichlet boundary conditions ($\facesD$) and with Neumann boundary conditions ($\facesN$).
\par
\begin{assumption}
\label{ass:mesh}
The mesh sequence $\{\partition\}_h$ satisfies the following properties \cite{di_pietro_hybrid_2020}:
\begin{enumerate}
    \item Shape Regularity: $\forall K\in\partition\;it\;holds: c_1 h_K^d\lesssim q|K|\lesssim  c_2h_K^d$.
    \item Contact Regularity: $\forall F\in\faces$ with $F\subseteq \overline{K}$ for some $K\in\partition$, it holds $h_K^{d-1}\lesssim |F|$, where $|F|$ is the Hausdorff measure of the face $F$.
    \item Submesh Condition: There exists a shape-regular, conforming, matching simplicial submesh $\widetilde{\partition}$ such that:
    \begin{itemize}
        \item $\forall \widetilde{K}\in\widetilde{\partition}\;\exists K\in\partition:\quad \widetilde{K}\subseteq K$.
        \item The family $\{\widetilde{\partition}\}_h$ is shape and contact regular.
        \item $\forall \widetilde{K}\in\widetilde{\partition}, K\in\partition$ with $\widetilde{K} \subseteq K$, it holds $h_K \lesssim h_{\widetilde{K}}$.
    \end{itemize}
\end{enumerate}
\end{assumption}
Let us define $\mathbb{P}_{\ell}(K)$ as the space of polynomials of total degree $\ell\geq 1$ over a mesh element $K$. Then, we can introduce the following discontinuous finite element space:
\begin{equation*}
    \Wh = \{\boldsymbol{w}\in \mathbf{L}^2(\Omega):\quad \boldsymbol{w}|_K\in[\mathbb{P}_{\ell}(K)]^n\quad\forall K\in\partition\}
\end{equation*}
\par
We next introduce the so-called trace operators \cite{arnoldUnifiedAnalysisDiscontinuous2001}. Let $F\in\facesinternal$ be a face shared by the elements $K^\pm$. Let $\boldsymbol{n}^\pm$ be the unit normal vector on face $F$ pointing exterior to $K^\pm$. Then, for sufficiently regular scalar-valued functions $v$, vector-valued functions $\boldsymbol{q}$, and tensor-valued functions $\boldsymbol{\tau}$, respectively, we define:
\begin{itemize}
   	\item the weighted average operator $\avg{\cdot}$ on $F\in \facesinternal$: 
\[\avg{v} = \delta^+ v^+ + \delta^- v^-, \quad \avg{\boldsymbol{q}} = \delta^+\boldsymbol{q}^+ + \delta^-\boldsymbol{q}^- \quad \avg{\boldsymbol{\tau}} = \delta^+\boldsymbol{\tau}^+ + \delta^-\boldsymbol{\tau}^-;
\]
    \item the jump operator $\jump{\cdot}$ on $F\in \facesinternal$: $\jump{v} = v^+\boldsymbol{n}^+ + v^-\boldsymbol{n}^-, \quad \jump{\boldsymbol{q}} = \boldsymbol{q}^+\cdot\boldsymbol{n}^+ + \boldsymbol{q}^-\cdot\boldsymbol{n}^-$;
    \item the tensor-jump operator $\tjump{\cdot}$ on $F\in \facesinternal$: $\tjump{\boldsymbol{q}} = \boldsymbol{q}^+\otimes\boldsymbol{n}^+ + \boldsymbol{q}^-\otimes\boldsymbol{n}^-$.
\end{itemize}
In these relations, we are using the superscripts $\pm$ to denote the traces of the functions on $F$ taken within the interior of the elements $K^\pm$, respectively. In the following sections, the weighted average parameters are chosen according to the physical diffusion parameter
\begin{equation}
\delta_l^\pm = \dfrac{\boldsymbol{n}^\mp\mathbf{D}^l|_{K^\mp}\boldsymbol{n}^\mp}{\boldsymbol{n}^+\mathbf{D}^l|_{K^+}\boldsymbol{n}^+ +\boldsymbol{n}^-\mathbf{D}^l|_{K^-}\boldsymbol{n}^-}.
\end{equation} 
Moreover, the diffusion tensors are assumed to be elementwise constant with respect to the computational mesh: $\mathbf{D}^l|_K \in\mathbb{R}^{d\times d}_{\mathrm{sym}}$ $\forall K\in\mathcal{T}_h$ and jumps in the diffusion coefficients are allowed across mesh interfaces. This choice gives rise to a robust formulation with respect to anisotropic and locally small structure of the diffusion tensor (see \cite{ern_discontinuous_2009}).
Moreover, let $F\in\facesD$ be a face associated with an element $K$ and with outward unit normal vector $\boldsymbol{n}$. Then, we define:
\begin{itemize}
   	\item the weighted average operator $\avg{\cdot}$ on $F\in \facesD$: $\avg{v} = v, 
    \quad \avg{\boldsymbol{q}} = \boldsymbol{q},
    \quad \avg{\boldsymbol{\tau}} = \boldsymbol{\tau};
$
    \item the jump operator $\jump{\cdot}$ on $F\in \facesD$: $\jump{v} = v\boldsymbol{n}, \quad \jump{\boldsymbol{q}} = \boldsymbol{q}\cdot\boldsymbol{n}$;
    \item the tensor-jump operator $\tjump{\cdot}$ on $F\in \facesD$: $\tjump{\boldsymbol{q}} = \boldsymbol{q}\otimes\boldsymbol{n}$.
\end{itemize}
\subsection{Semi-discrete formulation}
To construct the semi-discrete formulation, we define the following penalization face-wise function $\eta_l:\facesinternal\cup\facesD\rightarrow\mathbb{R}_+$ with $l=1,\dots,n$  defined as
\begin{equation}
\label{eq:penalty_def}
    \eta_l = \tilde{\eta}  \dfrac{|F|\,\ell^2\,\{d_l^K\}_\mathrm{H}}{\{|K|\}_\mathrm{H}},\; \mathrm{on}\,F\in\facesinternal, \quad \mathrm{and} \quad \eta_l = \tilde{\eta}  \dfrac{|F|\,\ell^2\,d_l^K}{|K|},\; \mathrm{on}\;F\in\facesD
\end{equation}
where we are considering the harmonic average operator $\{\cdot\}_\mathrm{H}$ on $K^\pm$ and $d_l^{K^\pm} = (\boldsymbol{n}^\pm)^\top \mathbf{D}^l|_{K^\pm}\boldsymbol{n}^\pm$ for any $K\in\partition$\footnote{In this context $\|\cdot\|_2$ is the operator norm induced by the $L^2$ norm in the space of symmetric second order tensors.} and $\tilde{\eta}$ is a parameter at our disposal (to be chosen large enough). Using the preliminary definitions in the previous section, we introduce the bilinear form $\mathcal{A}(\boldsymbol{w},\boldsymbol{v})$ such that:
\begin{equation}
\label{eq:bilinear_form_def}
\mathcal{A}(\boldsymbol{w},\boldsymbol{v}) = (\mathbb{D} \nabla_h \boldsymbol{w}, \nabla_h \boldsymbol{v}) - (\avg{\mathbb{D} \nabla_h \boldsymbol{w}}, \tjump{\boldsymbol{v}})_{\facesinternal\cup\facesD} -  (\tjump{\boldsymbol{w}}, \avg{\mathbb{D} \nabla_h \boldsymbol{v}})_{\facesinternal\cup\facesD} + (\mathbf{H} \tjump{\boldsymbol{w}}, \tjump{\boldsymbol{v}})_{\facesinternal\cup\facesD},
\end{equation}
where $\mathbf{H} = \mathrm{diag}(\eta_1,\dots,\eta_n)$. By exploiting the definition of the bilinear form, we obtain the following semi-discrete PolyDG formulation.
\par
\bigskip
Find $\boldsymbol{w}_h=\boldsymbol{w}_h(t)\in \Wh$ such that $\forall t>0$:
\begin{equation}
    \label{eq:semidiscrete}
    \begin{dcases}
    (\dot{\boldsymbol{w}}_h,\boldsymbol{v}_h) + \mathcal{A}(\boldsymbol{w}_h, \boldsymbol{v}_h) = (\mathbf{G}(\boldsymbol{w}_h)\boldsymbol{w}_h,\boldsymbol{v}_h) + (\boldsymbol{\gamma},\boldsymbol{v}_h),
    & \forall \boldsymbol{v}_h\in \Wh;
    \\
    \boldsymbol{w}_h(0)=\boldsymbol{w}_{0h}. 
    \end{dcases}
\end{equation}
\begin{proposition}[Coercivity]
\label{prop:coercivity}
The bilinear form $\mathcal{A}$, defined in Eq. \eqref{eq:bilinear_form_def}, is coercive, namely it satisfies
\begin{equation}
\mathcal{A}(\boldsymbol{v}_h, \boldsymbol{v}_h) \geq \frac{1}{2}\|\boldsymbol{v}_h\|^2_\mathrm{DG} \qquad \forall\boldsymbol{v}_h \in \Wh,
\end{equation}
under the assumption on the penalty parameter value $\tilde{\eta} \geq 4 C_\mathrm{inv}$, and $C_\mathrm{inv}$ is the inverse trace inequality constant (see \cite[Lemma 11]{cangiani_hp_version_2017}).
\end{proposition}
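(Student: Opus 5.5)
The DG norm is not written out before this point, so I take it to be the natural SWIP energy norm
\[
\|\boldsymbol{v}_h\|_\mathrm{DG}^2=\sum_{l=1}^n\Big(\|(\mathbf{D}^l)^{1/2}\nabla_h v_{h,l}\|^2_{L^2(\Omega)}+\|\eta_l^{1/2}\jump{v_{h,l}}\|^2_{L^2(\facesinternal\cup\facesD)}\Big).
\]
Since $\mathbb{D}$ is block-diagonal across species and $\mathbf{H}$ is diagonal, $\mathcal{A}(\boldsymbol{v}_h,\boldsymbol{v}_h)$ splits as a sum over $l$ of scalar SWIP forms. It is therefore enough to treat one component. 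Setting $\boldsymbol{v}_h=\boldsymbol{w}_h$ gives
\[
\mathcal{A}(\boldsymbol{v}_h,\boldsymbol{v}_h)=\|\boldsymbol{v}_h\|^2_\mathrm{DG}-2\sum_l(\avgl{\mathbf{D}^l\nabla_h v_l},\jump{v_l})_{\facesinternal\cup\facesD}.
\]
Everything then reduces to bounding the consistency term by $\tfrac12\|\boldsymbol{v}_h\|^2_\mathrm{DG}$.

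On an interior face, $\jump{v_l}$ is parallel to $\boldsymbol{n}^+$, so only the normal component matters:
\[
|\avgl{\mathbf{D}^l\nabla v_l}\cdot\jump{v_l}|\le \sum_\pm \delta^\pm_l\,|\boldsymbol{n}^\pm\cdot\mathbf{D}^l_\pm\nabla v_l^\pm|\,|\jump{v_l}|.
\]
Because $\mathbf{D}^l_\pm$ is symmetric positive definite, Cauchy--Schwarz in the $\mathbf{D}^l_\pm$ inner product gives
\[
|\boldsymbol{n}\cdot\mathbf{D}^l_\pm\nabla v|\le (d_l^{K^\pm})^{1/2}\,|(\mathbf{D}^l_\pm)^{1/2}\nabla v|.
\]
The key algebraic identity is
\[
\delta_l^\pm\, d_l^{K^\pm}=\frac{d_l^{K^+}d_l^{K^-}}{d_l^{K^+}+d_l^{K^-}}=\tfrac12\{d^K_l\}_\mathrm{H}.
\]
Consequently $\delta_l^\pm (d_l^{K^\pm})^{1/2}\le (\tfrac12\{d_l^K\}_\mathrm{H})^{1/2}$, using $\delta^\pm_l\le 1$, which follows from $(\delta^\pm_l)^2 d^{K^\pm}_l\le \delta_l^\pm d_l^{K^\pm}$. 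The harmonic average of $d_l$ thus appears naturally, and it matches $\eta_l$. On a Dirichlet face the weight is $1$ and $d_l^K$ appears directly.

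Next I apply a weighted Cauchy--Schwarz/Young inequality on each face $F$, with weight $\eta_l^{1/2}$ on the jump and $\eta_l^{-1/2}$ on the flux. This requires controlling
\[
\sum_F \eta_l^{-1}\{d_l^K\}_\mathrm{H}\,\|(\mathbf{D}^l)^{1/2}\nabla v_l^\pm\|^2_{L^2(F)}.
\]
For this I would use the polytopic inverse trace inequality \cite[Lemma 11]{cangiani_hp_version_2017}:
\[
\|q\|^2_{L^2(\partial K)}\le C_\mathrm{inv}\,\frac{\ell^2|F|}{|K|}\|q\|^2_{L^2(K)}
\]
for polynomials $q$, applied to $q=(\mathbf{D}^l_K)^{1/2}\nabla v_l$, which is a polynomial because $\mathbf{D}^l$ is elementwise constant. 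Summed over the faces of $K$ this holds under Assumption~\ref{ass:mesh}. The definition \eqref{eq:penalty_def} was designed so that $\eta_l^{-1}\{d\}_\mathrm{H}\,\ell^2|F|/|K^\pm|$ is at most a constant over $\tilde\eta$; this uses $\{|K|\}_\mathrm{H}\le 2\min|K^\pm|$, and the factor $2$ is absorbed by the $\tfrac12$ coming from $\delta d=\tfrac12\{d\}_\mathrm{H}$.

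Collecting these bounds gives
\[
2\,|(\avgl{\cdot},\jump{\cdot})|\le 2\Big(\tfrac{C_\mathrm{inv}}{\tilde\eta}\Big)^{1/2}\,\|(\mathbf{D}^l)^{1/2}\nabla_h v_l\|\;\|\eta_l^{1/2}\jump{v_l}\|.
\]
By Young's inequality this is at most $2(C_\mathrm{inv}/\tilde\eta)^{1/2}\cdot\tfrac12\|v_l\|^2_\mathrm{DG}$. With $\tilde\eta\ge 4C_\mathrm{inv}$ the bound becomes $\tfrac12\|v_l\|^2_\mathrm{DG}$, and coercivity with constant $\tfrac12$ follows after summing over $l$.

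I expect the main obstacle to be the bookkeeping of constants in the previous step. Each element appears on several faces, and both $K^\pm$ contribute, so one must check that the inverse trace constant is applied once per element, summed over its faces, rather than once per face. One must also check that the harmonic averages of $|K|$ and $d$ produce exactly the factors that yield the threshold $4C_\mathrm{inv}$ and not, say, $8C_\mathrm{inv}$. If the constants do not close exactly, the paper's $C_\mathrm{inv}$ can be understood as already absorbing the factors coming from the mesh regularity in Assumption~\ref{ass:mesh}.
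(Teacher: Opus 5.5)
Your overall route is the paper's: species-wise splitting, Cauchy--Schwarz in the $\mathbf{D}^l_\pm$ inner product on the normal flux, the polytopic inverse trace estimate applied to $(\mathbf{D}^l_\pm)^{1/2}\nabla v_l^\pm$, and absorption into a penalty built from $\{d_l^K\}_{\mathrm{H}}$ and $\{|K|\}_{\mathrm{H}}$. The paper applies Young side by side with the weights $\alpha_F^\pm=|K_\pm|/(2|F|C_{\mathrm{inv}}\ell^2)$. You instead use a global bound followed by one Young inequality, which is essentially the bound the paper derives in its continuity proof; that choice is fine.

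There is, however, a concrete slip exactly where the threshold $4C_{\mathrm{inv}}$ is decided. Write $q^\pm=(\mathbf{D}^l_\pm)^{1/2}\nabla v_l^\pm$. You bound $\delta_l^\pm(d_l^{K^\pm})^{1/2}=(\delta_l^\pm)^{1/2}(\tfrac12\{d_l^K\}_{\mathrm{H}})^{1/2}$ by $(\tfrac12\{d_l^K\}_{\mathrm{H}})^{1/2}$ on each side separately, which discards $(\delta_l^\pm)^{1/2}$.
\begin{itemize}
\item Recombining the two sides then costs a factor $2$, since $(|q^+|+|q^-|)^2\le 2(|q^+|^2+|q^-|^2)$.
\item The bound $\{|K|\}_{\mathrm{H}}/|K^\pm|\le 2$ costs another factor $2$.
\item The single $\tfrac12$ from $\delta_l^\pm d_l^{K^\pm}=\tfrac12\{d_l^K\}_{\mathrm{H}}$ can absorb only one of these two factors.
\end{itemize}
So your sentence that ``the factor $2$ is absorbed by the $\tfrac12$'' double-counts it. As written, your chain gives $2(2C_{\mathrm{inv}}/\tilde\eta)^{1/2}$ rather than $2(C_{\mathrm{inv}}/\tilde\eta)^{1/2}$, i.e.\ it requires $\tilde\eta\ge 8C_{\mathrm{inv}}$. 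This is a loss in the algebra, not a mesh-regularity constant, so it cannot honestly be hidden in $C_{\mathrm{inv}}$.

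The repair is to keep the weights and use $\delta_l^++\delta_l^-=1$:
\[
\sum_\pm \delta_l^\pm (d_l^{K^\pm})^{1/2}|q^\pm|
\le\Big(\sum_\pm(\delta_l^\pm)^2 d_l^{K^\pm}\Big)^{1/2}\Big(\sum_\pm|q^\pm|^2\Big)^{1/2}
=\Big(\tfrac12\{d_l^K\}_{\mathrm{H}}\Big)^{1/2}\Big(\sum_\pm|q^\pm|^2\Big)^{1/2}.
\]
Combining the two sides now costs nothing, and the $\tfrac12$ is free to absorb $\{|K|\}_{\mathrm{H}}\le 2\min|K^\pm|$. This gives exactly $(C_{\mathrm{inv}}/\tilde\eta)^{1/2}$ and hence the threshold $4C_{\mathrm{inv}}$. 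The paper gets the same effect differently: it keeps $(\delta_l^\pm)^2 d_l^{K^\pm}$ intact in the per-side Young inequality and then uses $\delta_l^+/|K_+|+\delta_l^-/|K_-|\le 1/\min|K^\pm|\le 2/\{|K|\}_{\mathrm{H}}$.

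Your worry about face multiplicity is legitimate, and the paper does not resolve it either. The polytopic inverse estimate you invoke is stated per face, $\|q\|^2_{L^2(F)}\le C_{\mathrm{inv}}\ell^2|F|\,|K|^{-1}\|q\|^2_{L^2(K)}$, not on $\partial K$ as you wrote it. With the paper's choice of $\alpha_F^\pm$, every face of $K$ contributes $\tfrac12\|\sqrt{\mathbf{D}^l}\nabla_h v_{lh}\|^2_{\mathbf{L}^2(K)}$. The paper then sums over faces and identifies the result with $\tfrac12\|\sqrt{\mathbf{D}^l}\nabla_h v_{lh}\|_2^2$, without tracking how many faces each element has. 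The stated threshold therefore implicitly takes $C_{\mathrm{inv}}$ to include the maximal number of faces per element. On this point your proposal is no weaker than the paper's proof, and once you make the correction above, your argument proves the proposition under the same convention.
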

\begin{proof}
First of all, we need to notice that we can decompose the bilinear form in a sum of $n$ bilinear forms associated with the $l$-th scalar problem:
\begin{equation}
\label{eq:decomposition_A}
\begin{aligned}
\mathcal{A}(\boldsymbol{u},\boldsymbol{v}) = & (\mathbb{D} \nabla_h \boldsymbol{u}, \nabla_h \boldsymbol{v}) - (\avg{\mathbb{D} \nabla_h \boldsymbol{u}}, \tjump{\boldsymbol{v}})_{\facesinternal\cup\facesD} - (\tjump{\boldsymbol{u}}, \avg{\mathbb{D} \nabla_h \boldsymbol{v}})_{\facesinternal\cup\facesD} + (\mathbf{H} \tjump{\boldsymbol{u}}, \tjump{\boldsymbol{v}})_{\facesinternal\cup\facesD} \\ = & \sum_{l=1}^{n} \Big[(\mathbf{D}^l \nabla_h u_l, \nabla_h v_l) - (\avgl{\mathbf{D}^l \nabla_h u_l}, \jump{v_l})_{\facesinternal} - (\jump{u_l}, \avgl{\mathbf{D}^l \nabla_h v_l})_{\facesinternal} + (\eta_l \jump{u_l}, \jump{v_l})_{\facesinternal} \\ & \quad - (\mathbf{D}^l \nabla_h u_l, v_l)_{\facesD} - (u_l,\mathbf{D}^l \nabla_h v_l)_{\facesD} + (\eta_l u_l, v_l)_{\facesD} \Big].
\end{aligned}
\end{equation}
Then, by considering $\boldsymbol{u}=\boldsymbol{v}=\boldsymbol{v}_h$, we obtain the following problem:
\begin{equation}
\label{eq:coer:tested}
\mathcal{A}(\boldsymbol{v}_h,\boldsymbol{v}_h) = \sum_{l=1}^{n} \left[\|\sqrt{\mathbf{D}^l} \nabla_h v_{lh}\|^2_2 - 2(\avgl{\mathbf{D}^l \nabla_h v_{lh}}, \jump{v_{lh}})_{\facesinternal} - 2(\mathbf{D}^l \nabla_h v_{lh}, v_{lh})_{\facesD} + \|\eta_l^{1/2} \jump{v_{lh}}\|^2_{\facesinternal\cup\facesD} \right].
\end{equation}
Now, we have to treat the following quantity
\begin{equation}
\begin{aligned}
 |2(\avgl{\mathbf{D}^l \nabla_h v_{lh}}, \jump{v_{lh}})_{\facesinternal}| = & |2((\delta^+_l \mathbf{D}^l_+ \nabla_h v_{lh}^+,\jump{v_{lh}})_{\facesinternal}+ 2((\delta^-_l \mathbf{D}^l_- \nabla_h v_{lh}^-,\jump{v_{lh}})_{\facesinternal}| \\
=  & |2((\sqrt{\mathbf{D}^l}_+ \nabla_h v_{lh}^+, \delta^+_l \sqrt{\mathbf{D}^l}_+ \jump{v_{lh}})_{\facesinternal}+ 2((\sqrt{\mathbf{D}^l}_-\nabla_h v_{lh}^-, \delta^-_l \sqrt{\mathbf{D}^l}_- \jump{v_{lh}})_{\facesinternal}| \\
\text{(H\"{o}lder and Young inequalities)}\:\:\leq &  \sum_{F\in\facesinternal} \alpha_F^+ \|\sqrt{\mathbf{D}^l}_+ \nabla_h v_{lh}^+\|_{\mathbf{L}^2(F)}^2 + \alpha_F^-\|\sqrt{\mathbf{D}^l}_- \nabla_h v_{lh}^-\|_{\mathbf{L}^2(F)}^2 \\[-2pt] & \quad + (\alpha^-_F)^{-1}\|\delta^-_l \sqrt{\mathbf{D}^l}_-\jump{v_{lh}}\|_{\mathbf{L}^2(F)}^2 +  (\alpha^+_F)^{-1} \|\delta^+_l \sqrt{\mathbf{D}^l}_+\jump{v_{lh}}\|_{\mathbf{L}^2(F)}^2
\end{aligned}
\end{equation}
Then, we can bound the first two terms at the right-hand side by using the inverse-trace-inequality:
\begin{equation}
\begin{aligned}
|2(\avgl{\mathbf{D}^l \nabla_h v_{lh}}, \jump{v_{lh}})_{\facesinternal}| \leq & \ell^2 \sum_{F\in\facesinternal} C_\mathrm{inv}\left(\alpha_F^+\dfrac{|F|}{|K_+|}\|\sqrt{\mathbf{D}^l} \nabla_h v_{lh}\|_{\mathbf{L}^2(K_+)}^2+\alpha_F^-\dfrac{|F|}{|K_-|}\|\sqrt{\mathbf{D}^l} \nabla_h v_{lh}\|_{\mathbf{L}^2(K_-)}^2\right)  \\[-2pt] & \qquad +  (\alpha^-_F)^{-1}\|\delta^-_l \sqrt{\mathbf{D}^l}_-\jump{v_{lh}}\|_{\mathbf{L}^2(F)}^2 +  (\alpha^+_F)^{-1}\|\delta^+_l \sqrt{\mathbf{D}^l}_+\jump{v_{lh}}\|_{\mathbf{L}^2(F)}^2.
\end{aligned}
\end{equation}
We choose the parameter
\begin{equation}
\alpha_F^\pm = \dfrac{|K_\pm|}{2|F|C_\mathrm{inv}\ell^2},
\end{equation}
Then we can rewrite
\begin{equation}
\label{eq:coer:laststep}
\begin{aligned}
 \sum_{F\in\facesinternal} & 2 C_\mathrm{inv}\ell^2\dfrac{|F|}{|K_-|}\|\delta^-_l \sqrt{\mathbf{D}^l}_-\jump{v_{lh}}\|_{\mathbf{L}^2(F)}^2 + 2 C_\mathrm{inv}\ell^2\dfrac{|F|}{|K_+|}\|\delta^+_l \sqrt{\mathbf{D}^l}_+\jump{v_{lh}}\|_{\mathbf{L}^2(F)}^2 \\ 
\leq & \sum_{F\in\facesinternal} \int_F \left(2 C_\mathrm{inv}\ell^2\left(\dfrac{|F|}{|K_-|}(\delta^-_l)^2 d_l^{K_-}+\dfrac{|F|}{|K_+|}(\delta^+_l)^2 d_l^{K_+}\right)|\jump{v_{lh}}|^2\right) \\ 
\leq & \sum_{F\in\facesinternal} \int_F \left(2 C_\mathrm{inv}\ell^2 \left\{d_l^{K}\right\}_{\mathrm{H}}\dfrac{|F|}{\left\{|K|\right\}_{\mathrm{H}}}|\jump{v_{lh}}|^2\right) =  \sum_{F\in\facesinternal} \int_F \left(\hat{\eta}_l |\jump{v_{lh}}|^2\right) = \|\hat{\eta}^{1/2}_l\jump{v_{lh}}\|^2_{\facesinternal}.
\end{aligned}
\end{equation}
Finally, by using equations \eqref{eq:coer:tested} and \eqref{eq:coer:laststep}, and assuming that $\tilde{\eta}>4 C_\mathrm{inv}$ (similar arguments apply to the Dirichlet boundary faces), we find a coercivity estimate
\begin{equation}
\mathcal{A}(\boldsymbol{v}_h,\boldsymbol{v}_h) \geq \sum_{l=1}^{n} \frac{1}{2}\left[\|\sqrt{\mathbf{D}^l} \nabla_h v_{lh}\|^2_2 + \|\eta_l^{1/2} \jump{v_{lh}}\|^2_{\facesinternal\cup\facesD} \right] = \sum_{l=1}^{n} \frac{1}{2}\|v_{lh}\|^2_\mathrm{DG} = \frac{1}{2}\|\boldsymbol{v}_h\|^2_\mathrm{DG}.
\end{equation}
\end{proof}
\begin{proposition}[Continuity]
\label{prop:continuity}
The bilinear form $\mathcal{A}$, defined in Eq.~\eqref{eq:bilinear_form_def}, is continuous, namely it satisfies
\begin{equation}
\left|\mathcal{A}(\boldsymbol{u}_h,\boldsymbol{v}_h)\right|
\leq 4\|\boldsymbol{u}_h\|_\mathrm{DG}\|\boldsymbol{v}_h\|_\mathrm{DG}
\qquad \forall\,\boldsymbol{u}_h,\boldsymbol{v}_h\in\Wh.
\end{equation}
\end{proposition}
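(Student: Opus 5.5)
We follow the same route as in the proof of Proposition~\ref{prop:coercivity}. First we use the decomposition \eqref{eq:decomposition_A} to write $\mathcal{A}(\boldsymbol{u}_h,\boldsymbol{v}_h)$ as a sum over $l=1,\dots,n$ of scalar bilinear forms. Each scalar form has four contributions: the volume term, the two consistency/symmetry face terms (interior and Dirichlet faces together), and the penalty term. The DG norm is $\|v_{lh}\|^2_\mathrm{DG}=\|\sqrt{\mathbf{D}^l}\nabla_h v_{lh}\|^2+\|\eta_l^{1/2}\jump{v_{lh}}\|^2_{\facesinternal\cup\facesD}$, as in the end of the coercivity proof. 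By Cauchy--Schwarz, the volume term and the penalty term are each bounded by $\|u_{lh}\|_\mathrm{DG}\|v_{lh}\|_\mathrm{DG}$.

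The main step is the face term $(\avgl{\mathbf{D}^l\nabla_h u_{lh}},\jump{v_{lh}})_{\facesinternal}$. On each face we split the weighted average into its $\pm$ parts, as in the coercivity proof, and write $\delta_l^\pm\mathbf{D}^l_\pm\nabla u^\pm\cdot\jump{v}=(\sqrt{\mathbf{D}^l}_\pm\nabla u^\pm)\cdot(\delta^\pm_l\sqrt{\mathbf{D}^l}_\pm\jump{v})$. We then weight by $(\eta_l)^{\pm 1/2}$ and apply Cauchy--Schwarz on $F$ and then over faces. This gives a factor $\big(\sum_F\sum_\pm \eta_l^{-1}\|\sqrt{\mathbf{D}^l}_\pm\nabla u^\pm\|^2_{L^2(F)}(\delta_l^\pm)^2 d_l^{K_\pm}\big)^{1/2}$ times $\|\eta_l^{1/2}\jump{v_{lh}}\|_{\facesinternal}$. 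Here we use $|\sqrt{\mathbf{D}^l}_\pm\jump{v}|^2=d_l^{K_\pm}|\jump{v}|^2$, since $\jump{v}$ is parallel to $\boldsymbol n$.

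Next we apply the inverse trace inequality \cite[Lemma 11]{cangiani_hp_version_2017}, which gives $\|\sqrt{\mathbf{D}^l}\nabla u\|^2_{L^2(F)}\le C_\mathrm{inv}\ell^2\frac{|F|}{|K_\pm|}\|\sqrt{\mathbf{D}^l}\nabla u\|^2_{L^2(K_\pm)}$ (the tensor being elementwise constant). The resulting weight is $C_\mathrm{inv}\ell^2\frac{|F|}{|K_\pm|}(\delta_l^\pm)^2d_l^{K_\pm}\eta_l^{-1}$, and we must show it is bounded by a constant. Two harmonic-average identities do this. First, $\delta_l^\pm d_l^{K_\pm}=\tfrac12\{d_l^K\}_\mathrm{H}$, so $(\delta_l^\pm)^2 d_l^{K_\pm}\le\delta_l^\pm d_l^{K_\pm}\le \tfrac12\{d^K_l\}_\mathrm H$ because $\delta_l^\pm\le1$. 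Second, $\frac{1}{|K_\pm|}\le \frac{2}{\{|K|\}_\mathrm{H}}$. Together these bound the weight by $C_\mathrm{inv}/\tilde\eta\le 1/4$. Summing over the elements adjacent to each face (the mesh assumptions bound the number of faces per element, or we simply sum face-by-face contributions) controls the face term by $c\,\|\sqrt{\mathbf{D}^l}\nabla_h u_{lh}\|\,\|\eta_l^{1/2}\jump{v_{lh}}\|$ with $c\le1$. The Dirichlet faces are handled identically, with $\delta=1$, and give the same bound. The symmetric term is treated by exchanging $u$ and $v$.

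Adding the four bounds gives $|\mathcal A_l(u_{lh},v_{lh})|\le 4\|u_{lh}\|_\mathrm{DG}\|v_{lh}\|_\mathrm{DG}$. A final discrete Cauchy--Schwarz inequality over $l$ then yields the claim. The step we expect to be the main obstacle is making the constant exactly $1$ in the face bound. That requires the harmonic-average bookkeeping above, namely combining the $\delta^\pm$ weights with $\{d\}_\mathrm H$ and $\{|K|\}_\mathrm H$, and the choice $\tilde\eta\ge4C_\mathrm{inv}$ from Proposition~\ref{prop:coercivity}. The counting of elements over shared faces also has to be handled carefully, so that no extra mesh-dependent factor enters.
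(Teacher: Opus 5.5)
Your argument follows the paper's route: the decomposition \eqref{eq:decomposition_A}, Cauchy--Schwarz for the volume and penalty terms, splitting $\avgl{\cdot}$ into its two traces, the inverse trace inequality, and the harmonic-average bookkeeping of the coercivity proof. The paper's own proof only invokes that bookkeeping as ``the same steps''. Your identities $\delta_l^\pm d_l^{K_\pm}=\tfrac12\{d_l^K\}_\mathrm{H}$ and $|K_\pm|^{-1}\le 2\{|K|\}_\mathrm{H}^{-1}$ are exactly what bounds each per-face weight by $C_\mathrm{inv}/\tilde\eta$.

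There are two small slips. First, both traces multiply the same $\eta_l^{1/2}\jump{v_{lh}}$, so the factor you display is off by $\sqrt2$: plain Cauchy--Schwarz over $\pm$ produces that $\sqrt2$, and the displayed inequality itself fails (take $\delta_l^\pm=\tfrac12$, $d_l^{K_\pm}=1$, equal traces). Apply Cauchy--Schwarz with weights $\delta_l^\pm$ instead, using $\delta_l^++\delta_l^-=1$. This replaces $(\delta_l^\pm)^2 d_l^{K_\pm}$ by $\delta_l^\pm d_l^{K_\pm}$ and leaves the rest of your bound unchanged. Second, sum over $\facesinternal\cup\facesD$ at once; adding two separate bounds with the same $c$ costs another $\sqrt2$.

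The genuine issue is the one you flagged, and neither of your suggested fixes resolves it. After the inverse trace inequality, each face contributes $\|\sqrt{\mathbf{D}^l}\nabla_h u_{lh}\|^2_{L^2(K_\pm)}$. Summing over faces therefore counts every element once per face, and what you actually obtain is
\[
c\le\Big(\frac{N_\partial\,C_\mathrm{inv}}{\tilde\eta}\Big)^{1/2}\le\frac{\sqrt{N_\partial}}{2},
\]
where $N_\partial$ is the maximal number of faces per element. A uniform bound on $N_\partial$, which the mesh assumptions do provide, only makes $c$ depend on mesh regularity. Summing face by face is exactly what produces the factor. So $c\le1$ does not follow from $\tilde\eta\ge4C_\mathrm{inv}$ once $N_\partial>4$.

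The paper's proof contains the same tacit step: its coercivity proof already identifies $\sum_F\sum_\pm\|\cdot\|^2_{L^2(K_\pm)}$ with $\|\cdot\|^2$, i.e.\ it silently absorbs $N_\partial$ into $C_\mathrm{inv}$. You should make this explicit, for instance by requiring $\tilde\eta\ge 4N_\partial C_\mathrm{inv}$, which gives $c\le\tfrac12$.

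Adding four separate bounds is also wasteful. Write $a_u=\|\sqrt{\mathbf{D}^l}\nabla_h u_{lh}\|$ and $j_u=\|\eta_l^{1/2}\jump{u_{lh}}\|_{\facesinternal\cup\facesD}$. Then $|\mathcal{A}_l(u_{lh},v_{lh})|\le a_ua_v+j_uj_v+c\,(a_uj_v+j_ua_v)\le(1+c)\|u_{lh}\|_\mathrm{DG}\|v_{lh}\|_\mathrm{DG}$. The stated constant $4$ therefore only needs $c\le3$, i.e.\ $N_\partial\le36$ under $\tilde\eta\ge4C_\mathrm{inv}$. Even so, some explicit control of $N_\partial$, or of the constant in an element-summed inverse trace inequality, is unavoidable with a face-wise inverse estimate.
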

\begin{proof}
First of all, we start from Equation \eqref{eq:decomposition_A} by considering $\boldsymbol{u}=\boldsymbol{u}_h$ and $\boldsymbol{v}=\boldsymbol{v}_h$. Using the H\"older inequality we obtain:
\begin{equation}
(\sqrt{\mathbf{D}^l} \nabla_h u_{lh}, \sqrt{\mathbf{D}^l} \nabla_h v_{lh}) \leq \|\sqrt{\mathbf{D}^l} \nabla_h u_{lh}\|_2\|\sqrt{\mathbf{D}^l} \nabla_h v_{lh}\|_2,
\end{equation}
and
\begin{equation}
(\eta_l \jump{u_{lh}}, \jump{v_{lh}})_{\facesinternal} \leq \|\eta_l^{1/2} \jump{u_{lh}}\|_{\facesinternal}\|\eta_l^{1/2} \jump{v_{lh}}\|_{\facesinternal}.
\end{equation}
Now, we have to treat the following quantity 
\begin{equation}
\begin{aligned}
 |(\avgl{\mathbf{D}^l \nabla_h u_{lh}}, \jump{v_{lh}})_{\facesinternal}| = & |((\delta^+_l \mathbf{D}^l_+ \nabla_h u_{lh}^+,\jump{v_{lh}})_{\facesinternal}+ ((\delta^-_l \mathbf{D}^l_- \nabla_h u_{lh}^-,\jump{v_{lh}})_{\facesinternal}| \\
=  & |((\sqrt{\mathbf{D}^l}_+ \nabla_h v_{lh}^+, \delta^+_l \sqrt{\mathbf{D}^l}_+ \jump{v_{lh}})_{\facesinternal}+ ((\sqrt{\mathbf{D}^l}_-\nabla_h v_{lh}^-, \delta^-_l \sqrt{\mathbf{D}^l}_- \jump{v_{lh}})_{\facesinternal}| \\
\text{(H\"{o}lder inequality)}\:\:\leq &  \sum_{F\in\facesinternal} \left(\|(\alpha_F^+)^{1/2}\sqrt{\mathbf{D}^l}_+ \nabla_h u_{lh}^+\|_{\mathbf{L}^2(F)}^2 + \|(\alpha_F^-)^{1/2}\sqrt{\mathbf{D}^l}_- \nabla_h u_{lh}^-\|_{\mathbf{L}^2(F)}^2\right)^{1/2} \\[-2pt] & \quad \left(\|(\alpha^-_F)^{-1/2}\delta^-_l \sqrt{\mathbf{D}^l}_-\jump{v_{lh}}\|_{\mathbf{L}^2(F)}^2 +   \|(\alpha^+_F)^{-1/2}\delta^+_l \sqrt{\mathbf{D}^l}_+\jump{v_{lh}}\|_{\mathbf{L}^2(F)}^2\right)^{1/2}
\end{aligned}
\end{equation}
Finally, using the inverse-trace-inequality and the same steps of the coercivity proof we arrive at the following result
\begin{equation*}
    (\avgl{\mathbf{D}^l \nabla_h u_{lh}}, \jump{v_{lh}})_{\facesinternal}| \leq \|\sqrt{\mathbf{D}^l} \nabla_h u_{lh}\|_2\|\eta^{1/2}_l\jump{v_{lh}}\|_{\facesinternal}.
\end{equation*}
Repeating for the last term and for the Dirichlet boundary faces, we derive the thesis.
\end{proof}
Finally, we recall a specific type of Gagliardo-Nirenberg inequality that will be useful in the subsequent analysis.
\begin{proposition}[Gagliardo-Nirenberg-type $L^p(\Omega)$-interpolation]
The following inequality holds for $d=2$:
\begin{equation}
\label{eq:gn_adapt}
\Lpnorm{u}{p}^\alpha 
\leq \frac{\alpha(p-2)}{2p}\varepsilon_{p,\alpha}^{\frac{2p}{\alpha(p-2)}} \DGnorm{u}^{2} + \frac{2p-\alpha p +2\alpha}{2p}
\left(\frac{\CGN{p}^\alpha}{\varepsilon_{p,\alpha}}\right)^{\frac{2p}{2p-\alpha p +2\alpha}}
\Lpnorm{u}{2}^{\frac{4\alpha}{2p+2\alpha-\alpha p}},
\end{equation}
for any $\varepsilon_{p,\alpha}>0$ under the assumptions $p > 2$ and $\alpha \in (0, \frac{2p}{p-2})$.
\end{proposition}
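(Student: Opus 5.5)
The plan is to combine a discrete Gagliardo--Nirenberg inequality for broken polynomial spaces with a weighted Young inequality. The first step is to recall (or take as a standing ingredient) the two-dimensional discrete Gagliardo--Nirenberg inequality on piecewise polynomial spaces satisfying Assumption~\ref{ass:mesh}:
\begin{equation}
\Lpnorm{u}{p} \leq \CGN{p}\, \DGnorm{u}^{\frac{p-2}{p}}\, \Lpnorm{u}{2}^{\frac{2}{p}}, \qquad 2<p<\infty,
\end{equation}
valid for $u\in\Wh$. In $d=2$ the interpolation exponent is $\theta = 1-2/p$. The inequality follows from the discrete Sobolev embedding for broken spaces, which uses the simplicial submesh condition and a reconstruction (averaging) operator into $\mathbf{H}^1$ controlled by the dG norm. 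Since $|\Gamma_\mathrm{D}|>0$, the dG norm plays the role of the full $H^1$ norm, with the Dirichlet jump terms providing a discrete Poincar\'e--Friedrichs control. The dependence of the dG norm on the diffusion tensors is absorbed into $\CGN{p}$ through the lower bounds $d_0^l$.

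Next, I raise this inequality to the power $\alpha$:
\begin{equation}
\Lpnorm{u}{p}^\alpha \leq \CGN{p}^\alpha \DGnorm{u}^{\frac{\alpha(p-2)}{p}} \Lpnorm{u}{2}^{\frac{2\alpha}{p}}.
\end{equation}
I then insert the factor $\varepsilon_{p,\alpha}\cdot \varepsilon_{p,\alpha}^{-1}$, writing the right-hand side as $\bigl(\varepsilon_{p,\alpha}\DGnorm{u}^{\alpha(p-2)/p}\bigr)\bigl(\CGN{p}^\alpha\varepsilon_{p,\alpha}^{-1}\Lpnorm{u}{2}^{2\alpha/p}\bigr)$. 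I apply Young's inequality $ab\le a^r/r+b^{r'}/r'$ with $r = \frac{2p}{\alpha(p-2)}$, chosen so that the first factor raised to $r$ gives exactly $\DGnorm{u}^2$. The condition $\alpha<\frac{2p}{p-2}$ is precisely $r>1$, and $p>2$ with $\alpha>0$ ensures $r<\infty$.

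Finally, the conjugate exponent is $r' = \frac{2p}{2p-\alpha p+2\alpha}$, and $1/r'=\frac{2p-\alpha p+2\alpha}{2p}$. The $L^2$ power becomes $\frac{2\alpha}{p}\cdot\frac{2p}{2p+2\alpha-\alpha p}=\frac{4\alpha}{2p+2\alpha-\alpha p}$. This reproduces \eqref{eq:gn_adapt} term by term, including the coefficients $1/r$ and $1/r'$.

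The main obstacle is the first step: justifying the discrete Gagliardo--Nirenberg inequality on polygonal meshes in the weighted dG norm. I would rely on the known broken Sobolev embeddings (Di Pietro--Ern type, via the submesh and an Oswald-type interpolant). The algebra in the remaining steps is routine.
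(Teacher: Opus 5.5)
Your proposal is correct and matches the paper's proof essentially step for step. The paper also cites the two-dimensional discrete Gagliardo--Nirenberg inequality with $\theta = 1-2/p$ (from Gazca-Orozco and Kaltenbach), raises it to the power $\alpha$, splits the product using $\varepsilon_{p,\alpha}$, and applies Young's inequality with $q=\frac{2p}{\alpha(p-2)}$ and $q'=\frac{2p}{2p-\alpha p+2\alpha}$; your additional remarks on justifying the discrete embedding (submesh, reconstruction operator, Dirichlet-jump Poincar\'e control, absorbing the diffusion lower bounds into $\CGN{p}$) go beyond what the paper writes, which simply invokes the cited result.
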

\begin{proof}
    First of all, we recall that for $d=2$, the following Gagliardo-Nirenberg inequality holds \cite{gazca-orozco_stability_2025} for $d=2$:
    \begin{equation*}
        \Lpnorm{u}{p} \leq \CGN{p} \DGnorm{u}^\theta \Lpnorm{u}{2}^{1-\theta}, \quad \mathrm{with}\; \theta = 1-\frac{2}{p}.
    \end{equation*}
    Then, we can observe that for a general exponent $\alpha$ and for $p > 2$, we obtain:
    \begin{equation*}
        \Lpnorm{u}{p}^\alpha 
         \, \leq \left(\varepsilon_{p,\alpha} \DGnorm{u}^{\alpha\frac{p-2}{p}}\right) \, \left(\frac{\CGN{p}^\alpha}{\varepsilon_{p,\alpha}} \Lpnorm{u}{2}^{2\frac{\alpha}{p}}\right) \leq \frac{1}{q}\left(\varepsilon_{p,\alpha} \DGnorm{u}^{\alpha\frac{p-2}{p}}\right)^q + \frac{1}{q'}\left(\frac{\CGN{p}^\alpha}{\varepsilon_{p,\alpha}} \Lpnorm{u}{2}^{2\frac{\alpha}{p}}\right)^{q'},
    \end{equation*}
    where in the last step we used the Young's inequality and $\frac{1}{q}+\frac{1}{q'}=1$. Finally, we want that $\frac{\alpha(p-2) q}{p} = 2$, then $q = \frac{2p}{\alpha(p-2)}$ and $q' = \frac{2p}{2p-\alpha p +2\alpha}$. Substituting, we obtain the thesis.
\end{proof}
Finally, we report a small extension of the Perov inequality \cite{webb_extensions_2018}, that allows to handle non-constant forcing terms $a$.
\begin{proposition}[Perov-type inequality]
\label{prop:perov}
Let $y:[0,T)\to\mathbb{R}_{\geq 0}$ be a continuous function satisfying, for all $t\in[0,T)$,
\begin{equation}
y(t) \;\leq\; a(t) + \int_0^t b(\tau)\,y(\tau)\,\dtau + \int_0^t c(\tau)\,y(\tau)^\gamma\,\dtau,
\label{eq:perov_hyp}
\end{equation}
where $\gamma>1$ is fixed, $a:[0,T)\to\mathbb{R}_{\geq 0}$ is non-decreasing and continuous, and $b,c:[0,T)\to\mathbb{R}_{\geq 0}$ are continuous. Then, for every $t\in[0,T)$ such that $a(t)>0$ and
\begin{equation}
a(t)^{\gamma-1}\,(\gamma-1)\int_0^t c(\tau)\,\exp\left((\gamma-1)\left({\displaystyle\int_0^\tau b(\xi)\,\mathrm{d}\xi}\right)\right)\,\dtau \;<\; 1,
\label{eq:perov_cond}
\end{equation}
the following estimate holds:
\begin{equation}
\label{eq:perov}
y(t) \;\leq\; \frac{a(t)\,\exp\left({\displaystyle\int_0^t b(\tau)\,\dtau}\right)}{\Big(1 - a(t)^{\gamma-1}\,(\gamma-1)\int_0^t c(\tau)\,\exp\left((\gamma-1)\left({\displaystyle\int_0^\tau b(\xi)\,\mathrm{d}\xi}\right)\right)\,\dtau\Big)^{\frac{1}{\gamma-1}}}.
\end{equation}
\end{proposition}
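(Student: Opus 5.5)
The plan is to reduce to the classical Bihari--Perov argument through a comparison function that absorbs the linear term with an exponential integrating factor. Fix $t\in[0,T)$ satisfying \eqref{eq:perov_cond}. Since $a$ is non-decreasing, for every $s\in[0,t]$ we have $a(s)\le a(t)$. Hence
\begin{equation}
y(s)\le a(t)+\int_0^s b(\tau)\,y(\tau)\,\dtau+\int_0^s c(\tau)\,y(\tau)^\gamma\,\dtau=:z(s),\qquad s\in[0,t].
\end{equation}
Freezing $a$ at the final time is where the monotonicity hypothesis enters, and it is why the final bound involves $a(t)$ rather than a running value. The function $z$ is $C^1$ on $[0,t]$, positive, since $z\ge a(t)>0$, and satisfies $z(0)=a(t)$. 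Moreover
\begin{equation}
z'(s)=b(s)y(s)+c(s)y(s)^\gamma\le b(s)z(s)+c(s)z(s)^\gamma,
\end{equation}
because $b,c\ge0$ and $0\le y\le z$.

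Next I remove the linear term. Set $B(s)=\int_0^s b(\xi)\,\mathrm{d}\xi$ and $u(s)=e^{-B(s)}z(s)$. Then
\begin{equation}
u'(s)\le c(s)e^{-B(s)}z(s)^\gamma=c(s)e^{(\gamma-1)B(s)}u(s)^\gamma .
\end{equation}
Since $u>0$, I divide by $u^\gamma$ and use $\frac{\mathrm{d}}{\mathrm{d}s}\,u^{1-\gamma}=(1-\gamma)u^{-\gamma}u'$ together with $\gamma>1$. This gives
\begin{equation}
\frac{\mathrm{d}}{\mathrm{d}s}\,u(s)^{1-\gamma}\ge-(\gamma-1)\,c(s)\,e^{(\gamma-1)B(s)}.
\end{equation}
Integrating from $0$ to $t$ with $u(0)=a(t)$ yields
\begin{equation}
u(t)^{1-\gamma}\ge a(t)^{1-\gamma}-(\gamma-1)\int_0^t c(\tau)\,e^{(\gamma-1)B(\tau)}\,\dtau .
\end{equation}
Multiplying by $a(t)^{\gamma-1}>0$, the right-hand side becomes $a(t)^{1-\gamma}$ times the quantity $1-a(t)^{\gamma-1}(\gamma-1)\int_0^t c\,e^{(\gamma-1)B}$, which is strictly positive by \eqref{eq:perov_cond}. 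I then take the power $-1/(\gamma-1)$, which reverses the inequality. This gives $u(t)\le a(t)\bigl(1-a(t)^{\gamma-1}(\gamma-1)\int_0^t c\,e^{(\gamma-1)B}\bigr)^{-1/(\gamma-1)}$. Finally, $y(t)\le z(t)=e^{B(t)}u(t)$, which is exactly \eqref{eq:perov}.

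The step needing the most care is the positivity bookkeeping. Dividing by $u^\gamma$ requires $u>0$ on all of $[0,t]$, and this holds because $z\ge a(t)>0$. Inverting the power requires positivity of the bracket, which holds by \eqref{eq:perov_cond}. A further point is that the denominator stays positive on the whole interval $[0,t]$, not only at $t$: the integral of the nonnegative function $c\,e^{(\gamma-1)B}$ is monotone in the upper limit, so the condition at $t$ implies it at every $s\le t$. Consequently no blow-up of $z$ can occur before $t$, and the differential inequality is legitimate on the whole interval. Continuity of $y$, $b$ and $c$ guarantees that $z$ is $C^1$, so no absolute-continuity subtleties arise.
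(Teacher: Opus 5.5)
Your proof is correct and follows the same route as the paper. You freeze $a$ at $a(t)$ using monotonicity and then run the comparison argument with the Bernoulli substitution, which the paper leaves to Webb's Theorem~3.1 and you write out in full; applying the integrating factor $e^{-B}$ before the substitution rather than after is equivalent. Your closing remark about excluding blow-up before $t$ is unnecessary, since $z$ is an integral of continuous functions and is automatically finite on $[0,t]$.
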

\begin{proof}
The proof follows the steps of \cite[Theorem~3.1]{webb_extensions_2018}, noting that since $a$ is non-decreasing we may replace $a(\tau)$ with $a(t)$ for every $\tau\in[0,t]$, freezing the right-hand side of \eqref{eq:perov_hyp} at the fixed value $\bar{a} := a(t)$ before proceeding with the Bernoulli substitution $u=z^{1-\gamma}$.
\end{proof}

\section{Stability analysis}
\label{sec:stability}
Before deriving the stability results, we define the following energy functionals,
\begin{subequations}
\begin{alignat}{3}
\label{eq:enorm1}
    \eonenorm{\boldsymbol{v}_h}^2 := & \Lpnorm{\boldsymbol{v}_h}{2}^2 
    + \int_{0}^{t} \left( \frac{1}{2}
    \DGnorm{\boldsymbol{v}_h}^2 + 2 \sum_{j=1}^{n_1} c_j^{(1)} 
    \Lpnormsc{v_{jh}}{4}^4\right)\dtau, \quad && \forall \boldsymbol{v}_h \in \Wh, \\
\label{eq:enorm2}
    \etwonorm{\boldsymbol{v}_h}^2
:= & \frac{1}{4} \DGnorm{\boldsymbol{v}_h}^2
 +\sum_{j=1}^{n_1}c_j^{(1)}
\Lpnormsc{v_{jh}}{4}^4
+\frac{1}{2}\int_0^t
\Lpnorm{\dot{\boldsymbol{v}_h}}{2}^2\,\dtau, \quad && \forall \boldsymbol{v}_h \in \Wh.
\end{alignat}
\end{subequations}
To study the stability of system \eqref{eq:semidiscrete}, we start by selecting the first block, associated with $p=1$. In this case, we have to analyze the stability of the system:
\begin{equation*}
    (\dot{\boldsymbol{w}}^{(1)}_h,\boldsymbol{v}^{(1)}_h)+ \mathcal{A}_1(\boldsymbol{w}^{(1)}_h,\boldsymbol{v}^{(1)}_h ) = (\mathbf{L}^{(1,1)} + (\mathbf{Q}^{(1,1,1)} + \mathbb{C}^{(1,1,1,1)} \boldsymbol{w}_h^{(1)}) \boldsymbol{w}_h^{(1)}) \boldsymbol{w}_h^{(1)}  + \boldsymbol{\gamma}^{(1)},\boldsymbol{v}_h^{(1)}),
\end{equation*}
where we underline we do not have any cubic reaction dependent on previous blocks.
\begin{theorem}[Stability result for the first block of populations]
\label{thm:stab_DG_1}
Let $d=2$ and suppose Assumptions \ref{ass:c_structure} and \ref{ass:mesh} hold. Let $\boldsymbol{w}_h(t) \in \Wh$ be the solution of problem \eqref{eq:semidiscrete}. If the stability parameter $\eta_l$ defined in Equation \eqref{eq:penalty_def} for each $l=1,\dots,n$ has been chosen sufficiently large, then the following stability estimates hold:
\begin{equation}
\label{eq:energy_L2}
    \eonenorm{\boldsymbol{w}_{h}^{(1)}(t)}^2 \leq \dfrac{\left(
    \Lpnorm{\boldsymbol{w}_{0h}^{(1)}}{2}^2 + \frac{t}{l_{\infty}} 
    \Lpnorm{\boldsymbol{\gamma}^{(1)}}{2}^2 \right)e^{3l_{\infty}t}}{\left(1-\frac{2q_{\infty}^2\CGN{3}^6(e^{3l_{\infty}t}-1)}{3l_{\infty}}\left(\Lpnorm{\boldsymbol{w}_{0h}^{(1)}}{2}^2 + \frac{t}{l_\infty} \Lpnorm{\boldsymbol{\gamma}^{(1)}}{2}^2\right)\right)} =: \Cen{1}{1}(t).
\end{equation}
and
\begin{equation}
\label{eq:energy_DG}
\etwonorm{\boldsymbol{w}_{h}^{(1)}(t)}^2 \leq 
\left(2\DGnorm{\boldsymbol{w}_{0h}^{(1)}}^2
+
c_\infty
\Lpnorm{\boldsymbol{w}_{0h}^{(1)}}{4}^4
+
t\Lpnorm{\boldsymbol{\gamma}^{(1)}}{2}^2
+
l_\infty\int_0^t
\Cen{1}{1}(\tau)\,\dtau
\right)
e^{ 2q_\infty^2\CGN{4}^{4}
\displaystyle\int_0^t
\Cen{1}{1}(\tau)\,\dtau
},
\end{equation}
for $t \in (0,\hat{t})$ such that the nondegeneracy condition holds:
\begin{equation*}
    \left(e^{3l_\infty \hat{t}}-1\right) \left( \Lpnorm{\boldsymbol{w}_{0h}^{(1)}}{2}^{2} +
\frac{\hat{t}}{l_\infty} \Lpnorm{\boldsymbol{\gamma}^{(1)}}{2}^{2}
\right) < \frac{3l_\infty}{2q_\infty^2 \CGN{3}^{6}}.
\end{equation*}
\end{theorem}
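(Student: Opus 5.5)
The first estimate \eqref{eq:energy_L2} comes from testing the first-block equation with $\boldsymbol{v}_h^{(1)}=\boldsymbol{w}_h^{(1)}$. This gives
\[
\tfrac12\tfrac{\mathrm{d}}{\mathrm{d}t}\Lpnorm{\boldsymbol{w}_h^{(1)}}{2}^2+\mathcal{A}_1(\boldsymbol{w}_h^{(1)},\boldsymbol{w}_h^{(1)}) = (\mathbf{L}^{(1,1)}\boldsymbol{w}_h^{(1)},\boldsymbol{w}_h^{(1)})+(\mathbf{Q}^{(1,1,1)}\boldsymbol{w}_h^{(1)}\boldsymbol{w}_h^{(1)},\boldsymbol{w}_h^{(1)})-\sum_j c_j^{(1)}\Lpnormsc{w_{jh}}{4}^4+(\boldsymbol{\gamma}^{(1)},\boldsymbol{w}_h^{(1)}).
\]
The cubic term is exactly this by Assumption \ref{ass:c_structure}, so it is dissipative and moves to the left-hand side. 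Coercivity (Proposition \ref{prop:coercivity}) bounds $\mathcal{A}_1$ below by $\frac12\DGnorm{\cdot}^2$. The linear term is bounded by $l_\infty\Lpnorm{\boldsymbol{w}_h^{(1)}}{2}^2$, and the forcing by Young as $\frac{1}{2l_\infty}\Lpnorm{\boldsymbol{\gamma}^{(1)}}{2}^2+\frac{l_\infty}{2}\Lpnorm{\boldsymbol{w}_h^{(1)}}{2}^2$.

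The quadratic term is the critical one. By H\"older it is bounded by $q_\infty\Lpnorm{\boldsymbol{w}_h^{(1)}}{3}^3$. I then apply the Gagliardo--Nirenberg estimate \eqref{eq:gn_adapt} with $p=3$, $\alpha=3$, which is admissible since $3<2p/(p-2)=6$. This gives $\Lpnorm{u}{3}^3\le \frac12\varepsilon^{4}\DGnorm{u}^2+\frac12(\CGN{3}^3/\varepsilon)^{4/3}\Lpnorm{u}{2}^{4}$, where the exponent $\frac{4\alpha}{2p+2\alpha-\alpha p}=12/3=4$. I choose $\varepsilon$ so that $q_\infty\cdot\frac12\varepsilon^4$ absorbs a quarter of the dG norm. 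The remainder is then of the form $c\,q_\infty^2\CGN{3}^6\Lpnorm{\boldsymbol{w}_h^{(1)}}{2}^4$, which matches the constant in the denominator of $\Cen{1}{1}$. Multiplying by 2 and integrating in time gives
\[
y(t):=\eonenorm{\boldsymbol{w}_h^{(1)}}^2\le a(t)+\int_0^t 3l_\infty\, y\,\dtau+\int_0^t 2q_\infty^2\CGN{3}^6\, y^2\,\dtau,
\]
with $a(t)=\Lpnorm{\boldsymbol{w}_{0h}^{(1)}}{2}^2+\frac{t}{l_\infty}\Lpnorm{\boldsymbol{\gamma}^{(1)}}{2}^2$, which is nondecreasing. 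Here I use that $\Lpnorm{\cdot}{2}^2\le y$. Proposition \ref{prop:perov} with $\gamma=2$, $b=3l_\infty$, $c=2q_\infty^2\CGN{3}^6$ then yields the denominator
\[
1-a\cdot 2q_\infty^2\CGN{3}^6\,\frac{e^{3l_\infty t}-1}{3l_\infty},
\]
which is exactly \eqref{eq:energy_L2}. The nondegeneracy condition is \eqref{eq:perov_cond}. The main obstacle is the bookkeeping of constants, so that the Young splittings reproduce exactly the factors $3l_\infty$ and $2q_\infty^2\CGN{3}^6$, in particular the coefficient from balancing $\varepsilon$.

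For \eqref{eq:energy_DG}, I test with $\boldsymbol{v}_h=\dot{\boldsymbol{w}}_h^{(1)}$. By symmetry of $\mathcal{A}$ we have $\mathcal{A}(\boldsymbol{w},\dot{\boldsymbol{w}})=\frac12\frac{\mathrm{d}}{\mathrm{d}t}\mathcal{A}(\boldsymbol{w},\boldsymbol{w})$, and the cubic term gives $-\frac14\frac{\mathrm{d}}{\mathrm{d}t}\sum_j c_j^{(1)}\Lpnormsc{w_{jh}}{4}^4$. The right-hand side terms are handled by Young against $\frac12\Lpnorm{\dot{\boldsymbol{w}}_h}{2}^2$. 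Concretely, $\Lpnorm{\mathbf{L}\boldsymbol{w}}{2}^2\le l_\infty^2\Lpnorm{\boldsymbol{w}}{2}^2$, controlled by $\Cen{1}{1}$ from the first step; the forcing gives $\Lpnorm{\boldsymbol{\gamma}}{2}^2$; and the quadratic term gives $q_\infty^2\Lpnorm{\boldsymbol{w}}{4}^4$. For the last one I use \eqref{eq:gn_adapt}, or directly $\Lpnorm{u}{4}^4\le\CGN{4}^4\DGnorm{u}^2\Lpnorm{u}{2}^2$ (since $\theta=1/2$), followed by $\Lpnorm{u}{2}^2\le \Cen{1}{1}(\tau)$.

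Integrating in time, I use continuity (Proposition \ref{prop:continuity}) at $t=0$, so that $\mathcal{A}(\boldsymbol{w}_{0h},\boldsymbol{w}_{0h})\le 4\DGnorm{\boldsymbol{w}_{0h}}^2$, and coercivity at time $t$ for the lower bound. This gives a linear integral inequality for $\etwonorm{\boldsymbol{w}_h^{(1)}}^2$ with kernel $2q_\infty^2\CGN{4}^4\Cen{1}{1}(\tau)$. The classical Gr\"onwall lemma, with a nondecreasing inhomogeneity, then produces \eqref{eq:energy_DG}, valid on $(0,\hat t)$ where $\Cen{1}{1}$ is finite.
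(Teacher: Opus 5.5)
Your proposal follows the paper's proof essentially step by step: for \eqref{eq:energy_L2} you test with $\boldsymbol{w}_h^{(1)}$, use the dissipative diagonal cubic block and coercivity, apply \eqref{eq:gn_adapt} with $p=\alpha=3$, and close with Perov for $\gamma=2$, $b=3l_\infty$, $c=2q_\infty^2\CGN{3}^6$; for \eqref{eq:energy_DG} you test with $\dot{\boldsymbol{w}}_h^{(1)}$ and use symmetry, coercivity/continuity, $\Lpnorm{u}{4}^4\le\CGN{4}^4\DGnorm{u}^2\Lpnorm{u}{2}^2$, the bound $\Lpnorm{\boldsymbol{w}_h^{(1)}}{2}^2\le\Cen{1}{1}$ and a linear Gr\"onwall argument. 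The only slip is in the Gagliardo--Nirenberg exponents: for $p=\alpha=3$ both Young exponents equal $2$, so the bound reads $\Lpnorm{u}{3}^3\le\frac12\varepsilon^2\DGnorm{u}^2+\frac12\CGN{3}^6\varepsilon^{-2}\Lpnorm{u}{2}^4$ (not $\varepsilon^4$ and power $4/3$), and choosing $\varepsilon^2=1/(2q_\infty)$ then gives exactly the remainder $2q_\infty^2\CGN{3}^6\Lpnorm{\boldsymbol{w}_h^{(1)}}{2}^4$ after multiplying by $2$, as in the paper.
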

\begin{proof}
\textbf{Proof of equation~\eqref{eq:energy_L2}}. Let us start by taking into account the first block of equations of problem \eqref{eq:general_block_strong}. With this purpose we take as test function $\boldsymbol{v}_h = [\boldsymbol{w}^{(1)}_h, \boldsymbol{0}]^T$ with $\boldsymbol{w}^{(1)}_h = [w_{1h},\dots,w_{n_1h}]^T$ and integrate in time between $0$ and $t$:
\begin{equation*}
    \int_{0}^{t} \left((\dot{\boldsymbol{w}}^{(1)}_h,\boldsymbol{w}^{(1)}_h)+ \mathcal{A}_1(\boldsymbol{w}^{(1)}_h,\boldsymbol{w}^{(1)}_h )\right)\dtau= \int_{0}^{t} \left((\mathbf{L}^{(1,1)} + (\mathbf{Q}^{(1,1,1)} + \mathbb{C}^{(1,1,1,1)} \boldsymbol{w}_h^{(1)}) \boldsymbol{w}_h^{(1)}) \boldsymbol{w}_h^{(1)}  + \boldsymbol{\gamma}^{(1)},\boldsymbol{w}_h^{(1)}\right) \dtau.
\end{equation*}
Then, we can use the $L^\infty$-bounds of the tensors $\mathbf{L}$ and $\mathbf{Q}$ to bound the RHS
\begin{align*}
    ((\mathbf{L}^{(1,1)} 
    + & \mathbf{Q}^{(1,1,1)} \boldsymbol{w}_h^{(1)}) \boldsymbol{w}_h^{(1)}  
    + \boldsymbol{\gamma}^{(1)},\boldsymbol{w}_h^{(1)}) 
    \leq l_{\infty} \Lpnorm{\boldsymbol{w}_{h}^{(1)}}{2}^2 
    + q_{\infty}    \Lpnorm{\boldsymbol{w}_{h}^{(1)}}{3}^3 
    + \Lpnorm{\boldsymbol{\gamma}^{(1)}}{2}\Lpnorm{\boldsymbol{w}_{h}^{(1)}}{2}.
\end{align*}
Then we can integrate by parts $(\dot{\boldsymbol{w}}^{(1)}_h,\boldsymbol{w}^{(1)}_h)$ and use Assumption~\ref{ass:c_structure} and Proposition~\ref{prop:coercivity} to bound from below the LHS
\begin{align*}
     \Lpnorm{\boldsymbol{w}_{h}^{(1)}(t)}{2}^2 + 
     & \int_{0}^{t} \left( 
     \DGnorm{\boldsymbol{w}_{h}^{(1)}(\tau)}^2 + 
     2 \sum_{j=1}^{n_1} c_j^{(1)} 
     \Lpnormsc{w_{jh}^{(1)}(\tau)}{4}^4\right)\dtau \\ \leq & 
     \Lpnorm{\boldsymbol{w}_{0h}^{(1)}}{2}^2 + 
     \int_{0}^{t}\left((2l_{\infty}+\varepsilon_{\gamma}^{-1}) 
     \Lpnorm{\boldsymbol{w}_{h}^{(1)}(\tau)}{2}^2 + 2q_{\infty} 
     \Lpnorm{\boldsymbol{w}_{h}^{(1)}(\tau)}{3}^3 + \varepsilon_{\gamma}
     \Lpnorm{\boldsymbol{\gamma}^{(1)}}{2}^2 \right) \dtau.
\end{align*}
Then, we use the inequality \eqref{eq:gn_adapt}, with $p=3$ and $\alpha=3$. We subtract the DG-norm on the LHS, choosing to simplify $\varepsilon_{\gamma} = 1/l_{\infty}$ and $\varepsilon_{3,3} = 1/\sqrt{2 q_\infty}$. Then we obtain
\begin{align*}
    \eonenorm{\boldsymbol{w}_{h}^{(1)}(t)}^2 = & \Lpnorm{\boldsymbol{w}_{h}^{(1)}(t)}{2}^2 
    + \int_{0}^{t} \left( \frac{1}{2}
    \DGnorm{\boldsymbol{w}_{h}^{(1)}(\tau)}^2 + 2 \sum_{j=1}^{n_1} c_j^{(1)} 
    \Lpnormsc{w_{jh}^{(1)}(\tau)}{4}^4\right)\dtau \\ \leq & \Lpnorm{\boldsymbol{w}_{0h}^{(1)}}{2}^2 + \dfrac{t}{l_{\infty}}\Lpnorm{\boldsymbol{\gamma}^{(1)}}{2}^2 +  3l_{\infty} \int_{0}^{t} \Lpnorm{\boldsymbol{w}_{h}^{(1)}(\tau)}{2}^2 \dtau + 2q_{\infty}^2\CGN{3}^6  \int_{0}^{t}\Lpnorm{\boldsymbol{w}_{h}^{(1)}(\tau)}{2}^4  \dtau,
\end{align*}
where we used the energy functional defined in \eqref{eq:enorm1}. Finally, using the Perov inequality in Proposition~\ref{prop:perov}, we obtain the estimate in equation~\eqref{eq:energy_L2}.
\bigskip
\\
\textbf{Proof of equation~\eqref{eq:energy_DG}}.
Take $\boldsymbol{v}_h=[\dot{\boldsymbol{w}}_h^{(1)},\boldsymbol{0}]^T$ in the first block and integrate over $(0,t)$. Since $\mathcal A_1$ is symmetric,
\begin{equation*}
\int_0^t
\mathcal{A}_1\left(
\boldsymbol{w}_h^{(1)}(\tau),
\dot{\boldsymbol{w}}_h^{(1)}(\tau)
\right)\,\dtau
=
\frac12\mathcal{A}_1\left(
\boldsymbol{w}_h^{(1)}(t),
\boldsymbol{w}_h^{(1)}(t)
\right)
-
\frac12\mathcal{A}_1\left(
\boldsymbol{w}_{0h}^{(1)},
\boldsymbol{w}_{0h}^{(1)}
\right).
\end{equation*}
We estimate the linear and forcing terms by Young's inequality:
\begin{align*}
l_\infty
\Lpnorm{\boldsymbol{w}_h^{(1)}}{2}
\Lpnorm{\dot{\boldsymbol{w}}_h^{(1)}}{2}
&\leq
\frac{1}{4}
\Lpnorm{\dot{\boldsymbol{w}}_h^{(1)}}{2}^2
+
l_\infty^2
\Lpnorm{\boldsymbol{w}_h^{(1)}}{2}^2,
\\
\Lpnorm{\boldsymbol{\gamma}^{(1)}}{2}
\Lpnorm{\dot{\boldsymbol{w}}_h^{(1)}}{2}
&\leq
\frac{1}{4}
\Lpnorm{\dot{\boldsymbol{w}}_h^{(1)}}{2}^2
+
\Lpnorm{\boldsymbol{\gamma}^{(1)}}{2}^2.
\end{align*}
Moreover, Hölder's and Young's inequalities yield
\begin{align*}
\left|
\left(
(\mathbf{Q}^{(1,1,1)}\boldsymbol{w}_h^{(1)})
\boldsymbol{w}_h^{(1)},
\dot{\boldsymbol{w}}_h^{(1)}
\right)
\right|
&\leq
\frac{1}{2}\Lpnorm{\dot{\boldsymbol{w}}_h^{(1)}}{2}^2
+
q_\infty^2
\frac{1}{2}\Lpnorm{\boldsymbol{w}_h^{(1)}}{4}^4
\\
&\leq
\frac{1}{2}\Lpnorm{\dot{\boldsymbol{w}}_h^{(1)}}{2}^2
+
\frac{1}{2}q_\infty^2\CGN{4}^{4}
\DGnorm{\boldsymbol{w}_h^{(1)}}^2
\Lpnorm{\boldsymbol{w}_h^{(1)}}{2}^2.
\end{align*}
Using Assumption~\ref{ass:c_structure}, coercivity, and continuity, we infer
\begin{align*}
\etwonorm{\boldsymbol{w}_{h}^{(1)}(t)}^2
:= \frac{1}{4} \DGnorm{\boldsymbol{w}_{h}^{(1)}(t)}^2
& +\sum_{j=1}^{n_1}c_j^{(1)}
\Lpnormsc{w_{jh}^{(1)}(t)}{4}^4
+\frac12\int_0^t
\Lpnorm{\dot{\boldsymbol{w}}_{h}^{(1)}(\tau)}{2}^2\,\dtau
\\
& \leq
2\DGnorm{\boldsymbol{w}_{0h}^{(1)}}^2
+c_\infty\Lpnorm{\boldsymbol{w}_{0h}^{(1)}}{4}^4
+t\Lpnorm{\boldsymbol{\gamma}^{(1)}}{2}^2
+l_\infty^2\int_0^t
\Lpnorm{\boldsymbol{w}_{h}^{(1)}(\tau)}{2}^2\,\dtau \\ & +\frac{1}{2}q_\infty^2\CGN{4}^{4}
\int_0^t
\DGnorm{\boldsymbol{w}_{h}^{(1)}(\tau)}^2
\Lpnorm{\boldsymbol{w}_{h}^{(1)}(\tau)}{2}^2\,\dtau,
\end{align*}
where we used the energy functional defined in \eqref{eq:enorm2}.
Using $\Lpnorm{\boldsymbol{w}_{h}^{(1)}(\tau)}{2}^2\leq\Cen{1}{1}(\tau)$ together with
$\DGnorm{\boldsymbol{w}_{h}^{(1)}(\tau)}^2\leq4\etwonorm{\boldsymbol{w}_{h}^{(1)}(\tau)}^2$, we obtain
\begin{equation*}
\etwonorm{\boldsymbol{w}_{h}^{(1)}(t)}^2
\leq 
2\DGnorm{\boldsymbol{w}_{0h}^{(1)}}^2
+c_\infty\Lpnorm{\boldsymbol{w}_{0h}^{(1)}}{4}^4
+t\Lpnorm{\boldsymbol{\gamma}^{(1)}}{2}^2
+l_\infty^2\int_0^t\Cen{1}{1}(\tau)\,\dtau +
2q_\infty^2\CGN{4}^{4}
\int_0^t
\Cen{1}{1}(\tau)
\etwonorm{\boldsymbol{w}_{h}^{(1)}(\tau)}^2\,\dtau.
\end{equation*}
Gronwall's inequality yields~\eqref{eq:energy_DG}.
\end{proof}
\begin{remark}
An extension to $d=3$ can be obtained by adapting the Gagliardo--Nirenberg exponents and imposing the corresponding admissibility conditions on the Lebesgue exponents. We do not pursue this extension here. For an application in $d=3$ to the Fisher--Kolmogorov model, we refer to
\cite{corti_discontinuous_2023}.
\end{remark}
\begin{remark}
We emphasize that Theorem~\ref{thm:stab_DG_1} is already a result of independent interest, beyond its role as the base case of the induction leading to Theorem~\ref{thm:stab_DG_p}. Indeed, the first block of the system~\eqref{eq:general_strong} reduces to a general nonlinear reaction–diffusion system with purely quadratic interactions. This general model contains, as particular cases, several classical models in mathematical biology and physical chemistry, such as:
(i) Smoluchowski-type coagulation models without secondary nucleation \cite{fornari_spatially-extended_2020}; (ii) reaction–diffusion Lotka–Volterra systems for competing or predator–prey populations \cite{he_effects_2013}; (iii) mass-action
chemical kinetics systems with bimolecular reactions \cite{smoller_shock_1994}; and (iv) epidemic models with spatial diffusion \cite{lotfi_partial_2014}. Theorem~\ref{thm:stab_DG_1} therefore provides, as a byproduct, an $L^2$- and DG-norm stability result for the semi-discrete PolyDG approximation of this whole class of models.
\end{remark}

\begin{theorem}[Stability result for the $p$-th block of populations]
\label{thm:stab_DG_p}
Let $d=2$, and let Assumptions~\ref{ass:c_structure}
and~\ref{ass:mesh} hold. Let $\boldsymbol{w}_h(t)\in\Wh$ be the
solution to problem~\eqref{eq:semidiscrete}. Assume that the stability
parameters $\eta_l$, defined in~\eqref{eq:penalty_def}, are chosen
sufficiently large. Then, for $t\in(0,\widehat{t}_p)$,
\begin{equation}
\label{eq:energy_L2_p}
\eonenorm{\boldsymbol{w}_{h}^{(p)}(t)}^2
\leq \frac{\left(\Lpnorm{\boldsymbol{w}_{0h}^{(p)}}{2}^2
+ \frac{t}{l_{\infty}} \Lpnorm{\boldsymbol{\gamma}^{(p)}}{2}^2 + K_1^{(p)}(t) \right) e^{\beta^{(p)}(0,t)}}{\left[
1 - 2 \left(
\Lpnorm{\boldsymbol{w}_{0h}^{(p)}}{2}^2
+
\frac{t}{l_{\infty}}
\Lpnorm{\boldsymbol{\gamma}^{(p)}}{2}^2
+
K_1^{(p)}(t)
\right)^2
\displaystyle\int_0^t
\left(
2q_{\infty}^2\CGN{3}^{6}
+
K_3^{(p)}(\tau)
\right)
e^{2\left(\beta^{(p)}(\tau,t)\right)}
\,\mathrm \dtau
\right]^{\frac12}
}
=: \Cen{1}{p}(t),
\end{equation}
where $\beta^{(p)}(t_1,t_2)
:=
3l_{\infty}(t_2-t_1)
+
\int_{t_1}^{t_2} K_2^{(p)}(\tau)\,\mathrm \dtau$ and the $K_j^{(p)}$ terms are defined as in \eqref{eq:K-coefficients}. The estimate holds provided that
\begin{equation*}
2
\left(
\Lpnorm{\boldsymbol{w}_{0h}^{(p)}}{2}^2
+
\frac{t}{l_{\infty}}
\Lpnorm{\boldsymbol{\gamma}^{(p)}}{2}^2
+
K_1^{(p)}(\hat{t}_p)
\right)^2
\int_0^{\hat{t}_p}
\left(
2q_{\infty}^2\CGN{3}^{6}
+
K_3^{(p)}(\tau)
\right)
e^{2\left(\beta^{(p)}(\tau,\hat{t}_p)\right)}
\,\mathrm \dtau
<1.
\end{equation*}
Moreover, the following estimate holds for $t \in (0,t_p^*)$
\begin{equation}
\etwonorm{\boldsymbol{w}_h^{(p)}(t)}^2
\;\leq\;
\frac{a^{(p)}(t)\exp\left(\displaystyle\int_{0}^{t} D^{(p)}_2(\tau)\,\dtau\right)}
{1 - a^{(p)}(t)\displaystyle\int_{0}^{t}\left(6 q_\infty C_{E_4}^4 + D^{(p)}_3(\tau)\right)
\exp\!\left(\displaystyle\int_{0}^{\tau} D^{(p)}_2(\xi)\mathrm{d}\xi\right)\dtau},
\label{eq:energy_DG_p}
\end{equation}
with
\begin{equation}
a^{(p)}(t) := 2\DGnorm{\boldsymbol{w}_{0h}^{(p)}}^2 + c_\infty\Lpnorm{\boldsymbol{w}_{0h}^{(p)}}{4}^4 + 6t\Lpnorm{\boldsymbol{\gamma}^{(p)}}{2}^2
+ \int_{0}^{t} 6 l_\infty \Cen{1}{p}(\tau)\,\dtau + D^{(p)}_1(t),
\end{equation}
provided that the following non-degeneracy condition holds
\begin{equation}
\left(2\DGnorm{\boldsymbol{w}_{0h}^{(p)}}^2 + 6t\Lpnorm{\boldsymbol{\gamma}^{(p)}}{2}^2
+ \int_{0}^{t_p^*} 6 l_\infty \Cen{1}{p}(\tau)\,\dtau + D^{(p)}_1(t_p^*)\right)\int_{0}^{t_p^*}\left(6 q_\infty C_{E_4}^4 + D^{(p)}_3(\tau)\right)
e^{\int_{0}^{\tau} D^{(p)}_2(\xi)\,\mathrm{d}\xi}\dtau \;<\; 1,
\label{eq:nondegeneracy_block_p}
\end{equation}
and with the $D_j^{(p)}$ coefficients as in \eqref{eq:D-coefficients}.
\end{theorem}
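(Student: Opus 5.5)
The proof goes by induction on the block index $p$, with Theorem~\ref{thm:stab_DG_1} as the base case. For block $p$ we assume that the $L^2$ bounds $\eonenorm{\boldsymbol{w}_h^{(q)}}^2\le \Cen{1}{q}$ and the dG bounds $\etwonorm{\boldsymbol{w}_h^{(q)}}^2\le \Cen{2}{q}$ hold for every $q<p$ on the common interval. We then repeat the two energy arguments of Theorem~\ref{thm:stab_DG_1}. The new ingredients are the coupling terms with preceding blocks; these are known quantities, and they feed into the coefficients $K_j^{(p)}$ and $D_j^{(p)}$.

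\textbf{$L^2$ estimate.} We test the $p$-th block of \eqref{eq:semidiscrete} with $\boldsymbol{v}_h=[\boldsymbol{0},\boldsymbol{w}_h^{(p)},\boldsymbol{0}]^T$ and integrate over $(0,t)$.
\begin{itemize}
\item The left-hand side is bounded from below by Proposition~\ref{prop:coercivity} and by the dissipative diagonal term of Assumption~\ref{ass:c_structure}.
\item The intra-block linear and quadratic terms, and the forcing, are handled exactly as for $p=1$.
\item For the cross terms, each monomial $(\mathbf{L}^{(p,q)}\boldsymbol{w}^{(q)},\boldsymbol{w}^{(p)})$, $(\mathbf{Q}^{(p,q,s)}\boldsymbol{w}^{(s)}\boldsymbol{w}^{(q)},\boldsymbol{w}^{(p)})$ and $(\mathbb{C}^{(p,q,s,r)}\cdots,\boldsymbol{w}^{(p)})$ is classified by its degree $m\in\{0,1,2,3\}$ in $\boldsymbol{w}^{(p)}$. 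By H\"older's inequality, the factors from earlier blocks go into $L^{6}$ or $L^{8}$ norms, and these are controlled via the Gagliardo--Nirenberg inequality \eqref{eq:gn_adapt} by $\DGnorm{\boldsymbol{w}_h^{(q)}}$ and $\Lpnorm{\boldsymbol{w}_h^{(q)}}{2}$. By the induction hypothesis these are bounded functions of time.
\item The factors in $\boldsymbol{w}^{(p)}$ are estimated in $L^3$, $L^4$ or $L^6$ and interpolated with \eqref{eq:gn_adapt}, choosing $\varepsilon_{p,\alpha}$ so that all dG contributions sum to at most $\frac12\DGnorm{\boldsymbol{w}_h^{(p)}}^2$ and are absorbed.
\end{itemize}
By Young's inequality each remaining contribution falls into one of three classes. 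The $m=0$ terms give a time-integrated source, collected in $K_1^{(p)}(t)$. The $m=1$ terms are linear in $\Lpnorm{\boldsymbol{w}_h^{(p)}}{2}^2$, with coefficient $K_2^{(p)}$. The $m\ge2$ terms are superlinear and are bounded by $\Lpnorm{\boldsymbol{w}_h^{(p)}}{2}^4$, with coefficient $2q_\infty^2\CGN{3}^6+K_3^{(p)}$.

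The terms with $m=3$ but mixed indices, such as $\mathbb{C}^{(p,p,p,q)}$ with $q<p$, are the delicate ones. They are quartic in $\boldsymbol{w}^{(p)}$ after testing. I would try to absorb them partly into the dissipative $L^4$ term; if that fails, I would use \eqref{eq:gn_adapt} with exponents $\alpha<2p/(p-2)$ so that the power of $\Lpnorm{\boldsymbol{w}^{(p)}_h}{2}$ remains at most $4$, with an $L^\infty$-in-time factor from earlier blocks inside $K_3^{(p)}$. This yields an integral inequality of the form \eqref{eq:perov_hyp} with $\gamma=2$. Here $a(t)=\Lpnorm{\boldsymbol{w}_{0h}^{(p)}}{2}^2+\frac{t}{l_\infty}\Lpnorm{\boldsymbol{\gamma}^{(p)}}{2}^2+K_1^{(p)}(t)$ is non-decreasing, $b=3l_\infty+K_2^{(p)}$ and $c=2q_\infty^2\CGN{3}^6+K_3^{(p)}$. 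Proposition~\ref{prop:perov} then gives \eqref{eq:energy_L2_p} under the stated smallness condition.

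\textbf{dG estimate.} We test with $[\boldsymbol{0},\dot{\boldsymbol{w}}_h^{(p)},\boldsymbol{0}]^T$. Symmetry of $\mathcal A_p$ together with Propositions~\ref{prop:coercivity} and~\ref{prop:continuity} controls the diffusion part, and the diagonal cubic term becomes $\frac{\mathrm d}{\mathrm dt}\frac14\sum_j c_j^{(p)}\|w_{jh}^{(p)}\|_{L^4}^4$. Every other right-hand side term $(f,\dot{\boldsymbol{w}}_h^{(p)})$ is bounded by $\frac{1}{12}\Lpnorm{\dot{\boldsymbol{w}}_h^{(p)}}{2}^2+3\Lpnorm{f}{2}^2$; this splitting into six pieces explains the factors $6$ in $a^{(p)}$. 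Then $\Lpnorm{f}{2}^2$ is bounded using $L^{2k}$ norms, \eqref{eq:gn_adapt} and the bound $\Cen{1}{p}$ just obtained. The result is an inequality for $y=\etwonorm{\boldsymbol{w}_h^{(p)}}^2$ with a source $D_1^{(p)}$, a linear coefficient $D_2^{(p)}$, and a quadratic coefficient $6q_\infty C_{E_4}^4+D_3^{(p)}$. The quadratic coefficient comes from $\|\boldsymbol{w}^{(p)}\|_{L^4}^4\lesssim \DGnorm{\cdot}^2\Lpnorm{\cdot}{2}^2$ and from $\|\boldsymbol{w}^{(p)}\|_{L^6}^6\lesssim\DGnorm{\cdot}^4\Lpnorm{\cdot}{2}^2$ in the cubic cross terms. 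Applying Proposition~\ref{prop:perov} once more, again with $\gamma=2$, gives \eqref{eq:energy_DG_p} under \eqref{eq:nondegeneracy_block_p}.

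\textbf{Main obstacle.} The hardest step is the exponent bookkeeping for the mixed cubic terms. We must make sure that every power of $\DGnorm{\boldsymbol{w}_h^{(p)}}$ can be absorbed (exponent $<2$ after Young's inequality) and that every power of the $L^2$ or energy norm is at most $2\gamma$, which keeps the inequality of Perov type with $\gamma=2$. The admissibility constraint $\alpha<2p/(p-2)$ in \eqref{eq:gn_adapt} dictates which Lebesgue exponents can be used.
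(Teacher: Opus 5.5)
Your dG part is in line with the paper's: you test with $[\boldsymbol{0},\dot{\boldsymbol{w}}_h^{(p)},\boldsymbol{0}]^T$ and close with Proposition~\ref{prop:perov} for $\gamma=2$ in $\etwonorm{\boldsymbol{w}_h^{(p)}}^2$. The $L^2$ part, however, has a genuine gap. You aim at a Perov inequality with $\gamma=2$, with all superlinear terms bounded by $\Lpnorm{\boldsymbol{w}_h^{(p)}}{2}^4$. That is not what \eqref{eq:energy_L2_p} states, and it is not achievable under the hypotheses. The exponent $\frac12$ in the denominator, the factor $2(\cdot)^2$ and the weight $e^{2\beta^{(p)}}$ are the $\gamma=3$ case of \eqref{eq:perov}, and $K_3^{(p)}=A_{4,3}^{(p)}$ multiplies $\Lpnorm{\boldsymbol{w}_h^{(p)}}{2}^6$. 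The obstruction is a cross cubic term such as $\mathbb{C}^{(p,p,p,q)}$ with $q<p$. It is quadratic in $\boldsymbol{w}^{(p)}$ inside the reaction, so after testing it is $\int w^{(q)}(w^{(p)})^3$, cubic rather than quartic in $\boldsymbol{w}^{(p)}$; the only quartic term is the dissipative diagonal one. Neither of your remedies works:
\begin{itemize}
\item Absorbing it into $2\sum_j c_j^{(p)}\Lpnormsc{w_{jh}^{(p)}}{4}^4$ requires $\min_j c_j^{(p)}>0$, but Assumption~\ref{ass:c_structure} allows $c_j^{(p)}=0$. In \eqref{eq:doublefkpp}, for instance, $-w_1w_2^2$ is present while $c_1^{(2)}=0$.
\item Interpolation cannot keep the power at $4$. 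Bound the term by $\|w^{(q)}\|_{L^s}\|w^{(p)}\|_{L^r}^3$ with $\frac1s+\frac3r=1$, apply \eqref{eq:gn_adapt}, and absorb the dG part; what remains is $\Lpnorm{\boldsymbol{w}_h^{(p)}}{2}^{12/(6-r)}$. This is $\le 4$ only for $r=3$, $s=\infty$, which needs $L^\infty$ control of $\boldsymbol{w}_h^{(q)}$, and dG bounds do not give that in two dimensions. Your own choices $s=6$ and $s=8$ give powers $5$ and $\frac{14}{3}$. The condition $\alpha<2p/(p-2)$ only guarantees that the dG part can be absorbed; it does not bound the $L^2$ power.
\end{itemize}

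The missing step is to accept the sixth power and apply Perov with $\gamma=3$. The paper proceeds as follows:
\begin{itemize}
\item It bounds the cross cubic term by $c_\infty\Lpnorm{\boldsymbol{w}_h^{(q)}}{4}\Lpnorm{\boldsymbol{w}_h^{(p)}}{4}^3$, with $\Lpnorm{\boldsymbol{w}_h^{(q)}}{4}\le 2C_{E_4}\sqrt{\Cen{2}{q}}$ from the induction hypothesis and the Sobolev--Poincar\'e inequality.
\item It applies \eqref{eq:gn_adapt} with $(p,\alpha)=(4,3)$, obtaining $\frac1{10}\DGnorm{\boldsymbol{w}_h^{(p)}}^2+A_{4,3}^{(p)}\Lpnorm{\boldsymbol{w}_h^{(p)}}{2}^6$.
\item To have a single superlinear power, it lifts the intra-block term $2q_\infty^2\CGN{3}^6\Lpnorm{\boldsymbol{w}_h^{(p)}}{2}^4$ via $x^4\le\frac4{27}+x^6$. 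This produces the $\frac{8}{27}q_\infty^2\CGN{3}^6\,t$ in $K_1^{(p)}$.
\item The sub-quadratic powers $1,\frac45,\frac23$, coming from terms of degree one in $\boldsymbol{w}^{(p)}$, are split into a constant plus a square. They therefore feed both $K_1^{(p)}$ and $K_2^{(p)}$, not only $K_1^{(p)}$.
\item Proposition~\ref{prop:perov} with $\gamma=3$, $b=3l_\infty+K_2^{(p)}$ and $c=2q_\infty^2\CGN{3}^6+K_3^{(p)}$ then gives the form of \eqref{eq:energy_L2_p}.
\end{itemize}
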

\begin{proof}
Now, we prove the induction step on the population $p$, assuming that the estimates for the previous $p-1$ blocks have already been proved. 
\bigskip
\\
\textbf{Proof of equation~\eqref{eq:energy_L2_p}}. First, we take as test function $\boldsymbol{v}_h = [\boldsymbol{0},\boldsymbol{w}^{(p)}_h, \boldsymbol{0}]^T$ and integrate in time between $0$ and $t:$
\begin{align*}
    & \int_{0}^{t} \big((\dot{\boldsymbol{w}}^{(p)}_h(\tau),\boldsymbol{w}^{(p)}_h(\tau)) + \mathcal{A}_p(\boldsymbol{w}^{(p)}_h(\tau),\boldsymbol{w}^{(p)}_h(\tau) )\big)\dtau \\ & \leq \int_{0}^{t} \left((\mathbf{L}^{(p,p)} + (\mathbf{Q}^{(p,p,p)} + \mathbb{C}^{(p,p,p,p)} \boldsymbol{w}_h^{(p)}(\tau)) \boldsymbol{w}_h^{(p)}(\tau)) \boldsymbol{w}_h^{(p)}(\tau)  + \boldsymbol{\gamma}^{(p)},\boldsymbol{w}_h^{(p)}(\tau)\right) \dtau 
    \\ & 
    + \int_{0}^{t} \sum_{q=1}^{p-1} \left(l_\infty
    \Lpnorm{\boldsymbol{w}_h^{(q)}(\tau)}{2}
    \Lpnorm{\boldsymbol{w}_h^{(p)}(\tau)}{2} + q_\infty
    \Lpnorm{\boldsymbol{w}_h^{(q)}(\tau)}{3}
    \Lpnorm{\boldsymbol{w}_h^{(p)}(\tau)}{3}^2 + c_\infty
    \Lpnorm{\boldsymbol{w}_h^{(q)}(\tau)}{4}
    \Lpnorm{\boldsymbol{w}_h^{(p)}(\tau)}{4}^3 \right) \dtau 
    \\ & + \int_{0}^{t}\sum_{q,s=1}^{p-1} \left(q_\infty
    \Lpnorm{\boldsymbol{w}_h^{(s)}(\tau)}{3}
    \Lpnorm{\boldsymbol{w}_h^{(q)}(\tau)}{3}
    \Lpnorm{\boldsymbol{w}_h^{(p)}(\tau)}{3} +c_\infty
    \Lpnorm{\boldsymbol{w}_h^{(s)}(\tau)}{4}
    \Lpnorm{\boldsymbol{w}_h^{(q)}(\tau)}{4}
    \Lpnorm{\boldsymbol{w}_h^{(p)}(\tau)}{4}^2 \right) \dtau
    \\ & + \int_{0}^{t}\sum_{q,s,r=1}^{p-1} \left(c_\infty 
    \Lpnorm{\boldsymbol{w}_h^{(r)}(\tau)}{4}
    \Lpnorm{\boldsymbol{w}_h^{(s)}(\tau)}{4}
    \Lpnorm{\boldsymbol{w}_h^{(q)}(\tau)}{4}
    \Lpnorm{\boldsymbol{w}_h^{(p)}(\tau)}{4}\right) \dtau.
\end{align*}
Using the previous steps of the induction and repeating the steps of the proof of equation \eqref{eq:energy_L2}, we obtain
\begin{align*}
   & \Lpnorm{\boldsymbol{w}_{h}^{(p)}(t)}{2}^2 + 
      \int_{0}^{t} \left( 
     \frac{1}{2} \DGnorm{\boldsymbol{w}_{h}^{(p)}(\tau)}^2 + 
     2 \sum_{j=1}^{n_p} c_j^{(p)} 
     \Lpnormsc{w_{jh}^{(p)}(\tau)}{4}^4\right)\dtau \\  & \qquad \leq \Lpnorm{\boldsymbol{w}_{0h}^{(p)}}{2}^2 + \dfrac{t}{l_{\infty}}\Lpnorm{\boldsymbol{\gamma}^{(p)}}{2}^2 +  3l_{\infty} \int_{0}^{t} \Lpnorm{\boldsymbol{w}_{h}^{(p)}(\tau)}{2}^2 \dtau + 2q_{\infty}^2\CGN{3}^6  \int_{0}^{t}\Lpnorm{\boldsymbol{w}_{h}^{(p)}(\tau)}{2}^4  \dtau
    \\ & \qquad + \int_{0}^{t} \sum_{q=1}^{p-1} \left(l_\infty \sqrt{\Cen{1}{q}(\tau)}
    \Lpnorm{\boldsymbol{w}_h^{(p)}(\tau)}{2} + 2q_\infty \sqrt{\Cen{2}{q}(\tau)} C_{E_3} 
    \Lpnorm{\boldsymbol{w}_h^{(p)}(\tau)}{3}^2 + 2c_\infty \sqrt{\Cen{2}{q}(\tau)} C_{E_4}
    \Lpnorm{\boldsymbol{w}_h^{(p)}(\tau)}{4}^3 \right) \dtau 
    \\ & \qquad + \int_{0}^{t}\sum_{q,s=1}^{p-1} \left(4q_\infty \sqrt{\Cen{2}{q}(\tau) \Cen{2}{s}(\tau)} C_{E_3}^2 
    \Lpnorm{\boldsymbol{w}_h^{(p)}(\tau)}{3} + 4c_\infty \sqrt{\Cen{2}{q}(\tau) \Cen{2}{s}(\tau)} C_{E_4}^2  
    \Lpnorm{\boldsymbol{w}_h^{(p)}(\tau)}{4}^2 \right) \dtau
    \\ & \qquad + \int_{0}^{t}\left(\sum_{q,s,r=1}^{p-1} 8c_\infty \sqrt{\Cen{2}{q}(\tau) \Cen{2}{s}(\tau) \Cen{2}{r}(\tau)} C_{E_4}^3  \Lpnorm{\boldsymbol{w}_h^{(p)}(\tau)}{4} \right) \dtau,
\end{align*}
where $C_{E_k}$ is the Sobolev-Poincaré embedding \cite[Theorem~6.5]{di_pietro_hybrid_2020} constant for the $L^k(\Omega)$ space. By applying
the inequality \eqref{eq:gn_adapt} to the
terms involving the $L^3(\Omega)$ and $L^4(\Omega)$ norms of
$\boldsymbol{w}_h^{(p)}$, with the parameters
$\varepsilon_{p,\alpha}$ chosen so that each contribution involving
$\DGnorm{\boldsymbol{w}_h^{(p)}}^2$ is equal to
$\frac{1}{10}\DGnorm{\boldsymbol{w}_h^{(p)}}^2$, we obtain
\begin{align}
\label{eq:partial_1}
    \eonenorm{{\boldsymbol{w}_{h}^{(p)}(t)}}^2 \leq & \Lpnorm{\boldsymbol{w}_{0h}^{(p)}}{2}^2 + \dfrac{t}{l_{\infty}}\Lpnorm{\boldsymbol{\gamma}^{(p)}}{2}^2 +  3l_{\infty} \int_{0}^{t} \Lpnorm{\boldsymbol{w}_{h}^{(p)}(\tau)}{2}^2 \dtau + 2q_{\infty}^2\CGN{3}^6  \int_{0}^{t}\Lpnorm{\boldsymbol{w}_{h}^{(p)}(\tau)}{2}^4  \dtau
    \\ + & \int_{0}^{t} \left(
    A_{2,1}^{(p)}(\tau) \Lpnorm{\boldsymbol{w}_h^{(p)}(\tau)}{2} + 
    (A_{3,2}^{(p)}(\tau)+A_{4,2}^{(p)}(\tau))\Lpnorm{\boldsymbol{w}_h^{(p)}(\tau)}{2}^{2} + A_{4,3}^{(p)}(\tau) 
    \Lpnorm{\boldsymbol{w}_h^{(p)}(\tau)}{2}^{6} \right) \dtau 
    \\ + & \int_{0}^{t} \left( A_{3,1}^{(p)}(\tau)
\Lpnorm{\boldsymbol{w}_h^{(p)}(\tau)}{2}^{\frac{4}{5}} + A_{4,1}^{(p)}(\tau) \Lpnorm{\boldsymbol{w}_h^{(p)}(\tau)}{2}^{\frac{2}{3}} \right) \dtau.
\end{align}
Here, the non-negative functions $A_{k,\alpha}^{(p)}$ are defined as in Equation \eqref{eq:A-coefficients}. Now, to handle the subquadratic terms, we notice that $x^\beta\leq (1-\frac{\beta}{2})(\frac{\beta}{2})^{\frac{\beta}{2-\beta}}+x^2$ for every $\beta\in(0,1]$. On the other hand we observe that $x^4 \leq 4/27 + x^6$ for all $x\in \mathbb{R}$. Using them in \eqref{eq:partial_1}, we derive
\begin{align*}
    \eonenorm{{\boldsymbol{w}_{h}^{(p)}(t)}}^2 \leq & \Lpnorm{\boldsymbol{w}_{0h}^{(p)}}{2}^2 + \dfrac{t}{l_{\infty}}\Lpnorm{\boldsymbol{\gamma}^{(p)}}{2}^2 + K^{(p)}_1(t) + \int_{0}^{t} (3l_\infty + K^{(p)}_2(\tau))\Lpnorm{\boldsymbol{w}_h^{(p)}(\tau)}{2}^{2} \dtau
    \\ + & \int_{0}^{t} (2q_{\infty}^2\CGN{3}^6+K^{(p)}_3(\tau)) 
    \Lpnorm{\boldsymbol{w}_h^{(p)}(\tau)}{2}^{6} \dtau,
\end{align*}
where, we define the functions $K^{(p)}_1(t)$, $K^{(p)}_2(t)$, $K^{(p)}_3(t)$ as in \eqref{eq:K-coefficients} such that they collect the quantities connected with the interaction of the $p$-th population with the previous blocks. The thesis follows from the application of Perov's inequality in Proposition \ref{prop:perov}.

\textbf{Proof of equation~\eqref{eq:energy_DG_p}}.
Take $\boldsymbol{v}_h=[\boldsymbol{0},\dot{\boldsymbol{w}}_h^{(p)},\boldsymbol{0}]^T$ in the $p$-th block and integrate over $(0,t)$. Since $\mathcal A_p$ is symmetric,
\begin{equation*}
\int_0^t \mathcal{A}_p\left(\boldsymbol{w}_h^{(p)}(\tau), \dot{\boldsymbol{w}}_h^{(p)}(\tau)\right)\,\dtau
= \frac{1}{2} \mathcal{A}_p\left(\boldsymbol{w}_h^{(p)}(t),
\boldsymbol{w}_h^{(p)}(t)\right)
- \frac{1}{2} \mathcal{A}_p\left(\boldsymbol{w}_{0h}^{(p)},
\boldsymbol{w}_{0h}^{(p)}\right).
\end{equation*}
Proceeding as in the proof of the first block and using the H\"older and Young's inequalities on the reaction terms we obtain 
\begin{align*}
    \int_{0}^{t} & \frac{1}{2}\Lpnorm{\dot{\boldsymbol{w}}^{(p)}_h(\tau)}{2}^2 \dtau + \frac{1}{2}\mathcal{A}_p\left(\boldsymbol{w}_h^{(p)}(t),
\boldsymbol{w}_h^{(p)}(t)\right) + \sum_{j=1}^{n_p}c_j^{(p)}
\Lpnormsc{w_{jh}^{(p)}(t)}{4}^4 \\ & \leq \frac{1}{2}\mathcal{A}_p\left(\boldsymbol{w}_{0h}^{(p)},
\boldsymbol{w}_{0h}^{(p)}\right) + \int_{0}^{t} \left( 6 l_\infty
    \Lpnorm{\boldsymbol{w}_h^{(p)}(\tau)}{2}^2 + 6 q_\infty
    \Lpnorm{\boldsymbol{w}_h^{(p)}(\tau)}{4}^4 + 6
    \Lpnorm{\boldsymbol{\gamma}^{(p)}}{2}^2 \right) \dtau 
    \\ & 
    + \int_{0}^{t} \sum_{q=1}^{p-1} \left( 6l_\infty^2
    \Lpnorm{\boldsymbol{w}_h^{(q)}(\tau)}{2}^2 + 6 q_\infty^2
    \Lpnorm{\boldsymbol{w}_h^{(q)}(\tau)}{4}^2
    \Lpnorm{\boldsymbol{w}_h^{(p)}(\tau)}{4}^2 + 6 c_\infty^2
    \Lpnorm{\boldsymbol{w}_h^{(q)}(\tau)}{4}^2
    \Lpnorm{\boldsymbol{w}_h^{(p)}(\tau)}{8}^4 \right) \dtau 
    \\ & + \int_{0}^{t}\sum_{q,s=1}^{p-1} \left(4 q_\infty^2
    \Lpnorm{\boldsymbol{w}_h^{(s)}(\tau)}{4}^2
    \Lpnorm{\boldsymbol{w}_h^{(q)}(\tau)}{4}^2
     + 4 c_\infty^2
    \Lpnorm{\boldsymbol{w}_h^{(s)}(\tau)}{8}^2
    \Lpnorm{\boldsymbol{w}_h^{(q)}(\tau)}{8}^2
    \Lpnorm{\boldsymbol{w}_h^{(p)}(\tau)}{4}^2 \right) \dtau
    \\ & + \int_{0}^{t}\sum_{q,s,r=1}^{p-1} 2 c_\infty^2 
    \Lpnorm{\boldsymbol{w}_h^{(r)}(\tau)}{6}^2
    \Lpnorm{\boldsymbol{w}_h^{(s)}(\tau)}{6}^2
    \Lpnorm{\boldsymbol{w}_h^{(q)}(\tau)}{6}^2  \dtau.
\end{align*}
Using Assumption~\ref{ass:c_structure}, coercivity, and continuity, the estimates of the previous $p-1$ blocks, and the Sobolev-Poincaré inequality \cite[Theorem~6.5]{di_pietro_hybrid_2020}, we derive
\begin{align*}
   \etwonorm{\boldsymbol{w}_h^{(p)}(t)}^2 & \leq 2
    \DGnorm{\boldsymbol{w}_{0h}^{(p)}}^2 +c_\infty\Lpnorm{\boldsymbol{w}_{0h}^{(p)}}{4}^4 + 6 t
    \Lpnorm{\boldsymbol{\gamma}^{(p)}}{2}^2 + \int_{0}^{t} 6 l_\infty \Cen{1}{p}(\tau) \dtau + D^{(p)}_1(t) 
    \\ & + \int_{0}^t D^{(p)}_2(\tau) \etwonorm{\boldsymbol{w}_h^{(p)}(\tau)}^2 \dtau + \int_{0}^{t} \left( 6 q_\infty C_{E_4}^2 + D^{(p)}_3(\tau) \right) \etwonorm{\boldsymbol{w}_h^{(p)}(\tau)}^4 \dtau.
\end{align*}
where we exploit the definition of $\etwonorm{\cdot}$ and the functions $D^{(p)}_1(t)$, $D^{(p)}_2(t)$, $D^{(p)}_3(t)$ are defined as in \eqref{eq:D-coefficients}. The thesis follows from the application of the Perov inequality in Proposition~\ref{prop:perov}.
\end{proof}
\begin{remark}
The proof of Theorem~\ref{thm:stab_DG_p} cannot be trivially extended to the three-dimensional case $d=3$. The obstruction arises because in three
dimensions the critical Sobolev exponent is $6$, so that $H^1(\Omega)\hookrightarrow L^q(\Omega)$
only holds for $q\in[2,6]$, and $H^1(\Omega)\not\hookrightarrow L^8(\Omega)$.
\end{remark}

\begin{remark}[Dependence on previous blocks]
We emphasize that both stability estimates in Theorem~\ref{thm:stab_DG_p} for the $p$-th block
are not self-contained, but depend explicitly on the stability estimates of \emph{all} the previous $p-1$
blocks. The same cumulative dependence affects the time of degeneracy of the estimates. Both the definitions of
$\hat{t}_p$ and $t_p^*$ involve the functions $K_i^{(p)}$ and $D_i^{(p)}$, then the admissible time interval for the $p$-th block is, in general, no larger than that of every previous block. This reflects the hierarchical
nature of the block structure introduced in Assumption~\ref{ass:c_structure}: instabilities or blow-up phenomena occurring in an early block of the hierarchy necessarily propagate to, and restrict the validity horizon of, all higher blocks that interact with it, then we have that $\min_{p=1,\dots,p_M}\{\hat{t_p},t^*_p\} = t^*_{p_M}$.
\end{remark}
\begin{remark}[Existence and uniqueness of the discrete solution]
Since $\Wh$ has finite dimension, problem~\eqref{eq:semidiscrete} is equivalent, upon choosing a basis, to a system of ODEs with polynomial (hence locally Lipschitz) right-hand side. By the Cauchy--Lipschitz theorem, this system admits a unique local-in-time solution $\boldsymbol{w}_h \in C^1([0,\tau);\mathbf{W}_h^{\mathrm{DG}})$ for some $\tau>0$, which can be extended to a maximal interval
$[0,T_{\max})$. The a priori estimates of Theorems~\ref{thm:stab_DG_1} and~\ref{thm:stab_DG_p} rule out this blow-up scenario on
the respective intervals of validity, so that the unique discrete solution $\boldsymbol{w}_h(t)$ is guaranteed to exist on $(0,t^*_{p_M})$.
\end{remark}
\section{Error analysis of the semi-discrete formulation}
\label{sec:error_analysis}
In this section, we discuss the a priori error estimates for the solution of the semi-discrete problem in \eqref{eq:semidiscrete}. First of all, we introduce the additional norm defined as
\begin{equation}
\label{eq:triplenorm}
    \TDGnorm{\boldsymbol{u}}^2 = \DGnorm{\boldsymbol{u}}^2 + \|\eta^{-1/2}\avg{\mathbb{D}:\nabla_h\boldsymbol{u}}\|^2_{\mathbf{L}^2(\facesinternal\cup\facesD)}, \qquad \forall \boldsymbol{u}\in \mathbf{H}^2(\partition).
\end{equation}
To extend the bilinear forms of \eqref{eq:bilinear_form_def} to the space of continuous solutions we need further regularity requirements. We assume element-wise $H^2$-regularity of the solution concentration together with the continuity of the flow across the interfaces $F\in\facesinternal$ for all time $t\in(0,T]$. Moreover, the following continuity result holds.
\begin{proposition}[Continuity]
\label{prop:continuity_2}
The bilinear form $\mathcal{A}$, defined in Eq.~\eqref{eq:bilinear_form_def}, is continuous, namely it satisfies
\begin{equation}
\label{eq:continuity_2}
\left|\mathcal{A}(\boldsymbol{u},\boldsymbol{v}_h)\right|
\leq 2\TDGnorm{\boldsymbol{u}}\|\boldsymbol{v}_h\|_\mathrm{DG}
\qquad \forall\,\boldsymbol{u}\in H^2(\partition),\boldsymbol{v}_h\in\Wh.
\end{equation}
\end{proposition}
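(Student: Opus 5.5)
We follow the proof of Proposition~\ref{prop:continuity}, but now only the test function is discrete, so the inverse trace inequality can be used on one side only. The missing control on $\boldsymbol{u}$ is supplied by the extra face term built into $\TDGnorm{\cdot}$.

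First, we use the decomposition \eqref{eq:decomposition_A} of $\mathcal{A}$ into $n$ scalar SWIP forms. This is valid for $\boldsymbol{u}\in\mathbf{H}^2(\partition)$, since all traces are well defined elementwise. For each $l$ we get four contributions.
\begin{itemize}
\item The volume term is bounded by Cauchy--Schwarz as $\|\sqrt{\mathbf{D}^l}\nabla_h u_l\|_2\|\sqrt{\mathbf{D}^l}\nabla_h v_{lh}\|_2$.
\item The penalty term is bounded as $\|\eta_l^{1/2}\jump{u_l}\|_{\facesinternal\cup\facesD}\|\eta_l^{1/2}\jump{v_{lh}}\|_{\facesinternal\cup\facesD}$.
\item The term $(\jump{u_l},\avgl{\mathbf{D}^l\nabla_h v_{lh}})$ involves the gradient of the discrete function. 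Here we repeat verbatim the argument of Proposition~\ref{prop:continuity}: split by weights $\delta_l^\pm$, apply Cauchy--Schwarz with the same $\alpha_F^\pm$, and use the inverse trace inequality on $v_{lh}$. The weight-to-harmonic-average bound of \eqref{eq:coer:laststep} then gives $\|\eta_l^{1/2}\jump{u_l}\|\,\|\sqrt{\mathbf{D}^l}\nabla_h v_{lh}\|_2$. The same applies on Dirichlet faces.
\item The term $(\avgl{\mathbf{D}^l\nabla_h u_l},\jump{v_{lh}})$ cannot use an inverse inequality, since $u_l$ is not polynomial. Instead we insert $\eta_l^{-1/2}\eta_l^{1/2}$ and apply Cauchy--Schwarz on each face. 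This gives $\|\eta_l^{-1/2}\avgl{\mathbf{D}^l\nabla_h u_l}\|_{\facesinternal\cup\facesD}\|\eta_l^{1/2}\jump{v_{lh}}\|_{\facesinternal\cup\facesD}$, which is exactly the extra part of the triple norm \eqref{eq:triplenorm}.
\end{itemize}

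Next we collect the terms. Denote $a_l=\|\sqrt{\mathbf{D}^l}\nabla_h u_l\|_2$, $b_l=\|\eta_l^{1/2}\jump{u_l}\|$ and $c_l=\|\eta_l^{-1/2}\avgl{\mathbf{D}^l\nabla_h u_l}\|$. Denote also $x_l=\|\sqrt{\mathbf{D}^l}\nabla_h v_{lh}\|_2$ and $y_l=\|\eta_l^{1/2}\jump{v_{lh}}\|$. Then
\[
|\mathcal{A}_l(u_l,v_{lh})|\le a_lx_l+b_ly_l+b_lx_l+c_ly_l=(a_l+c_l)\cdot(\ldots)\ \text{regrouped as}\ (a_l,b_l,b_l,c_l)\cdot(x_l,y_l,x_l,y_l).
\]
A discrete Cauchy--Schwarz inequality bounds this by $(a_l^2+2b_l^2+c_l^2)^{1/2}(2x_l^2+2y_l^2)^{1/2}\le \sqrt2\,\TDGnorm{u_l}\cdot\sqrt2\,\|v_{lh}\|_{\mathrm{DG}}=2\TDGnorm{u_l}\|v_{lh}\|_{\mathrm{DG}}$. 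Here we used $a_l^2+2b_l^2+c_l^2\le 2(a_l^2+b_l^2+c_l^2)$.

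Summing over $l$ and applying Cauchy--Schwarz once more in $\ell^2(\{1,\dots,n\})$ yields \eqref{eq:continuity_2}. This uses that the DG and triple norms are the $\ell^2$ sums of their componentwise counterparts, with $\eta=\mathbf{H}$ acting componentwise.

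The main obstacle is the third bullet, i.e.\ making sure the inverse-trace argument gives constant $1$ rather than some $C$. This needs $\tilde\eta\ge 4C_{\mathrm{inv}}$ so that the choice of $\alpha_F^\pm$ from the coercivity proof closes with $\hat\eta_l\le\eta_l$. I would reuse that computation exactly, giving a factor $\le 1/2$ that absorbs comfortably.
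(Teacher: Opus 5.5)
Your proposal is correct and follows the same route as the paper. The paper's proof says only to repeat the argument of Proposition~\ref{prop:continuity} while applying the inverse trace inequality to the discrete argument $\boldsymbol{v}_h$ alone; you make explicit what it leaves implicit, namely that the consistency term $(\avgl{\mathbf{D}^l\nabla_h u_l},\jump{v_{lh}})$ is controlled by the extra face contribution in $\TDGnorm{\cdot}$, and that the four-term Cauchy--Schwarz bookkeeping gives exactly the constant $2$.
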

\begin{proof}
The proof is the same of Proposition \ref{prop:continuity}, but neglecting the inverse trace inequality on the $\boldsymbol{u}$, for which it does not hold in principle.
\end{proof}
Finally, we introduce the interpolant $\boldsymbol{w}_\mathrm{I}\in\Wh$ of the solution of the continuous formulation \eqref{eq:general_weak} defined as in \cite{babuska_p_1994,di_pietro_hybrid_2020}.
\begin{proposition}[Interpolation result \cite{babuska_p_1994}]
\label{prop:interpolant}
Let $K\in\partition$ and assumption \ref{ass:mesh} be fulfilled. Let $\boldsymbol{u}_\mathrm{I}\in\Wh$ be the interpolant of a function $\boldsymbol{u}\in H^\nu(\partition)$ for $\nu\geq 2$. Then the following estimate holds provided that $\tilde{\eta}$ is large enough:
\begin{equation}
    \TDGnorm{\boldsymbol{u}-\boldsymbol{u}_\mathrm{I}}^2 \leq \sum_{K\in\partition} C_\mathrm{I} \frac{h_K^{2\min\{\ell+1,\nu\}-2}}{\ell^{\nu-2}} \|\boldsymbol{u}\|^2_{\mathbf{H}^\nu(K)}
\end{equation}
\end{proposition}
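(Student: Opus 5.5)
The estimate is local: it suffices to bound, element by element, the two contributions to $\TDGnorm{\boldsymbol{u}-\boldsymbol{u}_\mathrm{I}}^2$, namely the DG-norm (broken gradient plus penalised jumps) and the weighted face term $\|\eta^{-1/2}\avg{\mathbb{D}:\nabla_h(\boldsymbol{u}-\boldsymbol{u}_\mathrm{I})}\|^2$. Since $\mathbb{D}$ is block-diagonal over species, everything decouples componentwise, so we may work with a scalar $u$ and a single tensor $\mathbf{D}^l$, summing over $l$ at the end. We construct $\boldsymbol{u}_\mathrm{I}$ as in \cite{babuska_p_1994,di_pietro_hybrid_2020}. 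Using the submesh condition of Assumption~\ref{ass:mesh}, each $K$ is covered by a simplex or ball of comparable diameter. We extend $u|_K$ via a Stein extension, apply the $hp$ approximant of Babu\v{s}ka--Suri on that covering set, and restrict back to $K$. This yields, for $0\le s\le \nu$, the local bounds $|u-u_\mathrm{I}|_{H^s(K)}\lesssim h_K^{\min\{\ell+1,\nu\}-s}\ell^{-(\nu-s)}\|u\|_{H^\nu(K)}$, up to the usual overlap constants.

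The volume term $\|\sqrt{\mathbf{D}^l}\nabla_h(u-u_\mathrm{I})\|^2_{L^2(K)}$ is bounded directly with $s=1$, using $\mathbf{D}^l\in\mathbb{L}^\infty$. For the face terms we use the continuous (not inverse) trace inequality on polytopes under Assumption~\ref{ass:mesh}: $\|v\|^2_{L^2(F)}\lesssim |F||K|^{-1}\bigl(\|v\|^2_{L^2(K)}+h_K^2\|\nabla v\|^2_{L^2(K)}\bigr)$, applied once with $v=u-u_\mathrm{I}$ for the jump term and once with $v=\nabla(u-u_\mathrm{I})$ for the average term.

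The scalings then have to be combined with the penalty. For the jump term, $\eta_l\sim\tilde\eta\,\ell^2 d_l|F|/|K|\sim \ell^2h_K^{-1}$ by shape and contact regularity. It multiplies $h_K^{-1}\bigl(h_K^{2m}\ell^{-2\nu}+h_K^{2m}\ell^{-2\nu+2}\bigr)$, where $m=\min\{\ell+1,\nu\}$, and so gives $h_K^{2m-2}\ell^{-2\nu+4}$. For the average term, the weights $\delta_l^\pm$ lie in $[0,1]$, and $\eta_l^{-1}\sim h_K\ell^{-2}$ multiplies $h_K^{-1}\bigl(h_K^{2m-2}\ell^{-2\nu+2}+h_K^{2m-2}\ell^{-2\nu+4}\bigr)$, so this term is lower order. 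With the harmonic averages we use $\{d_l\}_\mathrm{H}\le 2\min d_l^{K^\pm}$ so that the weighted averages cancel the diffusion scaling, as in the coercivity proof.

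Collecting terms gives the power $h_K^{2m-2}$ times a negative power of $\ell$, and summing over $K$ and $l$ yields the claim with $C_\mathrm{I}$ depending on $\mathbf{D}$, the mesh constants and $\tilde\eta$.

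The main obstacle is the $\ell$-bookkeeping. To reach exactly $\ell^{-(\nu-2)}$ rather than a weaker power such as $\ell^{-(2\nu-4)}$, some loss must be absorbed, and the $\ell$-dependence of the trace inequality for non-polynomial functions must be handled carefully. I would first settle this with the sharp Babu\v{s}ka--Suri face estimates, which give $\|u-u_\mathrm{I}\|_{L^2(F)}$ with an extra $\ell^{-1/2}$, and then check that the condition ``$\tilde\eta$ large enough'' only enters through the equivalence $\eta_l\sim\ell^2/h_K$.
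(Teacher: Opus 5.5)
Your argument is the standard one behind the paper's citation of Babu\v{s}ka--Suri: local $hp$ approximation, a continuous trace inequality, and $\eta_l\sim\ell^2/h_K$. The paper gives no proof beyond that citation. Your term-by-term bookkeeping is correct. With $m=\min\{\ell+1,\nu\}$, the volume, jump and average contributions scale like $h_K^{2m-2}\ell^{-(2\nu-2)}$, $h_K^{2m-2}\ell^{-(2\nu-4)}$ and $h_K^{2m-2}(\ell^{-2\nu}+\ell^{-(2\nu-2)})$, respectively.

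However, the ``main obstacle'' in your last paragraph does not exist, because you have the comparison backwards. Since $\ell\ge 1$ and $2\nu-4\ge\nu-2$ whenever $\nu\ge 2$, we have $\ell^{-(2\nu-4)}\le\ell^{-(\nu-2)}$. So $\ell^{-(2\nu-4)}$ is the \emph{stronger} bound. The statement, with its weaker factor $\ell^{-(\nu-2)}$, follows immediately from what you already derived, and no loss has to be absorbed. The sharp Babu\v{s}ka--Suri face estimates would only improve your bound to the usual $\ell^{-(2\nu-3)}$. Also, the continuous trace inequality you apply to $u-u_\mathrm{I}$ and $\nabla(u-u_\mathrm{I})$ is $\ell$-independent, so there is nothing further to control there.

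The step that actually needs care is the local approximation estimate with $\|u\|_{H^\nu(K)}$ on the right-hand side. The statement assumes only broken regularity $H^\nu(\partition)$, so you must extend $u|_K$ from each element separately. You then need the rescaled Stein-extension constants to be uniform over the mesh, which amounts to a uniform Lipschitz character of the elements. You should either derive this from Assumption~\ref{ass:mesh} or state it as an extra hypothesis, rather than take it for granted.

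Your phrase ``up to the usual overlap constants'' belongs to a different standard route, the one in \cite{cangiani_hp_version_2017}. There $u\in H^\nu(\Omega)$ is extended globally, and the local bound involves $\|\mathcal{E}u\|_{H^\nu(\mathcal{K})}$ on covering simplices with bounded overlap. That route needs global regularity and does not directly give the elementwise norms appearing in the statement.
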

In this section, we assume that $\boldsymbol{w}_{0h} = \boldsymbol{w}_\mathrm{I}(0)\in \Wh$. Now, we are ready to state the a priori estimate.
\begin{theorem}[A priori error estimate]
\label{thm:apriori}
Let us consider problem \eqref{eq:general_weak} and its solution $\boldsymbol{w}\in \mathbf{C}^1((0,T]; \mathbf{H}^\nu(\Omega))$ for $\nu \geq 2$ and let Assumptions \ref{ass:c_structure} and \ref{ass:mesh} be fulfilled. 
Let us assume further regularity on the initial condition $\boldsymbol{w}_0\in \mathbf{H}^1(\Omega)$. 
For a sufficiently large penalty parameter $\tilde{\eta}$, let $\boldsymbol{w}_h$ be the solution of \eqref{eq:semidiscrete} for any $t\in (0,t^*_{p_M}]$, the following estimate holds: 
\begin{align*}
\|\boldsymbol{w}(t)-\boldsymbol{w}_h(t)\|_{\mathbf{L}^2(\Omega)}^2
& + \frac{1}{2}\int_0^t \TDGnorm{\boldsymbol{w}(\tau)-\boldsymbol{w}_h(\tau)}^2\,\dtau
\;\\ & \lesssim
\sum_{K\in\partition} h_K^{2\min\{\ell+1,\nu\}-2} \left(\|\boldsymbol{w}(t)\|^2_{\mathbf{H}^\nu(K)} + \int_0^t \|\boldsymbol{w}(\tau)\|^2_{\mathbf{H}^\nu(K)} \dtau +  \int_0^t \|\dot{\boldsymbol{w}}(\tau)\|^2_{\mathbf{H}^\nu(K)}\right) \exp\left(\displaystyle\int_0^t \alpha(\tau) \dtau\right),
\end{align*}
where the hidden constant and $\alpha(\tau)$ may depend on $\boldsymbol{w}$, $\boldsymbol{w}_{0h}$, $\boldsymbol{\gamma}$, $\Omega$, $\ell$ but is independent of the discretization parameter $h$.
\end{theorem}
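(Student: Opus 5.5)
We split the error in the standard way, $\boldsymbol{w}-\boldsymbol{w}_h = (\boldsymbol{w}-\boldsymbol{w}_\mathrm{I}) + (\boldsymbol{w}_\mathrm{I}-\boldsymbol{w}_h) =: \boldsymbol{e}_\mathrm{I} + \boldsymbol{e}_h$, with $\boldsymbol{e}_h\in\Wh$ and $\boldsymbol{e}_h(0)=\boldsymbol{0}$ since $\boldsymbol{w}_{0h}=\boldsymbol{w}_\mathrm{I}(0)$.

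\textbf{Error equation.} The regularity assumptions ($\boldsymbol{w}(t)\in\mathbf{H}^\nu$, continuity of fluxes across interfaces) make the scheme strongly consistent. Hence $\boldsymbol{w}$ satisfies \eqref{eq:semidiscrete} with $\mathcal{A}$ extended to $\mathbf{H}^2(\partition)$, and subtracting gives, for all $\boldsymbol{v}_h\in\Wh$,
\begin{equation*}
(\dot{\boldsymbol{e}}_h,\boldsymbol{v}_h)+\mathcal{A}(\boldsymbol{e}_h,\boldsymbol{v}_h) = -(\dot{\boldsymbol{e}}_\mathrm{I},\boldsymbol{v}_h)-\mathcal{A}(\boldsymbol{e}_\mathrm{I},\boldsymbol{v}_h) + (\mathbf{G}(\boldsymbol{w})\boldsymbol{w}-\mathbf{G}(\boldsymbol{w}_h)\boldsymbol{w}_h,\boldsymbol{v}_h).
\end{equation*}
We test with $\boldsymbol{v}_h=\boldsymbol{e}_h$ and use Proposition~\ref{prop:coercivity} on the left, which yields $\frac12\frac{d}{dt}\Lpnorm{\boldsymbol{e}_h}{2}^2+\frac12\DGnorm{\boldsymbol{e}_h}^2$. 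On the right, the first term is bounded by $\frac12\Lpnorm{\dot{\boldsymbol{e}}_\mathrm{I}}{2}^2+\frac12\Lpnorm{\boldsymbol{e}_h}{2}^2$. The second is bounded, via Proposition~\ref{prop:continuity_2} and Young, by $8\TDGnorm{\boldsymbol{e}_\mathrm{I}}^2+\frac18\DGnorm{\boldsymbol{e}_h}^2$. Both are then controlled by Proposition~\ref{prop:interpolant}; this is what produces the $\dot{\boldsymbol{w}}$ and $\int_0^t\|\boldsymbol{w}\|_{\mathbf{H}^\nu}^2$ terms.

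\textbf{Nonlinear term (main obstacle).} We write
\begin{equation*}
\mathbf{G}(\boldsymbol{w})\boldsymbol{w}-\mathbf{G}(\boldsymbol{w}_h)\boldsymbol{w}_h = \mathbf{L}\boldsymbol{e} + \mathbf{Q}(\boldsymbol{w},\boldsymbol{e})+\mathbf{Q}(\boldsymbol{e},\boldsymbol{w}_h) + \mathbb{C}(\boldsymbol{e},\boldsymbol{w},\boldsymbol{w})+\mathbb{C}(\boldsymbol{w}_h,\boldsymbol{e},\boldsymbol{w})+\mathbb{C}(\boldsymbol{w}_h,\boldsymbol{w}_h,\boldsymbol{e}),
\end{equation*}
i.e.\ a telescoping multilinear expansion. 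The key point is that every term is linear in $\boldsymbol{e}=\boldsymbol{e}_\mathrm{I}+\boldsymbol{e}_h$, multiplied by factors of $\boldsymbol{w}$ or $\boldsymbol{w}_h$. Hölder's inequality gives bounds of the form
\begin{equation*}
\|\boldsymbol{w}\|_{L^\infty}\|\boldsymbol{e}\|_{L^2}\|\boldsymbol{e}_h\|_{L^2},\quad \|\boldsymbol{w}_h\|_{L^8}^2\|\boldsymbol{e}\|_{L^4}\|\boldsymbol{e}_h\|_{L^2},\quad \|\boldsymbol{w}_h\|_{L^4}\|\boldsymbol{e}\|_{L^4}\|\boldsymbol{e}_h\|_{L^2}.
\end{equation*}
Here $\|\boldsymbol{w}\|_{L^\infty}$ is bounded since $\boldsymbol{w}\in\mathbf{H}^2$ in $d=2$. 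The norms $\|\boldsymbol{w}_h\|_{L^q}$ are bounded, via the discrete Sobolev–Poincaré inequality, by $\DGnorm{\boldsymbol{w}_h}$, which is controlled for $t<t^*_{p_M}$ by the $e_2$-estimates of Theorems~\ref{thm:stab_DG_1}--\ref{thm:stab_DG_p}. This stability input is what avoids any structural condition linking $d_0$ and the reactions. For $\|\boldsymbol{e}_h\|_{L^4}$ we use \eqref{eq:gn_adapt} with $p=4$, $\alpha=2$, i.e.\ $\|\boldsymbol{e}_h\|_{L^4}^2\le\varepsilon\DGnorm{\boldsymbol{e}_h}^2+C_\varepsilon\|\boldsymbol{e}_h\|_{L^2}^2$. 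We absorb $\frac18\DGnorm{\boldsymbol{e}_h}^2$ into the left side and put the remainder into $\alpha(\tau)\Lpnorm{\boldsymbol{e}_h}{2}^2$. Here $\alpha(\tau)$ is built from $1+l_\infty+\|\boldsymbol{w}\|_{L^\infty}^2+\Cen{2}{p}(\tau)^2$, and it is only time-integrable because of the stability bounds. The $\boldsymbol{e}_\mathrm{I}$ contributions are bounded by $\TDGnorm{\boldsymbol{e}_\mathrm{I}}^2$ (again through Sobolev embedding of broken $H^1$) times the same coefficient. No block recursion is needed here, since we use only the uniform bounds already obtained.

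\textbf{Conclusion.} We integrate over $(0,t)$ and apply Grönwall's lemma (linear, so no Perov step), obtaining
\begin{equation*}
\Lpnorm{\boldsymbol{e}_h(t)}{2}^2+\tfrac12\int_0^t\DGnorm{\boldsymbol{e}_h}^2 \lesssim \int_0^t\bigl(\TDGnorm{\boldsymbol{e}_\mathrm{I}}^2+\Lpnorm{\dot{\boldsymbol{e}}_\mathrm{I}}{2}^2\bigr)\,e^{\int_0^t\alpha}.
\end{equation*}
Finally, we add $\boldsymbol{e}_\mathrm{I}$ back by the triangle inequality, using $\Lpnorm{\boldsymbol{e}_\mathrm{I}(t)}{2}\lesssim\TDGnorm{\boldsymbol{e}_\mathrm{I}(t)}$, which is the source of the $\|\boldsymbol{w}(t)\|_{\mathbf{H}^\nu}$ term. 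For $\boldsymbol{e}_h$ the $\TDGnorm{\cdot}$ and $\DGnorm{\cdot}$ norms are equivalent by the discrete inverse trace inequality, and Proposition~\ref{prop:interpolant} then gives the stated rate. The delicate step is the cubic term: we must ensure every $\boldsymbol{w}_h$-factor is measured in an $L^q$ norm controlled by the DG stability bound, and that only $L^2$ or absorbable DG norms of $\boldsymbol{e}_h$ appear.
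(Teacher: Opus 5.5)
Your proposal is correct and follows essentially the same route as the paper. It uses the same splitting into $\boldsymbol{e}_\mathrm{I}$ and $\boldsymbol{e}_h$, coercivity plus the continuity bound of Proposition~\ref{prop:continuity_2}, the Gagliardo--Nirenberg inequality \eqref{eq:gn_adapt} with $p=4$, $\alpha=2$ together with the $e_2$-stability bounds on $\boldsymbol{w}_h$, a linear Gr\"onwall argument, and finally the triangle inequality with the interpolation estimate. The differences are cosmetic: you expand the nonlinearity telescopically and measure $\boldsymbol{w}$ in $L^\infty$ via $H^2\hookrightarrow L^\infty$ in $d=2$, whereas the paper uses the pointwise bounds $q_\infty(|\boldsymbol{w}|+|\boldsymbol{w}_h|)|\boldsymbol{w}-\boldsymbol{w}_h|$ and $c_\infty(|\boldsymbol{w}|^2+|\boldsymbol{w}||\boldsymbol{w}_h|+|\boldsymbol{w}_h|^2)|\boldsymbol{w}-\boldsymbol{w}_h|$ with only finite $L^p$ norms of $\boldsymbol{w}$; and you absorb $\Lpnorm{\boldsymbol{e}_\mathrm{I}}{4}$ into $\TDGnorm{\boldsymbol{e}_\mathrm{I}}$ through a broken Sobolev embedding instead of carrying it as a separate term.
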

\begin{proof}
First, we subtract equation \eqref{eq:semidiscrete} from \eqref{eq:general_weak}, using $\boldsymbol{v} = \boldsymbol{v_h}\in \Wh$.
\begin{align*}
    (\dot{\boldsymbol{w}}-\dot{\boldsymbol{w}}_h,\boldsymbol{v}_h) + \mathcal{A}(\boldsymbol{w}-\boldsymbol{w}_h,\boldsymbol{v}_h)  = & \,(\mathbf{L}(\boldsymbol{w}-\boldsymbol{w}_h),\boldsymbol{v}_h) + \left((\mathbf{Q} \boldsymbol{w}) \boldsymbol{w} - (\mathbf{Q} \boldsymbol{w}_h) \boldsymbol{w}_h,\boldsymbol{v}_h\right)  \\ + & \,  \left(((\mathbb{C}\boldsymbol{w}) \boldsymbol{w}) \boldsymbol{w}-((\mathbb{C} \boldsymbol{w}_h) \boldsymbol{w}_h) \boldsymbol{w}_h,\boldsymbol{v}_h\right) 
\end{align*}
Then we decompose the difference $\boldsymbol{w}-\boldsymbol{w}_h = (\boldsymbol{w}-\boldsymbol{w}_\mathrm{I})+(\boldsymbol{w}_\mathrm{I}-\boldsymbol{w}_h) =: -\boldsymbol{e}_\mathrm{I} + \boldsymbol{e}_h$, where $\boldsymbol{w}_\mathrm{I}$ is the interpolant defined as in Proposition \ref{prop:interpolant}. Moreover, we choose $\boldsymbol{v}_h = \boldsymbol{e}_h$. Then, we obtain
\begin{equation}
\label{eq:step_a_priori}
\begin{aligned}
    (\dot{\boldsymbol{e}}_h,\boldsymbol{e}_h) + \mathcal{A}(\boldsymbol{e}_h,\boldsymbol{e}_h) \leq & (\dot{\boldsymbol{e}}_\mathrm{I},\boldsymbol{e}_h) +\mathcal{A}(\boldsymbol{e}_\mathrm{I},\boldsymbol{e}_h) +l_\infty \Lpnorm{\boldsymbol{e}_h}{2}^2 - l_\infty(\boldsymbol{e}_\mathrm{I},\boldsymbol{e}_h) \\ + & \left((\mathbf{Q} \boldsymbol{w}) \boldsymbol{w} -  (\mathbf{Q} \boldsymbol{w}_h) \boldsymbol{w}_h,\boldsymbol{e}_h\right) + \left(((\mathbb{C}\boldsymbol{w}) \boldsymbol{w}) \boldsymbol{w}-((\mathbb{C} \boldsymbol{w}_h) \boldsymbol{w}_h) \boldsymbol{w}_h,\boldsymbol{v}_h\right). 
\end{aligned}
\end{equation}
Then we rewrite the nonlinear terms using the following algebraic relations to neglect the dependence on $\boldsymbol{w}$ where we exploit the $L^\infty(\Omega)$-bounds of the parameters.
\begin{align*}
|(\mathbf{Q} \boldsymbol{w}) \boldsymbol{w} -  (\mathbf{Q} \boldsymbol{w}_h) \boldsymbol{w}_h| & \leq q_\infty(|\boldsymbol{w}| + |\boldsymbol{w}_h|)|\boldsymbol{w} -\boldsymbol{w}_h| = q_\infty(|\boldsymbol{w}| + |\boldsymbol{w}_h|) |\boldsymbol{e}_h -\boldsymbol{e}_\mathrm{I}|,\\
| ((\mathbb{C}\boldsymbol{w}) \boldsymbol{w}) \boldsymbol{w}-((\mathbb{C} \boldsymbol{w}_h) \boldsymbol{w}_h) \boldsymbol{w}_h | & \leq c_\infty(|\boldsymbol{w}|^2 + |\boldsymbol{w}_h||\boldsymbol{w}| +  |\boldsymbol{w}_h|^2) |\boldsymbol{w} -\boldsymbol{w}_h| = c_\infty(|\boldsymbol{w}|^2 + |\boldsymbol{w}_h| |\boldsymbol{w}| +  |\boldsymbol{w}_h|^2) |\boldsymbol{e}_h -\boldsymbol{e}_\mathrm{I}|,
\end{align*}
Concerning the nonlinear terms, we can bound the terms using H\"older inequality, Young inequality and Equation \eqref{eq:gn_adapt}:
\begin{align*}
    ((q_\infty(|\boldsymbol{w}| + & |\boldsymbol{w}_h|)+c_\infty(|\boldsymbol{w}|^2 + |\boldsymbol{w}_h||\boldsymbol{w}| +  |\boldsymbol{w}_h|^2))(\boldsymbol{e}_h -\boldsymbol{e}_\mathrm{I}),\boldsymbol{e}_h) \\ \leq & 
    \left(q_\infty(\Lpnorm{\boldsymbol{w}}{2} +
    \Lpnorm{\boldsymbol{w}_h}{2}) + 2c_\infty(\Lpnorm{\boldsymbol{w}}{4}^2 +
    \Lpnorm{\boldsymbol{w}_h}{4}^2)\right)
    \Lpnorm{\boldsymbol{e}_h}{4}^2 \\ & +
    \left(q_\infty(\Lpnorm{\boldsymbol{w}}{4} +
    \Lpnorm{\boldsymbol{w}_h}{4}) + 2c_\infty(\Lpnorm{\boldsymbol{w}}{8}^2 +
    \Lpnorm{\boldsymbol{w}_h}{8}^2)\right)
    \Lpnorm{\boldsymbol{e}_\mathrm{I}}{4}
    \Lpnorm{\boldsymbol{e}_h}{2} \\ &
    \leq 
    \frac{1}{4}\,\|\boldsymbol{e}_h\|_{\mathrm{DG}}^2 +
    C_{\mathrm{GN}_4}^{4} \left(q_\infty \Lpnorm{\boldsymbol{w}}{2} + q_\infty \sqrt{C_{e_1}} + 2c_\infty \Lpnorm{\boldsymbol{w}}{4}^2 +
    8c_\infty C_{E_4} C_{e_2} \right)^2\,\|\boldsymbol{e}_h\|_{\mathbf{L}^2(\Omega)}^2 \\ & +
    \frac{1}{2}\left(q_\infty \Lpnorm{\boldsymbol{w}}{4} + 4 q_\infty \sqrt{C_{E_4} C_{e_2}} + 2 c_\infty \Lpnorm{\boldsymbol{w}}{8}^2 +
    8c_\infty C_{E_8} C_{e_2} \right)^2
    \Lpnorm{\boldsymbol{e}_\mathrm{I}}{4}^2 + 
    \frac{1}{2}\Lpnorm{\boldsymbol{e}_h}{2}^2,
\end{align*}
where in the last step we used the stability estimates in Theorems \ref{thm:stab_DG_1} and \ref{thm:stab_DG_p} and the Sobolev-Poincaré inequality in \cite{di_pietro_hybrid_2020}. 
The result is then applied to \eqref{eq:step_a_priori} integrated between $(0,t)$ and using the estimates in Proposition \ref{prop:coercivity} and \ref{prop:continuity_2}:
\begin{align*}
    &\Lpnorm{\boldsymbol{e}_h(t)}{2}^2 + \frac{1}{8} \int_0^t 
    \DGnorm{\boldsymbol{e}_h(\tau)}^2 \dtau \leq \int_0^t \left(\frac{1}{2} \Lpnorm{\dot{\boldsymbol{e}}_\mathrm{I}(\tau)}{2}^2 + 8 \TDGnorm{\boldsymbol{e}_\mathrm{I}(\tau)}^2 \right) \dtau \\ & \quad +
    \frac{1}{2} \int_0^t \left(l_\infty+ q_\infty \Lpnorm{\boldsymbol{w}(\tau)}{4} + 4 q_\infty \sqrt{C_{E_4} C_{e_2}(\tau)} + 2 c_\infty \Lpnorm{\boldsymbol{w}(\tau)}{8}^2 +
    8c_\infty C_{E_8} C_{e_2}(\tau) \right)^2
    \Lpnorm{\boldsymbol{e}_\mathrm{I}(\tau)}{4}^2 \dtau
    \\ & \quad + \int_0^t \left(1 + \frac{3 l_\infty}{2} + C_{\mathrm{GN}_4}^{4} \left(q_\infty \Lpnorm{\boldsymbol{w}(\tau)}{2} + q_\infty \sqrt{C_{e_1}(\tau)} + 2c_\infty \Lpnorm{\boldsymbol{w}(\tau)}{4}^2 +
    8c_\infty C_{E_4} C_{e_2}(\tau) \right)^2 \right) \Lpnorm{\boldsymbol{e}_h(\tau)}{2}^2 \dtau.
\end{align*}
By Grönwall's inequality, we obtain
\begin{align*}
\|\boldsymbol{e}_h(t)\|_{\mathbf{L}^2(\Omega)}^2
& + \int_0^t \DGnorm{\boldsymbol{e}_h(\tau)}^2\,\dtau
\;\leq\;
\Bigg[
 \int_0^t \left(4
\Lpnorm{\dot{\boldsymbol{e}}_\mathrm{I}(\tau)}{2}^2
+ 24\TDGnorm{\boldsymbol{e}_\mathrm{I}(\tau)}^2\right)\dtau \\
& + \int_0^t 4 \left(q_\infty \Lpnorm{\boldsymbol{w}(\tau)}{4}
+ 4 q_\infty \sqrt{C_{E_4} C_{e_2}(\tau)}
+ 2c_\infty \Lpnorm{\boldsymbol{w}(\tau)}{8}^2
+ 8c_\infty C_{E_8} C_{e_2}(\tau)\right)^2
\Lpnorm{\boldsymbol{e}_\mathrm{I}(\tau)}{4}^2\,\dtau
\Bigg] \\
& \times \exp\!\left(\int_0^t \left(1+\frac{3l_\infty}{2}
+ C_{\mathrm{GN}_4}^{4}\left(q_\infty \Lpnorm{\boldsymbol{w}(\tau)}{2}
+ q_\infty \sqrt{C_{e_1}(\tau)}
+ 2c_\infty \Lpnorm{\boldsymbol{w}(\tau)}{4}^2
+ 8c_\infty C_{E_4} C_{e_2}(\tau)\right)^2\right)\dtau\right).
\end{align*}
Exploiting the definition 
\eqref{eq:triplenorm} for which it holds $\TDGnorm{\boldsymbol{e}_h} \leq 2\TDGnorm{\boldsymbol{e}_h}$ \cite{di_pietro_mathematical_2012}, and using the triangular inequality on $\boldsymbol{w}-\boldsymbol{w}_h$ we obtain:
\begin{align*}
\|\boldsymbol{w}(t)-\boldsymbol{w}_h(t)\|_{\mathbf{L}^2(\Omega)}^2
& + \frac{1}{2}\int_0^t \TDGnorm{\boldsymbol{w}(\tau)-\boldsymbol{w}_h(\tau)}^2\,\dtau
\;\leq\;
\Lpnorm{\boldsymbol{e}_\mathrm{I}(t)}{2}^2 + \int_0^t \TDGnorm{\boldsymbol{e}_\mathrm{I}(\tau)}^2\dtau \\ & + \Bigg[
4 \int_0^t \Lpnorm{\dot{\boldsymbol{e}}_\mathrm{I}(\tau)}{2}^2 \dtau + 24 \int_0^t \TDGnorm{\boldsymbol{e}_\mathrm{I}(\tau)}^2\dtau + 4 \int_0^t \kappa(\tau) \Lpnorm{\boldsymbol{e}_\mathrm{I}(\tau)}{4}^2\,\dtau
\Bigg] \exp\left(\displaystyle\int_0^t \alpha(\tau) \dtau\right).
\end{align*}
The thesis follows from the interpolation estimate in Proposition \ref{prop:interpolant}.
\end{proof}
\begin{remark}
In the case of a purely quadratic nonlinearity ($\mathbb{C} = \boldsymbol{0}$), Theorem~\ref{thm:apriori} not only extends to the vector-valued setting the a priori error estimate result for the Fisher--Kolmogorov equation proved in~\cite[Theorem~2]{corti_discontinuous_2023} for the case $d=2$, but also improves it by removing the structural relations between the reaction terms and the diffusion coercivity constant.
\end{remark}
\section{Time discretization: fully-discrete formulation}
\label{sec:IMEX_RK}
To derive the algebraic formulation of the system we introduce a suitable basis
$(\varphi_{jk})_{k=1}^{N_h}$ for each scalar component of the space $\Wh$ such
that $n\,N_h = \dim(\Wh)$. Then, we construct a vectorial basis
$(\boldsymbol{\Phi}_{k})_{k=1}^{n\,N_h}$ such that
\[
\boldsymbol{\Phi}_{k} = [\varphi_{k},0,\dots,0]^{\top}, \quad k=1,\dots,N_h,
\]
\[
\boldsymbol{\Phi}_{N_h+k} = [0,\varphi_{k},0,\dots,0]^{\top}, \quad k=1,\dots,N_h,
\]
and so on, so that each block of $N_h$ basis functions corresponds to one
population.

Then we rewrite the semi-discrete solution as
\begin{equation*}
   \boldsymbol{w}_{h}(\boldsymbol{x},t)
   = \sum_{k=1}^{n\,N_h} \boldsymbol{W}_{k}(t)\,\boldsymbol{\Phi}_{k}(\boldsymbol{x}),
\end{equation*}
where $\boldsymbol{W}(t) \in \mathbb{R}^{n\,N_h}$ is the corresponding vector
of expansion coefficients written in terms of the chosen basis. Moreover, we
define the matrices and vectors, for $i,j=1,\dots,n\,N_h$,
\begin{alignat*}{3}
    [\mathbf{M}]_{ij} =& \,(\boldsymbol{\Phi}_{j},\boldsymbol{\Phi}_{i}),
    && \qquad \text{(mass matrix),} \\
    [\mathbf{K}]_{ij} =& \,\mathcal{A}(\boldsymbol{\Phi}_{j}, \boldsymbol{\Phi}_{i}),
    && \qquad \text{(stiffness matrix),} \\
    [\boldsymbol{G}(\boldsymbol{W}(t))]_{j} =&\,(\widehat{\boldsymbol{G}}(\boldsymbol{w}_{h}(t)),
    \boldsymbol{\Phi}_{j}),
    && \qquad \text{(nonlinear reaction vector),}\\
    [\boldsymbol{\Gamma}]_{j} =&\,(\boldsymbol{\gamma}, \boldsymbol{\Phi}_{j}),
    && \qquad \text{(forcing vector).}
\end{alignat*}
The resulting semi-discrete problem can be written in the abstract form
\begin{equation}\label{eq:algebraic}
    \begin{dcases}
    \mathbf{M}\dot{\boldsymbol{W}}(t) + \mathbf{K}\boldsymbol{W}(t)
    = \boldsymbol{G}(\boldsymbol{W}(t)) + \boldsymbol{\Gamma},
    & \qquad t \in (0,T], \\
    \boldsymbol{W}(0) = \boldsymbol{W}_{0}.
    \end{dcases}
\end{equation}
\par
To advance \eqref{eq:algebraic} in time, we adopt an implicit--explicit
Runge--Kutta (IMEX--RK) time discretization, in which the linear diffusion part is treated implicitly, while the nonlinear reaction
term $\boldsymbol{G}(\boldsymbol{W})+\boldsymbol{\Gamma}$ is treated
explicitly. More precisely, letting $0=t^0<t^1<\dots<t^{N_T}=T$ with
time-step $\Delta t$, and denoting by $\boldsymbol{W}^m\approx
\boldsymbol{W}(t^m)$, we use an $s$-stage IMEX--RK scheme of the form
\[
\boldsymbol{W}^{m+1}
= \boldsymbol{W}^{m}
+ \Delta t \sum_{i=1}^s \tilde{b}_i
\bigl(\boldsymbol{G}(\boldsymbol{W}^{(i)}) + \boldsymbol{\Gamma}\bigr)
- \Delta t \sum_{i=1}^s b_i\,\mathbf{K}\boldsymbol{W}^{(i)},
\]
where the internal stages $\boldsymbol{W}^{(i)}$ are defined by a pair of
Butcher tableaux $(\tilde{A},\tilde{b})$ and $(A,b)$ corresponding to a
high-order optimized IMEX--RK method, as in
\cite{antonietti_optimized_2026} for diffusion--reaction problems.
\begin{remark}
The IMEX--RK discretization is introduced for the numerical experiments; a fully discrete stability and error analysis is beyond the scope of the present work. In all convergence studies, the time step is selected sufficiently small so that the temporal contribution is negligible with respect to the spatial discretization error.
\end{remark}
\section{Numerical results}
\label{sec:numerical_results}
In this section, we present numerical experiments aimed at validating the theoretical results and illustrating the applicability of the proposed framework to representative reaction--diffusion models. In Section~\ref{sec:tc1}, we consider a three-species Lotka--Volterra competition--diffusion system with a known travelling-wave solution and verify the expected spatial convergence rates. Section~\ref{sec:tc2} addresses a manufactured two-species system featuring both self- and cross-species cubic higher-order interactions, for which we investigate $h$-convergence in the dG, $L^2(\Omega)$, and energy norms, as well as the behaviour under $p$-refinement. Finally, in Section~\ref{sec:tc3}, we consider a heterogeneous coupled Fisher--Kolmogorov model with discontinuous diffusion coefficients to assess the robustness of the weighted interior penalty discretization in a qualitative setting. The implementation of the PolyDG method is carried out within the \texttt{lymph} library framework~\cite{antonietti_lymph_2025}.

\subsection{Test case 1: Three-species competition-diffusion system}
\label{sec:tc1}
In this first test case, we consider the three-species competition-diffusion system studied in \cite{chen_exact_2012}, for which the existence of an analytical travelling-wave solution is proved. The problem reads:
\begin{equation}
\label{eq:lotka_volterra_3}
    \begin{dcases}
        \frac{\partial w_1}{\partial t} = \Delta w_1 + w_1\left(1-w_1-\frac{8}{5}w_2 - 2 w_3\right), & \mathrm{in}\,\Omega\times(0,T], \\
        \frac{\partial w_2}{\partial t} = \Delta w_2 + w_2\left(1-\frac{19}{25}w_1-w_2 - \frac{1}{16} w_3\right), & \mathrm{in}\,\Omega\times(0,T], \\
        \frac{\partial w_3}{\partial t} = \Delta w_3 + w_3\left(1-\frac{18}{25}w_1-\frac{8}{5}w_2 - w_3\right), & \mathrm{in}\,\Omega\times(0,T],
    \end{dcases}
\end{equation}
and is equivalent to problem~\eqref{eq:general_strong} without cubic interactions ($\mathbb{C} = \boldsymbol{0}$), setting
\begin{equation*}
   \mathbf{L} = \mathbf{I}, \qquad \mathbf{Q}\boldsymbol{w} = \mathrm{diag}\left(-w_1-\frac{8}{5}w_2 - 2 w_3,\, -\frac{19}{25}w_1-w_2 - \frac{1}{16} w_3,\, -\frac{18}{25}w_1-\frac{8}{5}w_2 - w_3\right),
\end{equation*}
where $\mathbf{I}$ is the identity matrix. Considering suitable boundary and initial conditions, the analytical solution of this problem is
\begin{equation}
    w_1 = \frac{1}{2}\left(1+\tanh\left(\frac{x-\sigma t}{5}\right)\right), \quad
    w_2 = \frac{1}{4}\left(1-\tanh\left(\frac{x-\sigma t}{5}\right)\right)^2, \quad
    w_3 = \frac{4}{25}\left(1-\tanh^2\left(\frac{x-\sigma t}{5}\right)\right),
\end{equation}
with $\sigma = \frac{11}{10}$ as proved in \cite{chen_exact_2012}. We consider the rectangular spatial domain $\Omega = (-30,30) \times (0,10)$ and set the final time to $T = 10^{-3}$. Dirichlet boundary conditions are prescribed by evaluating the exact solution on $\partial\Omega$. The computational domain is discretized using a sequence of polygonal meshes generated with PolyMesher~\cite{talischi_polymesher_2012}, consisting of $N_{\mathrm{el}} = 50, 100, 200, 400, 800, 1600, 3200$ elements. We consider polynomial degrees $\ell = 1,2,3,4$. To investigate the spatial convergence of the proposed method, we fix the time step to $\Delta t = 10^{-5}$ and employ the $\mathrm{IMEX}(4,6)$-$\mathrm{LD}_{p_3}$ scheme introduced in~\cite{antonietti_optimized_2026}.
\par
\begin{figure}[t!]
    \begin{subfigure}[b]{0.49\textwidth}
        \resizebox{\textwidth}{!}{\begin{tikzpicture}

\begin{axis}[%
    width           = 3.875in,
    height          = 2.500in,
    at              = {(2.6in,1.099in)},
    scale only axis,
    xmode           = log,
    xmin            = 8.3589e-01,
    xmax            = 6.5218e+00,
    xminorticks     = true,
    xlabel          = {$h$},
    ylabel          = {$\|\boldsymbol{w}(T) -\boldsymbol{w}_h(T)\|_{e_1}$},
    xticklabel      = {\pgfmathparse{exp(\tick)}\pgfmathprintnumber{\pgfmathresult}},
    x tick label style ={/pgf/number format/.cd, 
                         fixed, fixed zerofill,
                         precision=2},
    ymode           = log,
    ymin            = 1e-8,
    ymax            = 2e-1,
    yminorticks     = true,
    axis background/.style  = {fill=white},
    title style     = {font=\bfseries},
    xmajorgrids,
    xminorgrids,
    ymajorgrids,
    yminorgrids,
    legend style    = {at={(0.96,0.35)},legend cell align=left, draw=white!15!black}]
              
\addplot [color=red, line width=2.0pt, mark=*, mark options=solid]
  table[row sep=crcr]{%
6.5218e+00      1.2250e-01\\
4.2648e+00      5.9257e-02\\
3.3142e+00      2.9509e-02\\
2.3517e+00      1.6383e-02\\
1.6129e+00      8.4374e-03\\
1.2026e+00      4.5646e-03\\
8.3589e-01      2.6461e-03\\
};
\addlegendentry{$\ell=1$}

\addplot [color=violet, line width=2.0pt, mark=*, mark options=solid]
  table[row sep=crcr]{
6.5218e+00      1.5400e-02\\
4.2648e+00      5.4431e-03\\
3.3142e+00      2.2077e-03\\
2.3517e+00      1.0019e-03\\
1.6129e+00      4.1966e-04\\
1.2026e+00      1.7444e-04\\
8.3589e-01      7.8179e-05\\
};
\addlegendentry{$\ell=2$}

\addplot [color=blue, line width=2.0pt, mark=*, mark options=solid]
  table[row sep=crcr]{
6.5218e+00      2.1000e-03\\
4.2648e+00      6.1136e-04\\
3.3142e+00      1.4338e-04\\
2.3517e+00      5.5287e-05\\
1.6129e+00      1.4464e-05\\
1.2026e+00      4.5275e-06\\
8.3589e-01      1.3426e-06\\
};
\addlegendentry{$\ell=3$}

\addplot [color=teal, line width=2.0pt, mark=*, mark options=solid]
  table[row sep=crcr]{
6.5218e+00      2.6187e-04\\
4.2648e+00      5.4496e-05\\
3.3142e+00      1.1076e-05\\
2.3517e+00      3.5443e-06\\
1.6129e+00      6.1924e-07\\
1.2026e+00      1.4350e-07\\
8.3589e-01      3.1305e-08\\
};
\addlegendentry{$\ell=4$}

\node[right, align=left, text=black, font=\footnotesize]
at (axis cs:2.005,0.00525) {$1$};

\addplot [color=black, line width=1.5pt]
  table[row sep=crcr]{%
2.00   6.00e-03 \\
1.50   4.50e-03 \\
2.00   4.50e-03 \\
2.00   6.00e-03 \\
};

\node[right, align=left, text=black, font=\footnotesize]
at (axis cs:2.005,2.8e-04) {$2$};

\addplot [color=black, line width=1.5pt]
  table[row sep=crcr]{%
2.00   4.00e-04 \\
1.50   2.25e-04 \\
2.00   2.25e-04 \\
2.00   4.00e-04 \\
};

\node[right, align=left, text=black, font=\footnotesize]
at (axis cs:2.005,1.00e-5) {$3$};

\addplot [color=black, line width=1.5pt]
  table[row sep=crcr]{%
2.00   1.60e-05 \\
1.50   6.72e-06 \\
2.00   6.72e-06 \\
2.00   1.60e-05 \\
};

\node[right, align=left, text=black, font=\footnotesize]
at (axis cs:2.005,3.5e-7) {$4$};

\addplot [color=black, line width=1.5pt]
  table[row sep=crcr]{%
2.00   6.00e-07\\
1.50   1.89e-07\\
2.00   1.89e-07\\
2.00   6.00e-07\\
};

\end{axis}
\end{tikzpicture}
}
        \caption{Computed errors in energy norm w.r.t.~the mesh size~$h$.}
        \label{fig:LV_hconvergence_e1}
    \end{subfigure}
    \begin{subfigure}[b]{0.49\textwidth}
        \resizebox{\textwidth}{!}{\begin{tikzpicture}

\begin{axis}[%
    width           = 3.875in,
    height          = 2.500in,
    at              = {(2.6in,1.099in)},
    scale only axis,
    xmode           = log,
    xmin            = 8.3589e-01,
    xmax            = 6.5218e+00,
    xminorticks     = true,
    xlabel          = {$h$},
    ylabel          = {$\|\boldsymbol{w}(T) -\boldsymbol{w}_h(T)\|_{\mathbf{L}^2(\Omega)}$},
    xticklabel      = {\pgfmathparse{exp(\tick)}\pgfmathprintnumber{\pgfmathresult}},
    x tick label style ={/pgf/number format/.cd, 
                         fixed, fixed zerofill,
                         precision=2},
    ymode           = log,
    ymin            = 1e-8,
    ymax            = 2e-1,
    yminorticks     = true,
    axis background/.style  = {fill=white},
    title style     = {font=\bfseries},
    xmajorgrids,
    xminorgrids,
    ymajorgrids,
    yminorgrids,
    legend style    = {at={(0.96,0.35)},legend cell align=left, draw=white!15!black}]
              
\addplot [color=red, line width=2.0pt, mark=square*, mark options=solid]
  table[row sep=crcr]{%
6.5218e+00      1.2160e-01\\
4.2648e+00      5.8402e-02\\
3.3142e+00      2.8720e-02\\
2.3517e+00      1.5613e-02\\
1.6129e+00      7.7282e-03\\
1.2026e+00      3.8843e-03\\
8.3589e-01      2.0046e-03\\
};
\addlegendentry{$\ell=1$}

\addplot [color=violet, line width=2.0pt, mark=square*, mark options=solid]
  table[row sep=crcr]{
6.5218e+00      1.4700e-02\\
4.2648e+00      4.9097e-03\\
3.3142e+00      1.8141e-03\\
2.3517e+00      7.9861e-04\\
1.6129e+00      3.0212e-04\\
1.2026e+00      1.2043e-04\\
8.3589e-01      4.6690e-05\\
};
\addlegendentry{$\ell=2$}

\addplot [color=blue, line width=2.0pt, mark=square*, mark options=solid]
  table[row sep=crcr]{
6.5218e+00      1.9000e-03\\
4.2648e+00      5.2616e-04\\
3.3142e+00      1.2189e-04\\
2.3517e+00      4.3656e-05\\
1.6129e+00      1.0859e-05\\
1.2026e+00      3.2162e-06\\
8.3589e-01      8.5627e-07\\
};
\addlegendentry{$\ell=3$}

\addplot [color=teal, line width=2.0pt, mark=square*, mark options=solid]
  table[row sep=crcr]{
6.5218e+00      2.4364e-04\\
4.2648e+00      4.5992e-05\\
3.3142e+00      8.9665e-06\\
2.3517e+00      2.5923e-06\\
1.6129e+00      4.3572e-07\\
1.2026e+00      9.1900e-08\\
8.3589e-01      1.8816e-08\\
};
\addlegendentry{$\ell=4$}

\node[right, align=left, text=black, font=\footnotesize]
at (axis cs:2.005,0.0045) {$2$};

\addplot [color=black, line width=1.5pt]
  table[row sep=crcr]{%
2.00   6.000e-03 \\
1.50   3.375e-03 \\
2.00   3.375e-03 \\
2.00   6.000e-03 \\
};

\node[right, align=left, text=black, font=\footnotesize]
at (axis cs:2.005,1.4e-04) {$3$};

\addplot [color=black, line width=1.5pt]
  table[row sep=crcr]{%
2.00   2.00e-04 \\
1.50   8.40e-05 \\
2.00   8.40e-05 \\
2.00   2.00e-04 \\
};

\node[right, align=left, text=black, font=\footnotesize]
at (axis cs:2.005,7.0e-6) {$4$};

\addplot [color=black, line width=1.5pt]
  table[row sep=crcr]{%
2.00   1.20e-05\\
1.50   3.78e-06\\
2.00   3.78e-06\\
2.00   1.20e-05\\
};

\node[right, align=left, text=black, font=\footnotesize]
at (axis cs:2.005,3.0e-7) {$5$};

\addplot [color=black, line width=1.5pt]
  table[row sep=crcr]{%
2.00   6.000e-07\\
1.50   1.423e-07\\
2.00   1.423e-07\\
2.00   6.000e-07\\
};

\end{axis}
\end{tikzpicture}
}
          \caption{Computed errors in $\mathbf{L}^2(\Omega)$-norm
          w.r.t.~the mesh size $h$.}
        \label{fig:LV_hconvergence_L2}
    \end{subfigure}
    \caption{Test case 1: Computed errors and convergence rates w.r.t.~the mesh size~$h$.}
    \label{fig:LV_hconvergence}
\end{figure}
Figures~\ref{fig:LV_hconvergence_e1} and~\ref{fig:LV_hconvergence_L2} show the computed errors in the energy norm and in the $\mathbf{L}^2(\Omega)$-norm, respectively, together with the corresponding convergence rates. In agreement with the theoretical a priori estimates, the observed rates match the expected orders $O(h^\ell)$ in the energy norm. Although not covered by the present analysis the optimal order $O(h^{\ell+1})$ in the $\mathbf{L}^2(\Omega)$-norm is observed. The observed rates are already consistent with the asymptotic regime over the considered range of mesh sizes.
\par
\begin{figure}
    \includegraphics[width=\textwidth]{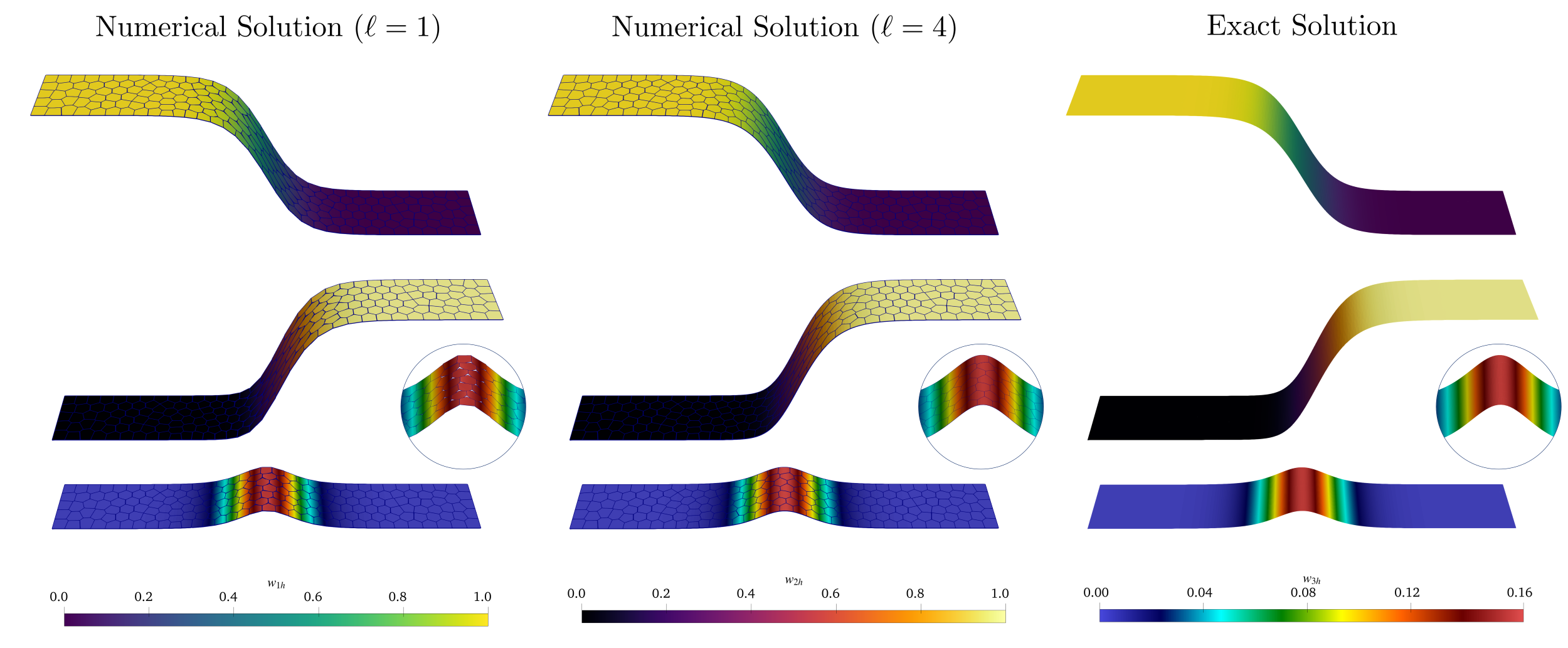}
    \caption{Test case 1: Qualitative representation of the numerical solutions for $\ell=1$ (first column) and $\ell=4$ (second column) together with the exact solution (third column) at time $T=10^{-3}$.}
    \label{fig:LV_solution}
\end{figure}
Figure~\ref{fig:LV_solution} provides a qualitative comparison between the numerical solutions obtained with polynomial degrees $\ell=1$ and $\ell=4$ on a mesh consisting of $200$ polygonal elements and the exact travelling-wave solution, for each of the three species $w_1$, $w_2$, and $w_3$. Although the low-order approximation with $\ell=1$ correctly captures the overall shape and propagation of the travelling wave, a noticeable loss of accuracy is observed in the transition region. This is particularly evident for $w_3$, where the enlarged view highlights a less sharp profile and small numerical oscillations. In contrast, the approximation obtained with $\ell=4$ is visually indistinguishable from the exact solution, including in the magnified region around the wave front. This qualitative agreement is consistent with the higher accuracy and convergence rates reported in Figures~\ref{fig:LV_hconvergence_e1} and~\ref{fig:LV_hconvergence_L2}.
\subsection{Test case 2: Two-species system with higher-order interaction terms}
\label{sec:tc2}
In this second test case, we consider the two-species reaction-diffusion system with higher-order terms. The problem reads:
\begin{equation}
\label{eq:tc2_equations}
    \begin{dcases}
        \frac{\partial w_1}{\partial t} = \Delta w_1 + 2 w_1^2\left(1-w_1\right), & \mathrm{in}\,\Omega\times(0,T], \\
        \frac{\partial w_2}{\partial t} = \Delta w_2 + w_2\left(w_1+w_2\right)-2w_1^2w_2, & \mathrm{in}\,\Omega\times(0,T], \\
    \end{dcases}
\end{equation}
and is equivalent to problem~\eqref{eq:general_strong}, setting
\begin{equation*}
   \mathbf{L} = \mathbf{0}, \qquad \mathbf{Q}\boldsymbol{w} = \begin{pmatrix}
        2w_1 & 0 \\
        0 & w_1+w_2
    \end{pmatrix},
\qquad
(\mathbb{C}\boldsymbol{w})\boldsymbol{w}
= \begin{pmatrix}
-2w_1^2 & 0 \\
0 & -2w_1^2
\end{pmatrix}.
\end{equation*}
where $\mathbf{0}$ is the null matrix. Considering suitable boundary and initial conditions, the analytical solution of this problem is $w_1 = w_2 = 0.5 \left(1-\tanh\left(0.5(x - t)\right)\right)$. The model in \eqref{eq:tc2_equations} is a manufactured example with analytical solution to study the properties of convergence of the proposed numerical method.
We employ the same computational setting as in the previous test case, considering $\Omega=(-30,30)\times(0,10)$, $T=10^{-3}$, and the same family of computational meshes. Dirichlet boundary conditions are derived from the exact solution. The spatial convergence is investigated for $\ell=1,2,3,4$, using $\Delta t=10^{-5}$ and the $\mathrm{IMEX}(4,6)$-$\mathrm{LD}_{p_3}$ scheme of~\cite{antonietti_optimized_2026}.
\par
Figures~\ref{fig:HOI_hconvergence_dG} and~\ref{fig:HOI_hconvergence_L2} display the computed errors in the dG and $L^2(\Omega)$ norms, respectively. The observed convergence rates agree with the expected orders, namely $O(h^\ell)$ in the dG norm and $O(h^{\ell+1})$ in the $L^2(\Omega)$ norm. Moreover, consistently with the a priori estimate established in Theorem~\ref{thm:apriori}, Figure~\ref{fig:HOI_hconvergence_e1} shows that the energy functional converges with order $O(h^\ell)$.
\begin{figure}[t!]
    \begin{subfigure}[b]{0.49\textwidth}
        \resizebox{\textwidth}{!}{\begin{tikzpicture}

\begin{axis}[%
    width           = 3.875in,
    height          = 2.500in,
    at              = {(2.6in,1.099in)},
    scale only axis,
    xmode           = log,
    xmin            = 0.83589,
    xmax            = 6.5218,
    xminorticks     = true,
    xlabel          = {$h$},
    ylabel          = {$\|\boldsymbol{w}(T) -\boldsymbol{w}_h(T)\|_{\mathrm{dG}}$},
    xticklabel      = {\pgfmathparse{exp(\tick)}\pgfmathprintnumber{\pgfmathresult}},
    x tick label style ={/pgf/number format/.cd, 
                         fixed, fixed zerofill,
                         precision=2},
    ymode           = log,
    ymin            = 2e-5,
    ymax            = 2.00,
    yminorticks     = true,
    axis background/.style  = {fill=white},
    title style     = {font=\bfseries},
    xmajorgrids,
    xminorgrids,
    ymajorgrids,
    yminorgrids,
    legend style    = {at={(0.96,0.35)},legend cell align=left, draw=white!15!black}]
              
\addplot [color=red, line width=2.0pt, mark=triangle*, mark options=solid]
  table[row sep=crcr]{%
   6.5218e+00   1.7794e+00 \\
   4.2648e+00   1.1357e+00 \\
   3.3142e+00   7.7360e-01 \\
   2.3517e+00   5.5685e-01 \\
   1.6129e+00   3.6242e-01 \\
   1.2026e+00   2.4912e-01 \\
   8.3589e-01   1.7600e-01 \\
};
\addlegendentry{$\ell=1$}

\addplot [color=violet, line width=2.0pt, mark=triangle*, mark options=solid]
  table[row sep=crcr]{
   6.5218e+00   5.7181e-01 \\
   4.2648e+00   4.1200e-01 \\
   3.3142e+00   2.0360e-01 \\
   2.3517e+00   1.0467e-01 \\
   1.6129e+00   5.3045e-02 \\
   1.2026e+00   2.2498e-02 \\
   8.3589e-01   1.2363e-02 \\
};
\addlegendentry{$\ell=2$}

\addplot [color=blue, line width=2.0pt, mark=triangle*, mark options=solid]
  table[row sep=crcr]{
   6.5218e+00   3.3797e-01 \\
   4.2648e+00   1.2498e-01 \\
   3.3142e+00   3.0801e-02 \\
   2.3517e+00   1.4056e-02 \\
   1.6129e+00   3.7433e-03 \\
   1.2026e+00   1.3102e-03 \\
   8.3589e-01   4.3621e-04 \\
};
\addlegendentry{$\ell=3$}

\addplot [color=teal, line width=2.0pt, mark=triangle*, mark options=solid]
  table[row sep=crcr]{
   6.5218e+00   6.9913e-02 \\
   4.2648e+00   2.5516e-02 \\
   3.3142e+00   5.2824e-03 \\
   2.3517e+00   1.4638e-03 \\
   1.6129e+00   3.3388e-04 \\
   1.2026e+00   9.3394e-05 \\
   8.3589e-01   2.1090e-05 \\
};
\addlegendentry{$\ell=4$}

\node[right, align=left, text=black, font=\footnotesize]
at (axis cs:2.005,0.210) {$1$};

\addplot [color=black, line width=1.5pt]
  table[row sep=crcr]{%
2.00   2.40e-01 \\
1.50   1.80e-01 \\
2.00   1.80e-01 \\
2.00   2.40e-01 \\
};

\node[right, align=left, text=black, font=\footnotesize]
at (axis cs:2.005,2.8e-02) {$2$};

\addplot [color=black, line width=1.5pt]
  table[row sep=crcr]{%
2.00   4.00e-02 \\
1.50   2.25e-02 \\
2.00   2.25e-02 \\
2.00   4.00e-02 \\
};

\node[right, align=left, text=black, font=\footnotesize]
at (axis cs:2.005,2.00e-3) {$3$};

\addplot [color=black, line width=1.5pt]
  table[row sep=crcr]{%
2.00   3.20e-03 \\
1.50   1.34e-03 \\
2.00   1.34e-03 \\
2.00   3.20e-03 \\
};

\node[right, align=left, text=black, font=\footnotesize]
at (axis cs:2.005,1.4e-4) {$4$};

\addplot [color=black, line width=1.5pt]
  table[row sep=crcr]{%
2.00   2.40e-04\\
1.50   7.56e-05\\
2.00   7.56e-05\\
2.00   2.40e-04\\
};

\end{axis}
\end{tikzpicture}
}
        \caption{Computed errors in dG norm w.r.t.~the mesh size~$h$.}
        \label{fig:HOI_hconvergence_dG}
    \end{subfigure}
    \begin{subfigure}[b]{0.49\textwidth}
        \resizebox{\textwidth}{!}{\begin{tikzpicture}

\begin{axis}[%
    width           = 3.875in,
    height          = 2.500in,
    at              = {(2.6in,1.099in)},
    scale only axis,
    xmode           = log,
    xmin            = 0.83589,
    xmax            = 6.5218,
    xminorticks     = true,
    xlabel          = {$h$},
    ylabel          = {$\|\boldsymbol{w}(T) -\boldsymbol{w}_h(T)\|_{\mathbf{L}^2(\Omega)}$},
    xticklabel      = {\pgfmathparse{exp(\tick)}\pgfmathprintnumber{\pgfmathresult}},
    x tick label style ={/pgf/number format/.cd, 
                         fixed, fixed zerofill,
                         precision=2},
    ymode           = log,
    ymin            = 1e-7,
    ymax            = 1e-0,
    yminorticks     = true,
    axis background/.style  = {fill=white},
    title style     = {font=\bfseries},
    xmajorgrids,
    xminorgrids,
    ymajorgrids,
    yminorgrids,
    legend style    = {at={(0.96,0.35)},legend cell align=left, draw=white!15!black}]
              
\addplot [color=red, line width=2.0pt, mark=square*, mark options=solid]
  table[row sep=crcr]{%
   6.5218e+00   3.7570e-01 \\
   4.2648e+00   1.7691e-01 \\
   3.3142e+00   9.0917e-02 \\
   2.3517e+00   5.0612e-02 \\
   1.6129e+00   2.4696e-02 \\ 
   1.2026e+00   1.2461e-02 \\
   8.3589e-01   6.4039e-03 \\
};
\addlegendentry{$\ell=1$}

\addplot [color=violet, line width=2.0pt, mark=square*, mark options=solid]
  table[row sep=crcr]{
   6.5218e+00   8.2190e-02 \\
   4.2648e+00   3.4908e-02 \\
   3.3142e+00   1.2246e-02 \\
   2.3517e+00   5.2711e-03 \\
   1.6129e+00   2.0120e-03 \\
   1.2026e+00   8.3526e-04 \\
   8.3589e-01   3.1506e-04 \\
};
\addlegendentry{$\ell=2$}

\addplot [color=blue, line width=2.0pt, mark=square*, mark options=solid]
  table[row sep=crcr]{
   6.5218e+00   2.5063e-02 \\
   4.2648e+00   7.1427e-03 \\
   3.3142e+00   1.7477e-03 \\
   2.3517e+00   6.9117e-04 \\
   1.6129e+00   1.5438e-04 \\
   1.2026e+00   4.9007e-05 \\
   8.3589e-01   1.1621e-05 \\
};
\addlegendentry{$\ell=3$}

\addplot [color=teal, line width=2.0pt, mark=square*, mark options=solid]
  table[row sep=crcr]{
   6.5218e+00   5.7045e-03 \\
   4.2648e+00   1.4710e-03 \\
   3.3142e+00   2.6693e-04 \\
   2.3517e+00   7.8002e-05 \\
   1.6129e+00   1.2716e-05 \\
   1.2026e+00   3.0966e-06 \\
   8.3589e-01   5.5439e-07 \\
};
\addlegendentry{$\ell=4$}

\node[right, align=left, text=black, font=\footnotesize]
at (axis cs:2.005,1.20e-02) {$2$};

\addplot [color=black, line width=1.5pt]
  table[row sep=crcr]{%
2.00   1.45e-02 \\
1.50   8.16e-03 \\
2.00   8.16e-03 \\
2.00   1.45e-02 \\
};

\node[right, align=left, text=black, font=\footnotesize]
at (axis cs:2.005,8.50e-04) {$3$};

\addplot [color=black, line width=1.5pt]
  table[row sep=crcr]{%
2.00   1.20e-03 \\
1.50   5.06e-04 \\
2.00   5.06e-04 \\
2.00   1.20e-03 \\
};

\node[right, align=left, text=black, font=\footnotesize]
at (axis cs:2.005,1.00e-04) {$4$};

\addplot [color=black, line width=1.5pt]
  table[row sep=crcr]{%
2.00   1.60e-04 \\
1.50   5.06e-05 \\
2.00   5.06e-05 \\
2.00   1.60e-04 \\
};

\node[right, align=left, text=black, font=\footnotesize]
at (axis cs:2.005,6.60e-06) {$5$};

\addplot [color=black, line width=1.5pt]
  table[row sep=crcr]{%
2.00   1.440e-05 \\
1.50   3.416e-06 \\
2.00   3.416e-06 \\
2.00   1.440e-05 \\
};

\end{axis}
\end{tikzpicture}
}
          \caption{Computed errors in $\mathbf{L}^2(\Omega)$-norm
          w.r.t.~the mesh size $h$.}
        \label{fig:HOI_hconvergence_L2}
    \end{subfigure}
    \begin{subfigure}[b]{0.49\textwidth}
        \resizebox{\textwidth}{!}{\begin{tikzpicture}

\begin{axis}[%
    width           = 3.875in,
    height          = 2.500in,
    at              = {(2.6in,1.099in)},
    scale only axis,
    xmode           = log,
    xmin            = 0.83589,
    xmax            = 6.5218,
    xminorticks     = true,
    xlabel          = {$h$},
    ylabel          = {$\|\boldsymbol{w}(T) -\boldsymbol{w}_h(T)\|_{e_1}$},
    xticklabel      = {\pgfmathparse{exp(\tick)}\pgfmathprintnumber{\pgfmathresult}},
    x tick label style ={/pgf/number format/.cd, 
                         fixed, fixed zerofill,
                         precision=2},
    ymode           = log,
    ymin            = 5e-7,
    ymax            = 5e-1,
    yminorticks     = true,
    axis background/.style  = {fill=white},
    title style     = {font=\bfseries},
    xmajorgrids,
    xminorgrids,
    ymajorgrids,
    yminorgrids,
    legend style    = {at={(0.96,0.35)},legend cell align=left, draw=white!15!black}]
              
\addplot [color=red, line width=2.0pt, mark=*, mark options=solid]
  table[row sep=crcr]{%
   6.5218e+00   3.7990e-01 \\
   4.2648e+00   1.8054e-01 \\
   3.3142e+00   9.4182e-02 \\
   2.3517e+00   5.3627e-02 \\
   1.6129e+00   2.7268e-02 \\
   1.2026e+00   1.4776e-02 \\
   8.3589e-01   8.5049e-03 \\ 
};
\addlegendentry{$\ell=1$}

\addplot [color=violet, line width=2.0pt, mark=*, mark options=solid]
  table[row sep=crcr]{
   6.5218e+00   8.4250e-02 \\
   4.2648e+00   3.7426e-02 \\
   3.3142e+00   1.4073e-02 \\
   2.3517e+00   6.4752e-03 \\
   1.6129e+00   2.8044e-03 \\
   1.2026e+00   1.1948e-03 \\
   8.3589e-01   5.3598e-04 \\
};
\addlegendentry{$\ell=2$}

\addplot [color=blue, line width=2.0pt, mark=*, mark options=solid]
  table[row sep=crcr]{
   6.5218e+00   2.7792e-02 \\
   4.2648e+00   8.5062e-03 \\
   3.3142e+00   2.1643e-03 \\
   2.3517e+00   8.8918e-04 \\
   1.6129e+00   2.0899e-04 \\
   1.2026e+00   7.0009e-05 \\
   8.3589e-01   1.9148e-05 \\
};
\addlegendentry{$\ell=3$}

\addplot [color=teal, line width=2.0pt, mark=*, mark options=solid]
  table[row sep=crcr]{
   6.5218e+00   6.3985e-03 \\
   4.2648e+00   1.7608e-03 \\
   3.3142e+00   3.2552e-04 \\
   2.3517e+00   1.0029e-04 \\
   1.6129e+00   1.7996e-05 \\
   1.2026e+00   4.6980e-06 \\
   8.3589e-01   9.3449e-07 \\
};
\addlegendentry{$\ell=4$}

\node[right, align=left, text=black, font=\footnotesize]
at (axis cs:2.03,1.1875e-02) {$1$};

\addplot [color=black, line width=1.5pt]
  table[row sep=crcr]{%
1.95   1.275e-02 \\
1.40   9.150e-03 \\
1.95   9.150e-03 \\
1.95   1.275e-02 \\
};

\node[right, align=left, text=black, font=\footnotesize]
at (axis cs:2.03,2.30e-03) {$2$};

\addplot [color=black, line width=1.5pt]
  table[row sep=crcr]{%
1.95   2.75e-03 \\
1.40   1.42e-03 \\
1.95   1.42e-03 \\
1.95   2.75e-03 \\
};

\node[right, align=left, text=black, font=\footnotesize]
at (axis cs:2.03,1.55e-04) {$3$};

\addplot [color=black, line width=1.5pt]
  table[row sep=crcr]{%
1.95   2.00e-04 \\
1.40   7.40e-05 \\
1.95   7.40e-05 \\
1.95   2.00e-04 \\
};

\node[right, align=left, text=black, font=\footnotesize]
at (axis cs:2.03,8.00e-06) {$4$};

\addplot [color=black, line width=1.5pt]
  table[row sep=crcr]{%
1.95   1.50e-05 \\
1.40   4.00e-06 \\
1.95   4.00e-06 \\
1.95   1.50e-05 \\
};

\end{axis}
\end{tikzpicture}
}
        \caption{Computed errors in energy functional w.r.t.~the mesh size~$h$.}
        \label{fig:HOI_hconvergence_e1}
    \end{subfigure}    
    \begin{subfigure}[b]{0.49\textwidth}
        \resizebox{\textwidth}{!}{\begin{tikzpicture}

\begin{axis}[%
    width           = 3.875in,
    height          = 2.500in,
    at              = {(2.6in,1.099in)},
    scale only axis,
    xmin            = 1,
    xmax            = 10,
    xminorticks     = true,
    xlabel          = {$\ell$},
    ylabel          = {Errors at final time},
    ymode           = log,
    ymin            = 5e-7,
    ymax            = 5e-0,
    yminorticks     = true,
    axis background/.style  = {fill=white},
    title style     = {font=\bfseries},
    xmajorgrids,
    xminorgrids,
    ymajorgrids,
    yminorgrids,
    legend style    = {at={(0.96,0.96)},legend cell align=left, draw=white!15!black}]

\addplot [color=purple, line width=2.0pt, mark=square*, mark options=solid]
  table[row sep=crcr]{%
    1       0.37570000 \\
    2       0.08220000 \\
    3       0.02510000 \\
    4       0.00570000 \\
    5       0.00180000 \\
    6       4.7543e-04 \\
    7       1.1152e-04 \\
    8       3.2413e-05 \\
    9       9.4669e-06 \\
    10      1.5817e-06 \\
};
\addlegendentry{$\|\boldsymbol{w}(T) -\boldsymbol{w}_h(T)\|_{\mathbf{L}^2(\Omega)}$}

\addplot [color=orange, line width=2.0pt, mark=triangle*, mark options=solid]
  table[row sep=crcr]{%
    1       1.77940000 \\
    2       0.57180000 \\
    3       0.33800000 \\
    4       0.06990000 \\
    5       0.01980000 \\
    6       0.00570000 \\
    7       0.00140000 \\
    8       5.1642e-04 \\
    9       1.4731e-04 \\
    10      2.9770e-05 \\
};
\addlegendentry{$\|\boldsymbol{w}(T) -\boldsymbol{w}_h(T)\|_{\mathrm{dG}}$}
              
\addplot [color=cyan, line width=2.0pt, mark=*, mark options=solid, dashed]
  table[row sep=crcr]{%
    1       0.37990000 \\
    2       0.08420000 \\
    3       0.02780000 \\
    4       0.00640000 \\
    5       0.00190000 \\
    6       5.3556e-04 \\
    7       1.2555e-04 \\
    8       3.7658e-05 \\
    9       1.0955e-05 \\
    10      1.9152e-06 \\
};
\addlegendentry{$\|\boldsymbol{w}(T) -\boldsymbol{w}_h(T)\|_{e_1}$}

\end{axis}
\end{tikzpicture}}
        \caption{Computed errors w.r.t.~the polynomial degree~$\ell$.}
        \label{fig:HOI_pconvergence}
    \end{subfigure}
    \caption{Test case 2: Computed errors and convergence rates w.r.t.~the mesh size~$h$ and polynomial degree $\ell$.}
    \label{fig:HOI_hconvergence}
\end{figure}
Finally, we investigate the convergence with respect to the polynomial degree by considering $\ell=1,\ldots,10$ on a fixed polygonal mesh consisting of $50$ elements. The corresponding results are reported in Figure~\ref{fig:HOI_pconvergence}. Although $p$-convergence is not covered by the present theoretical analysis, the errors decay rapidly as the polynomial degree increases, consistently with the smoothness of the analytical solution. This result highlights the effectiveness of high-order local polynomial approximations for the smooth travelling-wave solution considered in this test.
\par
This test simultaneously assesses the approximation properties of the proposed scheme in the presence of both self-interaction and cross-species higher-order reaction terms. The agreement between the observed and predicted $h$-convergence rates confirms that the cubic interaction does not compromise the optimal spatial accuracy of the method.
\subsection{Test case 3: A heterogeneous double Fisher--Kolmogorov system}
\label{sec:tc3}
As a final test case, we consider a system of two coupled Fisher--Kolmogorov equations:
\begin{equation}
\label{eq:doublefkpp}
    \begin{dcases}
        \dfrac{\partial w_1}{\partial t}
        =
        \nabla\cdot\left(\mu\nabla w_1\right)
        +
        w_1\left(1-w_1\right),
        & \mathrm{in}\,\Omega\times(0,T],
        \\
        \dfrac{\partial w_2}{\partial t}
        =
        \nabla\cdot\left(\mu\nabla w_2\right)
        +
        \left(1+w_1\right)w_2\left(1-w_2\right),
        & \mathrm{in}\,\Omega\times(0,T].
    \end{dcases}
\end{equation}
This system has been used, for instance, to model the spreading of \textit{prion-like} proteins in Alzheimer's disease; see~\cite{vazquez-palomo_computational_2026}.

We consider the square domain $\Omega=(-1,1)^2$ and the final time $T=36$. Homogeneous Neumann boundary conditions are imposed on $\partial\Omega$. The computational domain is discretized by a polygonal mesh consisting of $2500$ elements, generated with \textsc{PolyMesher}~\cite{talischi_polymesher_2012}. The spatial discretization employs polynomial degree $\ell=4$, while time integration is performed with the fourth-order $\mathrm{IMEX}(4,6)$-$\mathrm{LD}_{p_3}$ scheme introduced in~\cite{antonietti_optimized_2026}, using the time step $\Delta t=10^{-1}$.

To assess the robustness of the SWIP method in the presence of heterogeneous diffusion, we prescribe the discontinuous diffusion coefficient
\begin{equation}
\label{eq:doublefkpp_diffusion}
    \mu(x,y)
    =
    \begin{dcases}
        5\times10^{-3}, & x<0, \\
        5\times10^{-4}, & x\geq 0.
    \end{dcases}
\end{equation}
Finally, to investigate the effect of the first population $w_1$ on the growth of the second population $w_2$, we prescribe the initial conditions
\begin{equation}
\label{eq:doublefkpp_initial_data}
\begin{aligned}
    w_{1,0}(x,y)
    &=
    0.2\exp\left(
        -100\left((x-0.5)^2+(y+0.5)^2\right)
    \right)
    +
    0.2\exp\left(
        -100\left((x+0.5)^2+(y+0.5)^2\right)
    \right),
    \\
    w_{2,0}(x,y)
    &=
    0.5\exp\left(
        -100\left(x^2+y^2\right)
    \right),
\end{aligned}
\end{equation}
reported in Figure \ref{fig:DFKPP_solution} (first column).
\begin{figure}
    \includegraphics[width=\textwidth]{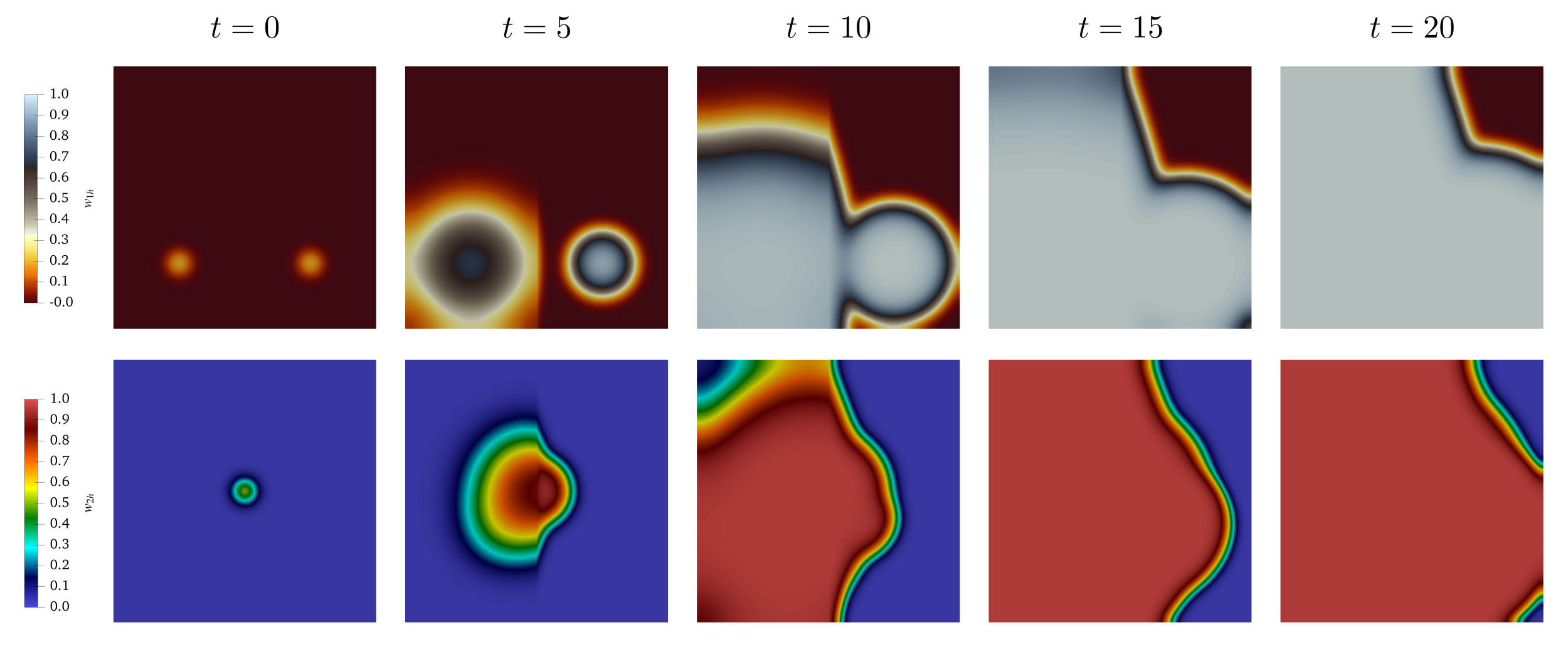}
    \caption{Test case 3: Qualitative representation of the numerical solutions for $w_{1h}$ (first row) and $w_{2h}$ (second row) for a sequence of time snapshots $t=0,5,10,15,20$.}
    \label{fig:DFKPP_solution}
\end{figure}
\par
Figure~\ref{fig:DFKPP_solution} illustrates the evolution of the two populations at $t=0,5,10,15$, and $20$. Initially, $w_1$ consists of two localized concentrations in the lower part of the domain, whereas $w_2$ is concentrated around the center. The different diffusion coefficients across the interface $x=0$ produce visibly asymmetric propagation: the population fronts advance more rapidly in the left subdomain, where $\mu$ is 10 times larger than in the right subdomain. The sharp transition layers across the discontinuity of the diffusion coefficient are resolved without visible spurious oscillations due to the robustness provided by the SWIP discretization method. 
\par
The lower and upper portions of the domain act as two distinct local regimes for the evolution of $w_2$. In the lower region, the presence of the initially localized $w_1$ concentrations modifies the reaction dynamics of $w_2$ through the coupling factor $(1+w_1)$, whereas the upper region is initially characterized by the standard Fisher--Kolmogorov kinetics. The interaction factor $(1+w_1)$ modifies the local Fisher--Kolmogorov dynamics of $w_2$ in regions occupied by $w_1$. Its expansion contains the cubic contribution $-w_1w_2^2$, which provides a density-dependent saturation mechanism. The snapshots illustrate the qualitative effect of the full coupled reaction dynamics in the presence of discontinuous diffusion.
\section{Conclusions}
\label{sec:conclusion}
In this work, we introduced and analyzed a weighted discontinuous Galerkin method for hierarchically coupled nonlinear reaction--diffusion systems with linear, quadratic, and cubic reaction terms. A dissipative diagonal structure is assumed for the intra-block cubic interactions, providing the key mechanism for controlling the nonlinear reaction contributions. The proposed framework accommodates multiple interacting populations and heterogeneous, possibly discontinuous or anisotropic, diffusion tensors on general polygonal meshes.
\par
The spatial discretization is based on a weighted symmetric interior penalty formulation, for which we establish coercivity and continuity of the discrete diffusion bilinear form. We then derive local-in-time stability estimates for the semi-discrete problem through a recursive argument along the hierarchy of population blocks. In particular, the stability bounds for each block depend explicitly on the estimates obtained for the preceding blocks, consistently with the one-way coupling structure of the model. Moreover, an a priori error estimate in a combined $\mathbf{L}^2(\Omega)$--dG energy norm is proved for sufficiently regular solutions. The numerical experiments support the theoretical results, confirming the expected convergence behaviour for both classical and higher-order reaction--diffusion systems. Moreover, the coupled Fisher--Kolmogorov test with discontinuous diffusion coefficients illustrates the robustness of the weighted interior penalty formulation in the presence of heterogeneous diffusion and nonlinear inter-population coupling.
\par
Extending the analysis to three spatial dimensions remains an open challenge, since the present proof relies on Sobolev embeddings that do not provide the required control of the higher Lebesgue norms arising from cubic reaction terms. Several further directions warrant investigation. A fully discrete stability and error analysis for the IMEX--Runge--Kutta time discretization would complement the present semi-discrete theory. Other natural developments include the design of adaptive $hp$-PolyDG strategies for localized reaction fronts and heterogeneous media, as well as the application of the proposed framework to more detailed multi-species ecological, epidemiological, and neurodegenerative-disease models.

\appendix
\section{Recursive coefficients in the block stability estimates}
\label{app:recursive-coefficients}

This appendix collects the nonnegative coefficient functions entering the
stability estimates for the $p$-th block. Besides shortening the proof of
Theorem~\ref{thm:stab_DG_p}, these definitions make explicit the
recursive dependence on the stability bounds of the preceding blocks. For
$p=1$, all the sums below are understood to be empty and hence equal to zero.

\subsection{Coefficients for the $L^2$-energy estimate}

For $p\ge2$ and almost every $t$, define
\begin{equation}
\label{eq:A-coefficients}
\begin{aligned}
    A_{4,3}^{(p)}(\tau) & =\frac{15^3}{2} \CGN{4}^{12} c_\infty^4 C_{E_4}^4 \sum_{q=1}^{p-1} \Cen{2}{q}(\tau)^2, && \quad A_{3,2}^{(p)}(\tau) = \frac{16\sqrt{10}}{3\sqrt{3}} \CGN{3}^{3} \sum_{q=1}^{p-1} \left( q_\infty C_{E_3}  \sqrt{\Cen{2}{q}(\tau)} \right)^{\frac{3}{2}}, \\
    A_{4,2}^{(p)}(\tau) & = 10 \CGN{4}^{4} c_\infty^2 C_{E_4}^4  \sum_{q,s=1}^{p-1} \Cen{2}{q}(\tau)\Cen{2}{s}(\tau),
    && \quad A_{3,1}^{(p)}(\tau) = \frac{5}{6} \CGN{3}^{\frac{6}{5}} \left(\frac{5}{3}\right)^{\frac{1}{5}} \sum_{q,s=1}^{p-1}\left(4q_\infty C_{E_3}^2 \sqrt{\Cen{2}{q}(\tau)\Cen{2}{s}(\tau)} \right)^{\frac{6}{5}},
    \\
    A_{2,1}^{(p)}(\tau) & = l_\infty \sum_{q=1}^{p-1} \sqrt{\Cen{1}{q}(\tau)},
    && \quad A_{4,1}^{(p)}(\tau) = \frac{3}{4}
    \CGN{4}^{\frac{4}{3}} \left(\frac{5}{2}\right)^{\frac{1}{3}} c_\infty^{\frac{4}{3}}C_{E_4}^{4} \sum_{q,s,r=1}^{p-1} \left[8\Cen{2}{q}(\tau) \Cen{2}{s}(\tau) \Cen{2}{r}(\tau) \right]^{\frac{2}{3}},
\end{aligned}
\end{equation}
These functions collect the terms produced by the Gagliardo--Nirenberg and
Young inequalities in the proof of the first estimate of
Theorem~\ref{thm:stab_DG_p}. We then set
\begin{equation}
\label{eq:K-coefficients}
\begin{aligned}
 K_1^{(p)}(t)
 &:=%
 \frac{8}{27}q_\infty^2 C_{\mathrm{GN},3}^{6}t
 +\int_0^t\left[
   \frac14 A_{2,1}^{(p)}(\tau)
   +\frac{3\sqrt[3]{4}}{5\sqrt[3]{25}}A_{3,1}^{(p)}(\tau)
   +\frac{2}{3\sqrt3}A_{4,1}^{(p)}(\tau)
 \right]d\tau,
\\
 K_2^{(p)}(t)
 &:=%
 A_{2,1}^{(p)}(t)+A_{3,1}^{(p)}(t)+A_{4,1}^{(p)}(t)
 +A_{3,2}^{(p)}(t)+A_{4,2}^{(p)}(t),
\\
K_3^{(p)}(t)
 &:=%
 A_{4,3}^{(p)}(t).
\end{aligned}
\end{equation}
\subsection{Coefficients for the higher-energy estimate}
The terms depending only on the preceding blocks are collected in
\begin{equation}
\label{eq:D-coefficients}
\begin{aligned}
 D_1^{(p)}(t)
 &:=%
 \int_{0}^{t} \left( 6 l_\infty^2\sum_{q=1}^{p-1} \Cen{1}{q}(\tau) + 64 q_\infty^2
    \sum_{q,s=1}^{p-1} \Cen{2}{s}(\tau) \Cen{2}{q}(\tau) C_{E_4}^2 + \sum_{q,s,r=1}^{p-1} 128 c_\infty^2 \Cen{2}{r}(\tau) \Cen{2}{s}(\tau) \Cen{2}{q}(\tau) C_{E_6}^3\right) \dtau,
\\
 D_2^{(p)}(t)
 &:= 24 q_\infty^2 \sum_{q=1}^{p-1} \Cen{2}{q}(t) C_{E_4} + 64 c_\infty^2 \sum_{s,q=1}^{p-1} \Cen{2}{s}(t) \Cen{2}{q}(t) C_{E_8}^2 C_{E_4},
\\
D_3^{(p)}(t)
 &:= 24 c_\infty^2 \sum_{q=1}^{p-1} \Cen{2}{q}(t) C_{E_8}.
\end{aligned}
\end{equation}
The functions in \eqref{eq:K-coefficients} and
\eqref{eq:D-coefficients} are nonnegative and depend only on the already
controlled blocks $1,\ldots,p-1$. Hence, they are known at the $p$th step of
the recursive argument.

\bibliographystyle{hieeetr}
\bibliography{bibliography.bib}

@article{bonetti_robust_2025,
  author  = {Bonetti, Stefano and Botti, Michele and Antonietti, Paola F.},
  title   = {Robust discontinuous {G}alerkin-based scheme for the fully-coupled nonlinear thermo-hydro-mechanical problem},
  journal = {IMA Journal of Numerical Analysis},
  volume  = {45},
  number  = {3},
  year    = {2025}, 
  pages   = {1786--1820},
  doi     = {10.1093/imanum/drae045},
}

@article{antonietti_structure-preserving_2026,
    author  = {Antonietti, P. F. and Corti, M. and G\'omez, S. and Perugia, I.},
    title   = {A structure-preserving {LDG} discretization of the {F}isher-{K}olmogorov equation for modeling neurodegenerative diseases},
    journal = {Mathematics and Computers in Simulation},
    volume  = {241},
    year    = {2026},
    pages   = {351--366},
    doi     = {10.1016/j.matcom.2025.09.006},
}

@article{pierre_schmitt_blowup_2000,
  author  = {Pierre, Michel and Schmitt, Didier},
  title   = {Blowup in Reaction-Diffusion Systems with Dissipation of Mass},
  journal = {SIAM Review},
  volume  = {42},
  number  = {1},
  pages   = {93--106},
  year    = {2000},
  doi     = {10.1137/S0036144599350619},
}

@article{souplet_global_2018,
  author  = {Souplet, Philippe},
  title   = {Global Existence for Reaction--Diffusion Systems with Dissipation of Mass and Quadratic Growth},
  journal = {Journal of Evolution Equations},
  volume  = {18},
  number  = {3},
  pages   = {1713--1720},
  year    = {2018},
  doi     = {10.1007/s00028-018-0440-8},
}

@article{pierre_global_2010,
  author  = {Pierre, Michel},
  title   = {Global Existence in Reaction-Diffusion Systems with Control of Mass: A Survey},
  journal = {Milan Journal of Mathematics},
  volume  = {78},
  number  = {2},
  pages   = {417--455},
  year    = {2010},
  doi     = {10.1007/s00032-010-0133-4},
}

@article{sing_higher_2021,
    title       = {Higher order interactions and species coexistence},
    author      = {Singh, P. and Baruah, G.},
    journal     = {Theoretical Ecology},
    year        = {2021},
    volume      = {14},
    pages       = {71–-83},
    doi         = {10.1007/s12080-020-00481-8},
}

@article{letten_mechanistic_2019,
  author  = {Letten, Andrew D. and Stouffer, Daniel B.},
  title   = {The mechanistic basis for higher-order interactions and non-additivity in competitive communities},
  journal = {Ecology Letters},
  volume  = {22},
  number  = {3},
  pages   = {423--436},
  year    = {2019},
  doi     = {10.1111/ele.13211},
}

@article{grilli_higher-order_2017,
    title       = {Higher-order interactions stabilize dynamics in competitive network models},
    author      = {Grilli, J. and Barabás, G. and Michalska-Smith, M.J. and Allesina, S.},
    journal     = {Nature},
    year        = {2017},
    volume      = {548},
    pages       = {210–-213},
    doi         = {10.1038/nature23273},
}

@article{terry_impact_2025,
    title       = {The impact of structured higher‑order interactions on ecological network stability},
    author      = {Terry, J.C.D. and Bonsall, M.B. and Morris, R.J.},
    journal     = {Theoretical Ecology},
    year        = {2025},
    volume      = {18},
    number      = {9},
    pages       = {1--12},
    doi         = {10.1007/s12080-025-00603-0},
}

@article{wootton_nature_1994,
    title       = {The nature and consequences of indirect effects in ecological communities},
    author      = {Wootton, J.T.},
    journal     = {Annual Review of Ecology, Evolution, and Systematics},
    year        = {1994},
    volume      = {25},
    pages       = {443--466},
    doi         = {10.1146/annurev.es.25.110194.002303},
}

@article{vazquez-palomo_computational_2026,
    title       = {{A computational framework to predict the spreading of Alzheimer’s disease}},
    author      = {Vazquez-Palomo, A. and Betegón, C. and Weickenmeier, J. and Martínez-Pañeda, E.},
    journal     = {Engineering with Computers},
    year        = {2026},
    volume      = {42},
    number      = {78},
    pages       = {1--25},
    doi         = {10.1007/s00366-026-02313-5},
}

@article{antonietti_lymph_2025,
    title       = {{lymph: discontinuous poLYtopal methods for multi-physics differential problems}},
    author      = {Antonietti, P.F. and Bonetti, S. and Botti, M. and Corti, M. and Fumagalli, I. and Mazzieri, I.},
    journal     = {Transactions on Mathematical Software},
    year        = {2025},
    volume      = {51},
    number      = {1},
    pages       = {1--22},
    doi         = {10.1145/3716310},
}

@article{talischi_polymesher_2012,
    author      = {Talischi, C. and Paulino, G. H. and Anderson, P. and Menezes, Ivan F. M.},
    title       = {{\tt {P}oly{M}esher}: a general-purpose mesh generator for polygonal elements written in {M}atlab},
    journal     = {Structural and Multidisciplinary Optimization},
    volume      = {45},
    year        = {2012},
    number      = {3},
    pages       = {309--328},
    doi         = {10.1007/s00158-011-0706-z},
}

@article{chen_exact_2012,
    title       = {Exact travelling wave solutions of three-species competition--diffusion systems},
    author      = {Chen, C-C and Hung, L-H and Mimura, M. and Ueyama, D.},
    journal     = {Discrete and Continuous Dynamical Systems - Series B},
    year        = {2012},
    volume      = {17},
    number      = {8},
    pages       = {2653--2669},
    doi         = {10.3934/dcdsb.2012.17.2653},
}

@misc{antonietti_optimized_2026,
      title         = {Optimized high-order {IMEX-RK} schemes for degenerate diffusion-reaction problems with application to travelling waves phenomena}, 
      author        = {Antonietti, P.F. and Corti, M. and Orlando, G.},
      year          = {2026},
      eprint        = {2606.15726},
      archivePrefix = {arXiv},
      primaryClass  = {math.NA},
      doi           = {10.48550/arXiv.2606.15726},
}

@book{smoller_shock_1994,
    title       = {Shock {Waves} and {Reaction}—{Diffusion} {Equations}},
    publisher   = {Springer},
    author      = {Smoller, J.},
    year        = {1994},
    address     = {New York, NY},
    doi         = {10.1007/978-1-4612-0873-0},
}

@article{babuska_p_1994,
    title       = {The \textit{p} and \textit{h-p} versions of the Finite Element Method, Basic Principles and Properties},
    author      = {Babu\v{s}ka, I. and Suri, M.},
    journal     = {SIAM Review},
    year        = {1994},
    volume      = {36},
    number      = {4},
    pages       = {578--632},
    doi         = {10.1137/1036141},
}

@article{gazca-orozco_stability_2025,
  	title      	= {On the stability and convergence of discontinuous {G}alerkin schemes for incompressible flows},
  	author     	= {Gazca-Orozco, P.A. and Kaltenbach, A.},
  	journal    	= {IMA Journal of Numerical Analysis},
  	year       	= {2025},
  	volume     	= {45},
    number      = {1},
  	pages      	= {243--282},
  	doi        	= {10.1093/imanum/drae004},
}

@article{lotfi_partial_2014,
	title      = {Partial Differential Equations of an Epidemic Model with Spatial Diffusion},
	author     = {Lotfi, E.M. and Maziane, M. and Hattaf, K. and Yousfi, N.},
	journal    = {International Journal of Partial Differential Equations},
	year       = {2014},
	volume     = {1},
	pages      = {186437},
	doi        = {10.1155/2014/186437},
}

@article{he_effects_2013,
	title      = {The effects of diffusion and spatial variation in {Lotka}–{Volterra} competition–diffusion system {I}: {Heterogeneity} vs. homogeneity},
	journal    = {Journal of Differential Equations},
	author     = {He, X. and Ni, W.-M.},
	year       = {2013},
	volume     = {254},
	number     = {2},
	pages      = {528--546},
	doi        = {10.1016/j.jde.2012.08.032},
}

@article{fornari_spatially-extended_2020,
    title       = {Spatially-extended nucleation-aggregation-fragmentation models for the dynamics of prion-like neurodegenerative protein-spreading in the brain and its connectome},
    author      = {Fornari, S. and Sch\"afer, A. and Kuhl, E. and Goriely, A.},
    journal     = {Journal of Theoretical Biology},
    year        = {2020},
    volume      = {486},
    pages       = {110102},
    doi         = {https://doi.org/10.1016/j.jtbi.2019.110102},
}

@article{corti_discontinuous_2023,
	title      = {Discontinuous {Galerkin} methods for {Fisher}–{Kolmogorov} equation with application to α-synuclein spreading in {Parkinson}’s disease},
	author     = {Corti, M. and Bonizzoni, F. and Dede’, L. and Quarteroni, A. M. and Antonietti, P. F.},
	journal    = {Computer Methods in Applied Mechanics and Engineering},
	year       = {2023},
	volume     = {417},
	pages      = {116450},
	doi        = {10.1016/j.cma.2023.116450},
}

@article{fornari_prion-like_2019,
    	title          = {Prion-like spreading of {Alzheimer}'s disease within the brain's connectome},
    	author         = {Fornari, S. and Sch{\"a}fer, A. and Jucker, M. and Goriely, A. and Kuhl, E.},
    	journal        = {Journal of The Royal Society Interface},
    	year           = {2019},
    	volume         = {16},
    	number     	   = {159},
   	    pages     	   = {20190356},
    	doi            = {10.1098/rsif.2019.0356},
}

@book{di_pietro_hybrid_2020, 
	title          = {The {H}ybrid {H}igh-{O}rder {M}ethod for {P}olytopal {M}eshes: {D}esign, {A}nalysis, and {A}pplications},
	author         = {Di Pietro, D. A. and Droniou, J.},
    year           = {2020},
    volume         = {19},
    publisher      = {Springer},
    address        = {Cham},
    address        = {Berlin, Heidelberg},
	doi		       = {10.1007/978-3-030-37203-3}
}

@book{di_pietro_mathematical_2012, 
	title          = {Mathematical {A}spects of {D}iscontinuous {G}alerkin {M}ethods},
	author         = {Di Pietro, D. A. and Ern, A.},
	year           = {2012},    
    volume         = {69},
    publisher      = {Springer},
    address        = {Berlin, Heidelberg},
	doi		       = {10.1007/978-3-642-22980-0}
}

@article{arnoldUnifiedAnalysisDiscontinuous2001,
	author 	= {Arnold, D. N. and Brezzi, F. and Cockburn, B. and Marini, L. D.},
	title 		= {Unified analysis of discontinuous {G}alerkin methods for elliptic problems},
	journal 	= {SIAM Journal on Numerical Analysis},
	year 		= {2002},
	volume 	= {39},
	number 	= {5},
	pages 	= {1749-1779},
	doi 		= {10.1137/S0036142901384162},
}

@book{cangiani_hp_version_2017,
	title 		= {\textit{hp}-{V}ersion {D}iscontinuous {G}alerkin {M}ethods on {P}olygonal and {P}olyhedral {M}eshes},
	author 	= {Cangiani, A. and Dong, Z. and Georgoulis, E. and Houston, P.},
	year 		= {2017},
	publisher   = {Springer},
	doi		= {10.1007/978-3-319-67673-9},
}

@article{ern_discontinuous_2009,
	title      	= {A discontinuous {G}alerkin method with weighted averages for advection--diffusion equations with locally small and anisotropic diffusivity},
	author     	= {Ern, A. and Stephansen, A. F. and Zunino, P.},
	journal    	= {IMA Journal of Numerical Analysis},
	year       	= {2009},
	volume     	= {29},
	number	= {2},
	pages      	= {235-256},
	doi        	= {10.1093/imanum/drm050},
}

@article{webb_extensions_2018, 
    title       = {Extensions of {G}ronwall's inequality with quadratic growth terms and applications},
    author      = {Webb, J.R.L.},
    journal     = {Journal of Qualitative Theory of Differential Equations},
    year        = {2018},
    volume      = {61},
    pages       = {1--12},
    doi         = {10.14232/ejqtde.2018.1.61},

}

\end{document}